     \numberwithin{equation}{section}
     \newcommand{\beq}{\begin{equation}}
     \newcommand{\eeq}{\end{equation}}
     \newcommand{\beqs}{\begin{eqnarray*}}
     \newcommand{\eeqs}{\end{eqnarray*}}
     \newcommand{\beqn}{\begin{eqnarray}}
     \newcommand{\eeqn}{\end{eqnarray}}
     \newcommand{\beqa}{\begin{array}}
     \newcommand{\eeqa}{\end{array}}
     \def\lra{\longrightarrow}
     \def\bc{\begin{center}}
     \def\ec{\end{center}}
     \def\begeq{\begin{equation}}
     \def\endeq{\end{equation}}
     \def\and{\quad{\rm and}\quad}
     \let\lra=\longrightarrow
     \def\mapright\#1{\, \smash{\mathop{\lra}\limits^{\#1}}\, }
     \newtheorem{prop}{Proposition}[section]
     \newtheorem{theo}[prop]{Theorem}
     \newtheorem{lem}[prop]{Lemma}
     \newtheorem{claim}[prop]{Claim}
     \newtheorem{cor}[prop]{Corollary}
     \newtheorem{rem}[prop]{Remark}
     \newtheorem{defi}[prop]{Definition}
     \newtheorem{conj}[prop]{Conjecture}
\begin{document}

      \title{ Rigidity of the Bryant Ricci soliton }

    \author{Ziyi  $\text{Zhao}^{\dag}$ and Xiaohua $\text{Zhu}^{\ddag}$}

\address{BICMR and SMS, Peking
University, Beijing 100871, China.}
\email{ 1901110027@pku.edu.cn\\\ xhzhu@math.pku.edu.cn}

\thanks {$\ddag$ partially supported  by National Key R\&D Program of China  2020YFA0712800 and  NSFC 12271009.}
\subjclass[2000]{Primary: 53E20; Secondary: 53C20,  53C25, 58J05}

\keywords{Bryant  Ricci soliton,   Ricci flow,  curvature pinching,  positive isotropic curvature}

     \begin{abstract}We introduce a new curvature-pinching condition, which is weaker than  the positive sectional curvature or  PIC1,  and then we  prove several rigidity results for rotationally symmetric  solutions of steady Ricci solitons,  i.e.,  the Bryant  Ricci solitons.
   \end{abstract}

      \date{}

   \maketitle

   \tableofcontents

   \setcounter{section}{-1}

 \section{Introduction}

In \cite{Bre1}, Brendle solved the following Perelman's conjecture \cite{P}:

 \textit{Any $3$-dimensional (complete)  non-flat $\kappa$-noncollapsed   steady gradient soliton is rotationally.  Namely, it is isometric to the Bryant  Ricci soliton up to scaling. }

 For  general  dimensional   steady gradient solitons,   Brendle also  proved   \cite{Bre2}

 \begin{theo}\label{brendle-theorem}  Any  steady gradient Ricci soliton  $(M^n,g)$   $(n\geq3)$   with positive sectional curvature must be  isometric to the   Bryant  Ricci soliton up to scaling  if it   is asymptotically cylindrical.

\end{theo}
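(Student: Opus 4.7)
The plan is to follow Brendle's strategy for the three-dimensional case: construct approximate Killing vector fields on the cylindrical end, extend them back into the interior by solving an elliptic system, and deduce from the resulting $\mathfrak{so}(n)$-action that $(M,g)$ is rotationally symmetric. First I would use the asymptotically cylindrical hypothesis to pass to a rescaled limit at infinity. After rescaling by the potential $f$, the geometry near infinity converges smoothly to the round cylinder $\mathbb{R}\times S^{n-1}$, whose isometry group contains $SO(n)$. Choosing an orthonormal basis of the rotational Killing fields on the limit cylinder and pulling them back via the asymptotic identification yields, on each large level set $\{f=-r\}$, a family $\{U^{(a)}\}_{a=1}^{n(n-1)/2}$ of approximate Killing fields.

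Next, each $U^{(a)}$ would be extended globally by solving the Lichnerowicz-type equation
\[
\Delta_f V + 2\,\mathrm{Rm}(V,\,\cdot\,) = 0,
\]
where $\Delta_f = \Delta - \nabla_{\nabla f}$ is the drift Laplacian. This equation is satisfied by every Killing field on a steady gradient soliton, as a standard consequence of $\mathrm{Ric}+\nabla^2 f=0$. Posing $V^{(a)} = U^{(a)}$ as boundary data on $\{f=-r\}$ and sending $r\to\infty$ produces, through a Fredholm/barrier argument adapted to the weighted setting, a solution $V$ defined on all of $M$ whose restriction to each end is asymptotic to the corresponding rotation. The crux is then to show that $h := \mathcal{L}_V g \equiv 0$. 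A direct computation using $\mathrm{Ric}+\nabla^2 f=0$ shows that $h$ is divergence-free and trace-free, and satisfies its own Lichnerowicz-type equation. One then looks for a positive weight $w$, built from the curvature and $|\nabla f|$, such that the quantity $Q := |h|^2/w$ obeys a differential inequality of the form $\Delta_f Q \geq 0$ modulo terms of favorable sign at an interior maximum. Positive sectional curvature is used in two places: it ensures that the curvature term in the Lichnerowicz Laplacian has the correct sign, and it keeps $w$ bounded away from zero. Since $Q\to 0$ at infinity by the asymptotically cylindrical assumption, the maximum principle forces $h\equiv 0$, so $V$ is a genuine Killing field.

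Finally, the $n(n-1)/2$ Killing fields obtained in this way close under Lie bracket on the cylindrical end to give an $\mathfrak{so}(n)$-structure; unique continuation of Killing fields propagates this algebraic identity throughout $M$ and yields a global isometric $SO(n)$-action. Positive sectional curvature forces the principal orbits to be codimension one, so $(M,g)$ is rotationally symmetric, and the classical ODE classification of rotationally symmetric steady gradient Ricci solitons identifies $(M,g)$ with the Bryant soliton up to scaling. The main obstacle in this outline is the maximum-principle step: the weight $w$ must be delicately chosen so that the Bochner-type term from the Lichnerowicz Laplacian dominates the error terms coming from the curvature and drift of the soliton, while $Q$ still decays to zero at infinity. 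Positivity of the sectional curvature, and not merely of Ricci curvature, appears to be needed precisely to supply enough room in this choice — which is consistent with the paper's stated goal of weakening exactly this hypothesis in subsequent sections.
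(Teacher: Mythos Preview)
Note first that this theorem is Brendle's result, cited from \cite{Bre2}; the present paper does not reprove it but adapts Brendle's method to establish the more general Theorem~\ref{main-theorem}. Your outline captures the overall architecture --- approximate Killing fields from the cylindrical end, an elliptic extension, a maximum-principle step to kill the Lie derivative, and the $\mathfrak{so}(n)$ conclusion --- but several of the technical steps are misstated in ways that matter.

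The equation actually solved for the vector field is $\Delta W + D_X W = 0$ (equation~\eqref{vector-equ} here), not one involving a curvature term $\mathrm{Rm}(V,\cdot)$; the curvature enters only at the level of the symmetric $2$-tensor $h = \mathscr{L}_W g$, which then satisfies $\Delta_L h + \mathscr{L}_X h = 0$. There is no claim or need that $h$ be divergence-free or trace-free. More significantly, Brendle's vanishing argument is not the scalar maximum principle you sketch for a quotient $|h|^2/w$: it is a \emph{tensor} maximum principle applied to $\theta f^{-l} g - h$ (reproduced in Lemma~\ref{MP-tensor} of this paper), and positive sectional curvature is invoked precisely to make the reaction term $\sum_{i,k} R(e_1,e_i,e_1,e_k)\bigl(\theta f^{-l}g(e_i,e_k) - h(e_i,e_k)\bigr)$ nonnegative at a maximum point. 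The conclusion is not $h\equiv 0$ directly, but $h = \lambda\,\mathrm{Ric}(g)$ for some constant $\lambda$ (since $\mathrm{Ric}(g)$ is itself a solution of the Lichnerowicz equation); one then passes to $Y = W - \tfrac{\lambda}{2}X$ to obtain the genuine Killing field. Finally, your sign convention on the level sets is inverted: in this normalization $f$ grows linearly to $+\infty$, so the level sets are $\{f = r\}$ with $r\to\infty$.
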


Since any $3$-dimensional  non-flat steady gradient soliton has   always positive  sectional curvature  \cite{Chen} and  it   is also asymptotically cylindrical  by the $\kappa$-noncollapsed property  \cite{P},   Theorem \ref{brendle-theorem}    in particular  also confirms  the Perelman's conjecture.

 The asymptotically cylindrical  behavior   is an asymptotic  property  for a  Riemannian manifold  $(M, g)$ \cite{Bre2}   (also see  Definition \ref{def1.2}).   In this paper, we purpose to weaken the global  condition of  positive  sectional curvature to generalize Theorem \ref{brendle-theorem}.

 Let   ${\rm Rm}_g$ be the  curvature operator  tensor and $R=R(x)$ the scalar  curvature of $(M, g)$.   We introduce   the following curvature-pinching condition,
 \begin{align}\label{pinching-condition}
  2P<R,
  \end{align}
  where the function  $P=P(x)$ is  defined on $M$ by
  \begin{align}\label{P-defi}
      P(\cdot)=\sup_h\frac{{\rm Rm}_g(h,h)}{|h|^2}=\sup_h\frac{{\rm R}_{ijkl}(\cdot)h^{ik}h^{jl}}{h_{ik}h^{ik}}, ~\forall~{\rm symmetric} ~{\rm 2-tensor}~ h\neq 0.
  \end{align}
The operator  ${\rm Rm}_g(h, \cdot)$  on the space of  symmetric $2$-tensors $h$  arises from the  Lichnerowicz Laplacian  \cite{Bes}  (also see (\ref{LL})).  When the operator is restricted to the traceless $h$,  its negative  is  usually  called   the second  kind curvature operator (cf. \cite{Nis, CGT,  Li}).

The following  is our main result in this paper.

  \begin{theo}\label{main-theorem}  Let  $(M^n,g)$   $(n\geq4)$  be a steady gradient Ricci soliton  which satisfies
$(\ref{pinching-condition})$.  Then  $(M,g)$ must be isometric to the   Bryant Ricci soliton   up to scaling if it   is asymptotically cylindrical.

  \end{theo}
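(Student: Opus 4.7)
The plan is to reduce Theorem \ref{main-theorem} to Brendle's Theorem \ref{brendle-theorem} by showing that the pointwise pinching $2P<R$, combined with the steady soliton equations and the asymptotically cylindrical hypothesis, forces the sectional curvature of $(M,g)$ to be strictly positive. Once positive sectional curvature is established globally, Theorem \ref{brendle-theorem} applies directly.

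First, I would set up the analytic framework specific to a steady gradient soliton. The soliton equation $R_{ij}+\nabla_i\nabla_j f=0$ implies $R+\Delta f=0$ and $R+|\nabla f|^2\equiv\text{const}$. Introducing the drift Laplacian $\Delta_f:=\Delta-\langle\nabla f,\nabla\cdot\rangle$, one has Hamilton-type identities $\Delta_f R=-2|\mathrm{Ric}|^2$ and $\Delta_f R_{ij}=-2R_{ikjl}R^{kl}$, and more generally the curvature tensor satisfies a quadratic drift equation $\Delta_f\mathrm{Rm}=Q(\mathrm{Rm})$. In the latter, precisely the operator ${\rm Rm}_g(h,\cdot)$ appearing in the definition (\ref{P-defi}) of $P$ enters through the Lichnerowicz Laplacian on symmetric $2$-tensors, which is the essential linkage between the pinching condition and the soliton equations.

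Next, I would derive a drift-elliptic differential inequality for the pinching defect $R-2P$. At any point, let $h$ be a symmetric $2$-tensor realizing the supremum in (\ref{P-defi}); using the drift equation for ${\rm Rm}$, the Lichnerowicz structure of the second-kind curvature operator, and $\Delta_f R=-2|\mathrm{Ric}|^2$, I would aim to show that $R-2P$ satisfies an inequality of the form $\Delta_f(R-2P)\le C_1(R-2P)+C_2$ with the reaction term controllable under the hypothesis $2P<R$. On the steady soliton $(M,g)$, which is a complete noncompact manifold, the Omori--Yau / strong maximum principle for $\Delta_f$ applied to $R-2P$ then yields a strict pointwise inequality $R-2P>0$ (as opposed to $\ge 0$), together with rigidity at equality points that is inconsistent with the asymptotically cylindrical end, where a direct computation on the round cylinder $\RR\times S^{n-1}$ gives $R=(n-1)(n-2)$ and $P=1$, so $R-2P=(n-1)(n-2)-2>0$ for $n\ge 4$.

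Finally, I would upgrade the pinching to positive sectional curvature. By Chen's theorem, $R\ge 0$ on a steady soliton, and the strong maximum principle applied to $\Delta_f R=-2|\mathrm{Ric}|^2$ gives $R>0$ unless $(M,g)$ is Ricci-flat (the latter being ruled out by the asymptotic cylindrical structure). For the lowest sectional curvature $K_{\min}$, I would apply a Hamilton-type tensor maximum principle to $\mathrm{Rm}$ under the drift heat flow, using the pinching $2P<R$ to control the quadratic reaction terms; since $K_{ij}\le P<R/2$ follows from (\ref{pinching-condition}) by testing against $h=e_i\otimes e_i+e_j\otimes e_j$, the reaction term has the sign needed to propagate positivity of $K_{\min}$ from the asymptotic end (where $K>0$ along spherical $2$-planes) inward. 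Once strict positivity of every sectional curvature is established on all of $M$, Theorem \ref{brendle-theorem} concludes that $(M,g)$ is the Bryant soliton up to scaling.

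The main obstacle I anticipate is the propagation step: the pinching $2P<R$ is genuinely weaker than positive sectional curvature or PIC1, and it only bounds sectional curvatures from above by $R/2$, not from below. Ruling out an interior zero of $K_{\min}$ therefore requires a tensor maximum principle adapted to the second-kind curvature operator, together with a careful analysis at infinity to ensure the maximum principle applies on the noncompact manifold and that any limiting splitting is incompatible with the asymptotically cylindrical hypothesis.
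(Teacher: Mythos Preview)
Your proposal has a genuine gap: the reduction to Brendle's Theorem \ref{brendle-theorem} via global positive sectional curvature does not go through, because the pinching $2P<R$ simply does not force all sectional curvatures to be positive. Testing (\ref{P-defi}) with $h=e_1\otimes e_1-e_2\otimes e_2$ gives ${\rm Rm}(h,h)/|h|^2=-K_{12}$, so a slightly negative $K_{12}$ only contributes a small positive amount to $P$ and is perfectly compatible with $2P<R$ when the remaining curvatures are large. Indeed, the paper introduces (\ref{pinching-condition}) precisely as a condition strictly weaker than positive sectional curvature (cf.\ Proposition \ref{P-condition} and Remark \ref{remark-brendle-2}), and the whole point of the argument is to cope with the possibility that sectional curvature is \emph{not} positive on some compact set. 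The specific mechanisms you propose also do not work as stated: $P$ is a pointwise supremum and is not smooth in general, so a drift-elliptic inequality for $R-2P$ is not available without substantial additional work; and ``propagating $K_{\min}>0$ from the asymptotic end inward'' is not a maximum principle argument---maximum principles on noncompact manifolds control the interior from infinity only for \emph{sub}solutions reaching a maximum, not for positivity of a quantity that is positive only near infinity.

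The paper's proof takes a completely different route and never establishes positive sectional curvature. It follows Brendle's scheme of constructing $\frac{n(n-1)}{2}$ Killing vector fields, but replaces Brendle's elliptic maximum-principle step (which needed positive sectional curvature to kill the solutions $h_a$ of (\ref{L-equ})) by a parabolic heat-kernel argument. The key new ingredients are: (i) a refinement of the asymptotic estimates so that the Lie derivatives $\mathscr{L}_{Y_a}g$ decay at an \emph{arbitrary} polynomial rate $O(r^{-l})$ outside a compact set (Theorem \ref{VF-decay}); (ii) a higher-dimensional Anderson--Chow type pinching inequality (Lemma \ref{h-H-pinching}) showing $|h|^2/H^2$ is a subsolution whenever $H$ solves $\partial_t H=\Delta H+2PH$; and (iii) the observation that $\int_M G\,dv$ is nonincreasing precisely because $2P-R<0$ (Lemma \ref{HK-vanish-local}), which combined with a bounded supersolution (Lemma \ref{supersolution}) forces $h_a\equiv 0$. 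This is where the condition (\ref{pinching-condition}) is actually used, and it enters through the sign of $2P-R$ in the evolution of the heat-kernel mass, not through any lower bound on sectional curvature.
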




We will  discuss  a class of $(M^n, g)$  ($n\ge 4$)  called  with  PIC0, which  satisfy (\ref{pinching-condition})  in Section 1 (cf. Proposition  \ref{P-condition}).   This class consists of
all  $(M^n, g)$ with  positive sectional curvature or PIC1.
 Thus,    Theorem \ref{main-theorem} is  in particular a  generalization of Theorem \ref{brendle-theorem} for  the steady gradient solitons with   positive sectional curvature.

It seems that  PIC0 is very closely related to PIC (cf. (\ref{PIC0})).   The notion of PIC  was first introduced by Micallef and Moore \cite{MM} in their work on the index   of minimal two-spheres. The PIC condition is preserved under the Ricci flow, proved by Hamilton \cite{Ham} for  $4$-dimension,  and Nguyen \cite{Ng} and Brendle-Schoen \cite{Bre3} independently for  higher  dimensions.
There are   important classification results for   differential structures of compact manifolds  with  PIC  via surgery method,  we refer the reader to    \cite{Ham, CZ1, CZ2, Bre4}, etc.

As an application of Theorem \ref{main-theorem},  we   prove  the following rigidity of the  Bryant soliton  with respect to the linear decay curvature.

 \begin{cor}\label{Curv-decay} Let $(M^n,g)$  $(n\ge 4)$ be a  $\kappa$-noncollapsed steady gradient Ricci soliton with  positive Ricci curvature.   Suppose that  there exists $C_0$ and $r_0$ such that for any  $x\in M$ with  $\rho(x)\geq r_0$ it holds
 \begin{align}\label{decay-condition} {\rm  Rm}_g(x)\ge 0~{\rm and}~ R(x)\leq \frac{C_0}{\rho(x)}.
 \end{align}
 Then $(M,g)$ is    isometric to the Bryant  Ricci soliton if it  satisfies $(\ref{pinching-condition})$.  In particular,  any
  $n$-dimensional    $\kappa$-noncollapsed steady gradient Ricci soliton   with PIC0 must be  isometric to the Bryant Ricci  soliton up to scaling  under the curvature  decay  $(\ref{decay-condition})$.

 \end{cor}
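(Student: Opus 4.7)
The plan is to verify that $(M,g)$ is asymptotically cylindrical and then invoke Theorem~\ref{main-theorem}. Hamilton's identity for steady gradient solitons gives $R + |\nabla f|^2 \equiv c$, and the positive Ricci hypothesis forces $c > 0$ (since $(M,g)$ is non-flat). Combined with the decay $R(x) \le C_0/\rho(x)$, this yields $|\nabla f|(x) \to \sqrt{c}$ as $\rho(x) \to \infty$; after a rescaling we normalize $c = 1$. The pinching~(\ref{pinching-condition}) is assumed directly in the main claim, and for the ``in particular'' statement it follows globally from PIC0 via the cited Proposition~\ref{P-condition}.

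For the blowdown, fix any sequence $p_k \in M$ with $\rho(p_k) \to \infty$ and rescale by $\lambda_k := R(p_k) \to 0$, setting $g_k := \lambda_k g$. The decay condition controls $|{\rm Rm}_{g_k}|$ uniformly on every fixed $g_k$-ball around $p_k$ (points there have $g$-distance $O(\lambda_k^{-1/2}) \ll \rho(p_k)$, so $R = O(\lambda_k)$ there and hence $|{\rm Rm}_{g_k}| = O(1)$); Shi's local derivative estimates then give all higher derivative bounds. The $\kappa$-noncollapsing lets Hamilton's compactness theorem produce a smooth pointed limit $(M_\infty, g_\infty, p_\infty)$ with bounded nonnegative curvature operator, $R_{g_\infty}(p_\infty) = 1$, and still satisfying the scale-invariant pinching $2P \le R$.

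To split off a line, I would examine the vector field $X := \nabla f/|\nabla f|$. Using $\nabla^2 f = {\rm Ric}$, the contracted Bianchi identity $\nabla R = 2\,{\rm Ric}(\nabla f,\cdot)$, and $|\nabla f| \to 1$, a direct computation gives $|\nabla_X X|_g = O(R) = O(1/\rho)$. Under the rescaling, the geodesic curvature of the integral curve of $X$ measured in $g_k$-arclength is therefore $O(\lambda_k^{-1/2} R) = O(\lambda_k^{1/2}) \to 0$, uniformly on every fixed $g_k$-ball. The trajectory through $p_k$ thus Cheeger--Gromov converges to a complete geodesic line in $(M_\infty, g_\infty)$, and Cheeger--Gromoll splitting yields $(M_\infty, g_\infty) \cong \mathbb{R} \times (N, h)$. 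The $\kappa$-noncollapsing forces $N$ to be closed of dimension $n-1$, inheriting nonnegative curvature operator and the pinching $2P_N \le R_N$. Standard rigidity for the second-kind curvature operator (in the spirit of the works cited as~\cite{Nis, CGT, Li}) then identifies $(N,h)$ as a round sphere $S^{n-1}$. This verifies asymptotic cylindricity, and Theorem~\ref{main-theorem} delivers the rigidity.

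The main obstacle is the interplay between the line-splitting and the rigidity step for $N$: one must ensure that the rescaled integral curves of $X$ persist to a genuine bi-infinite line in the limit (this is where the precisely linear decay rate $R \sim 1/\rho$, rather than a weaker one, is crucially used), and that the passage of~(\ref{pinching-condition}) to $N$ together with ${\rm Rm}_{N} \ge 0$ is strong enough to pin the cross-section down as the round sphere with no quotients or collapsing.
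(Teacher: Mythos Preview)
The paper's proof is a one-line citation: under the hypotheses of Corollary~\ref{Curv-decay}, asymptotic cylindricity is taken directly from \cite[Theorem~1.3]{DZ1}, and then Theorem~\ref{main-theorem} applies; Proposition~\ref{P-condition} handles the PIC0 clause. You instead attempt to reprove the Deng--Zhu black box, and your sketch has the right architecture (blow down, split off a line, identify the cross-section), but the decisive step---showing the cross-section $(N,h)$ is the \emph{round} $S^{n-1}$---is not justified.

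Your appeal to ``standard rigidity for the second-kind curvature operator (in the spirit of \cite{Nis, CGT, Li})'' is the gap. The pinching $2P\le R$ concerns the curvature action on \emph{all} symmetric $2$-tensors, whereas the second-kind operator in those references is the restriction to the \emph{traceless} ones; they are different hypotheses. Moreover, those rigidity theorems need a strict sign, while in your limit you only get the non-strict inequality $2P_N\le R_N$, and nonnegative curvature operator alone certainly does not force roundness (e.g.\ $S^p\times S^q$). The actual argument in \cite{DZ1} does not go through (\ref{pinching-condition}) at all: one shows the blow-down is not a static metric but a \emph{Ricci flow} limit (as Definition~\ref{def1.2}(ii) requires), that the level sets of $f$ are topological spheres with diameter $O(\sqrt{r})$, and that the compact cross-section, as an $(n-1)$-dimensional ancient/shrinking solution with nonnegative curvature operator, must be the round shrinking sphere---this uses B\"ohm--Wilking/Brendle type classification, not Nishikawa-type rigidity. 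Two smaller issues: $\kappa$-noncollapsing gives a volume \emph{lower} bound and does not by itself force $N$ to be closed (compactness comes from the diameter estimate on level sets), and you have not checked that the limit is the shrinking-cylinder \emph{flow} (\ref{split-flow}) rather than merely a static product.
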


The above corollary  generalizes a result of Deng-Zhu \cite{DZ1} instead of the  positive sectional curvature by  the pinching-curvature (\ref{pinching-condition}) or PIC0.
We note that the global condition  of positive Ricci curvature and  the condition (\ref{decay-condition}) near the infinity are both necessary according to   examples of  $\kappa$-noncollapsed steady gradient Ricci soliton  constructed in  \cite{DW, App},  which are not isometric to the Bryant Ricci  soliton   up to scaling.  But we guess that  the positive Ricci curvature condition can be removed under (\ref{pinching-condition}). We also note  that just assuming the PIC0   is not sufficient to prove  the rigidity of  steady gradient Ricci solitons, thanks for the new examples of Lai about $\kappa$-noncollapsed steady gradient Ricci solitons of   dimension $n\geq 4$  with the positive curvature  operator which are not rotationally symmetric \cite{Yi1}.



 \begin{defi}\label{Symptotic-soliton}  $(M^n,g)$ is called $C^{2, \tau}$-asymptotic to the $n$-dimensional  Bryant  Ricci soliton $(\mathbb R^n, g_0)$ if
there are a ball $B_R(0)\subset  \mathbb R^n$ and a diffeomorphism
 $$F:    (\mathbb R^n\setminus B_R, g_0)   \rightarrow
 ( M, g)  $$
    such that $\hat{g}=F^*g$ satisfies
 \begin{align}\label{asymtotic-behavior}
     |\hat{g}-g_0|_{g_0}(x)\leq O(\rho(x)^{-\tau-\frac{1}{2}}), \quad  |\nabla^k \hat{g}|_{g_0}\leq O(\rho(x)^{-\tau-k}) ,
 \end{align}
 where $k=1, 2, \tau\in(0,1)$.
 \end{defi}

By Corollary  \ref{Curv-decay},    we can  get  the  following rigidity theorem for the steady gradient Ricci solitons.

 \begin{cor}\label{asymtotic-behavior-rigidity}  Any   steady gradient Ricci soliton  with  the curvature condition   $(\ref{pinching-condition})$,  which is  $C^{2, \tau}$-asymptotic to the $n$-dimensional Bryant  Ricci soliton  $(\mathbb R^n, g_0)$,   must be  isometric to  $(\mathbb R^n, g_0)$.

 \end{cor}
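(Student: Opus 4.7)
The plan is to reduce to Corollary \ref{Curv-decay}. Since the pinching $(\ref{pinching-condition})$ is part of the hypothesis, the remaining task is to extract the structural assumptions of Corollary \ref{Curv-decay} — positive Ricci curvature, $\kappa$-noncollapsedness, and the asymptotic curvature bounds $(\ref{decay-condition})$ — from the $C^{2,\tau}$-asymptotic condition $(\ref{asymtotic-behavior})$ on $(M,g)$.

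The starting point is that the model $(\mathbb R^n, g_0)$ itself satisfies every quantitative condition appearing in Corollary \ref{Curv-decay}: its curvature operator is positive with eigenvalues $\gtrsim 1/\rho(x)$ on the end, its scalar curvature satisfies $R_{g_0}(x)\sim 1/\rho(x)$, and it is $\kappa$-noncollapsed. The decay estimate $(\ref{asymtotic-behavior})$ for $\hat g = F^\ast g$ implies $|{\rm Rm}_{\hat g} - {\rm Rm}_{g_0}|_{g_0}(x) = O(\rho(x)^{-\tau-2})$, which, since $\tau>0$, is of strictly lower order than the $\rho^{-1}$-scale curvature of the model. Hence outside a sufficiently large ball one has ${\rm Rm}_g \ge 0$, ${\rm Ric}_g>0$, and $R(x)\le C_0/\rho(x)$, verifying $(\ref{decay-condition})$; and the $\kappa$-noncollapsedness of $(M,g)$ is inherited on the end from $g_0$ (with a slightly smaller constant, because of the $O(\rho^{-\tau-1/2})$ metric distortion) and is automatic on the compact core, so $(M,g)$ is $\kappa'$-noncollapsed globally.

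The remaining, and I expect most delicate, step is promoting positivity of Ricci from infinity to the whole manifold. For this I would use the steady soliton identity ${\rm Ric} = \nabla^2 f$ together with the Lichnerowicz-type equation satisfied by ${\rm Ric}$ on a gradient Ricci soliton, and apply Hamilton's strong maximum principle for the Ricci tensor along the self-similar Ricci flow generated by $f$ to propagate nonnegativity of Ricci inward from the asymptotic region. The strict pinching $(\ref{pinching-condition})$ then rules out a null eigendirection of ${\rm Ric}$ — any such would force a local splitting incompatible with the strict inequality at infinity — and gives ${\rm Ric}>0$ globally. With all hypotheses of Corollary \ref{Curv-decay} verified, the conclusion $(M,g)\cong(\mathbb R^n, g_0)$ follows.

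If this maximum-principle step proves awkward, the cleanest alternative is to bypass Corollary \ref{Curv-decay} and apply Theorem \ref{main-theorem} directly, which needs only the pinching $(\ref{pinching-condition})$ and asymptotic cylindricality. For this it suffices to check that $C^{2,\tau}$-asymptotic to the Bryant soliton implies asymptotically cylindrical in the sense of Definition \ref{def1.2}: for any sequence $y_k\in M$ with $\rho(y_k)\to\infty$, a direct scaling computation shows that the perturbation $R(y_k)(\hat g-g_0)$ has $C^2$-norm in the rescaled metric $R(y_k) g_0$ of order $\rho(y_k)^{-\tau-1/2}$ on unit balls about $y_k$, and thus tends to zero. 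Consequently $(M, R(y_k) g, y_k)$ shares its smooth pointed Cheeger--Gromov limit with $(\mathbb R^n, R(y_k) g_0, F^{-1}(y_k))$, which is the standard round cylinder by Bryant's classical asymptotic analysis; this is precisely the asymptotic cylindricality required by Theorem \ref{main-theorem}, and the rigidity $(M,g)\cong(\mathbb R^n,g_0)$ follows.
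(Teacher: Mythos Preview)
Your primary route through Corollary \ref{Curv-decay} has a genuine gap at the global positivity of Ricci. The argument you sketch --- propagate nonnegativity of $\operatorname{Ric}$ inward via Hamilton's strong maximum principle, then use the pinching $(\ref{pinching-condition})$ to exclude a null eigendirection --- does not go through as stated. At an interior minimum of the smallest Ricci eigenvalue, the tensor maximum principle requires a sign on the reaction term $2R_{i j k l}\operatorname{Ric}^{jl}$ evaluated on the null eigenvector; this sign comes for free under positive sectional curvature, but the condition $(\ref{pinching-condition})$ only bounds the operator norm of $h\mapsto R_{ijkl}h^{jl}$ on symmetric $2$-tensors by $R/2$ and gives no control on that particular contraction. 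Your ``local splitting'' heuristic is likewise unsubstantiated: a null Ricci eigendirection on a compact set is not incompatible with $(\ref{pinching-condition})$ in any obvious way. The paper does \emph{not} attempt this step and does not pass through Corollary \ref{Curv-decay}, despite the phrasing in the introduction.

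Your alternative route is exactly what the paper does: verify asymptotic cylindricality and invoke Theorem \ref{main-theorem} directly. The paper's verification is slightly different from your scaling argument --- it first extracts from $(\ref{asymtotic-behavior})$ that ${\rm Rm}_g>0$ outside a compact set, that $R$ has exact linear decay, and that $(M,g)$ is $\kappa$-noncollapsed, and then feeds these into the proof of \cite[Theorem 1.4]{DZ1}, checking in addition that the level sets of $f$ are topological $(n{-}1)$-spheres with $\operatorname{Diam}(\Sigma_r,g)\le C\sqrt{r}$ (both immediate from $(\ref{asymtotic-behavior})$). Your direct comparison of rescaled flows with those of the Bryant model is a legitimate shortcut, since both $g$ and $g_0$ have bounded curvature (hence uniform Shi estimates) and the $C^2$-closeness in $(\ref{asymtotic-behavior})$ forces the Cheeger--Gromov limits to coincide; but note that you should say a word about why $C^{2,\tau}$-closeness suffices for the $C^\infty$ convergence required in Definition \ref{def1.2}, which is where the soliton structure and Shi's estimates enter.
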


 In general, there is no rigidity for the steady soliton under the metric  $C^{l, \tau}$-asymptotic behavior  if there is no curvature restriction.  For examples,   Conlon and Deruelle  have  constructed  a complete steady gradient K\"ahler-Ricci soliton in every K\"ahler class of a crepant resolution over  a Calabi-Yau cone,  which unique up to the flow of the soliton vector field converges at a polynomial rate to the  Cao's  steady gradient K\"ahler-Ricci soliton on the cone  \cite{CoD}.

 It is interesting to mention that a similar version of  Corollary \ref{asymtotic-behavior-rigidity}  has been established for the shrinking gradient Ricci soliton \cite{KW} and the expending  gradient Ricci soliton \cite{De1,De2}, respectively.   We hope that the condition  (\ref{pinching-condition}) can be weakened by the positive Ricci curvature.    Actually, we have the following conjecture.

 \begin{conj}  Any   steady gradient Ricci soliton  with  positive Ricci curvature,  which is  $C^{2, \tau}$-asymptotic to the $n$-dimensional  Bryant Ricci  soliton  $(\mathbb R^n, g_0)$,   must be  isometric to  $(\mathbb R^n, g_0)$.

 \end{conj}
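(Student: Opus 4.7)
The plan is to reduce the statement to Corollary \ref{Curv-decay} by verifying its three hypotheses from the $C^{2,\tau}$-asymptotic assumption: (a) $\kappa$-noncollapsing, (b) positive Ricci curvature on all of $M$, and (c) the curvature decay $(\ref{decay-condition})$ on $M\setminus B_{r_0}$ for some $r_0$. The pinching $(\ref{pinching-condition})$ is assumed throughout. Once these are in hand, Corollary \ref{Curv-decay} yields that $(M,g)$ is isometric to the Bryant soliton up to scaling; since $g$ is specifically $C^{2,\tau}$-asymptotic to the fixed model $(\mathbb R^n, g_0)$ with the given decay rates, the scaling constant must equal $1$ and we recover $(M,g)\cong(\mathbb R^n, g_0)$.

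Steps (a) and (c) are routine consequences of the asymptotic closeness. The Bryant soliton $(\mathbb R^n, g_0)$ is $\kappa_0$-noncollapsed, has positive curvature operator, and satisfies $R_{g_0}(y)\sim c_0/|y|$ at infinity. The Lipschitz closeness implied by $|\hat g - g_0|_{g_0} = O(\rho^{-\tau-1/2})$ transfers noncollapsing from the end of $(\mathbb R^n, g_0)$ to the end of $(M,g)$, while the compact core is noncollapsed automatically. The second-derivative decay $|\nabla^2\hat g|_{g_0}=O(\rho^{-\tau-2})$ gives
\begin{equation*}
|{\rm Rm}_{\hat g} - {\rm Rm}_{g_0}|_{g_0}(x) = O(\rho(x)^{-\tau-2}),
\end{equation*}
which is of strictly lower order than $R_{g_0}\sim c_0/\rho$ since $\tau>0$. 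Consequently ${\rm Rm}_g\ge 0$ and $R_g\le C_0/\rho$ hold on $M\setminus B_{r_0}$ for $r_0$ sufficiently large, verifying $(\ref{decay-condition})$.

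The main obstacle is step (b): upgrading the positivity of Ricci near infinity (immediate from step (c) together with $\mathrm{Ric}_{g_0}>0$) to global positivity of $\mathrm{Ric}_g$ on all of $M$. Here one exploits the soliton equation $\mathrm{Ric}+\nabla^2 f = 0$, which places $\mathrm{Ric}$ into an elliptic Lichnerowicz-type system, schematically of the form $\Delta_f R_{ij} + 2 R_{ikjl} R^{kl} - 2 R_i^k R_{kj}=0$. The pinching $(\ref{pinching-condition})$ is precisely calibrated to control the zeroth-order curvature reaction term ${\rm Rm}_g(h,h)$ when $h=\mathrm{Ric}$, allowing Hamilton's strong maximum principle for tensorial systems to propagate positivity: the locus $\{\mathrm{Ric}>0\}$ is both open and (after combining the null-eigenspace analysis with parallel transport invariance) closed in $M$, and hence must equal $M$ since it is non-empty at infinity. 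With (a), (b), (c) verified, Corollary \ref{Curv-decay} applies and completes the proof.
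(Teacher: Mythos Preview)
The statement you are attempting to prove is labeled a \emph{Conjecture} in the paper; it is explicitly left open and there is no proof in the paper to compare against. The entire point of the conjecture is to ask whether the pinching hypothesis $(\ref{pinching-condition})$ in Corollary~\ref{asymtotic-behavior-rigidity} can be replaced by the weaker assumption of positive Ricci curvature. Your sentence ``The pinching $(\ref{pinching-condition})$ is assumed throughout'' therefore assumes away the actual content of the problem: with $(\ref{pinching-condition})$ in hand, what you are proving is precisely Corollary~\ref{asymtotic-behavior-rigidity}, which the paper already establishes.

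There is also a confusion in your step~(b). Positive Ricci curvature on all of $M$ is a \emph{hypothesis} of the conjecture, not a conclusion to be derived; there is nothing to ``upgrade'' from the end to the interior. The genuine obstacle, and the reason this remains a conjecture, is that Corollary~\ref{Curv-decay} (and ultimately Theorem~\ref{main-theorem}) requires $(\ref{pinching-condition})$ in an essential way: it is used in Section~5 to build the positive supersolution of the heat-type equation $(\ref{H-equation})$ and to run the heat-kernel argument in Lemma~\ref{HK-vanish-local} and Proposition~\ref{h-vanish}. Positive Ricci curvature alone does not give the inequality $2P<R$ needed there, and your sketch of a tensor maximum-principle argument for Ricci positivity neither addresses this gap nor is itself needed. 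In short, you have reproved Corollary~\ref{asymtotic-behavior-rigidity} rather than the conjecture.
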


 To prove Theorem \ref{main-theorem},  we will follow  an   approach of Brendle to construct  $\frac{n(n-1)}{2}$ Killing vector fields (VFs) on the steady Ricci soliton $(M^n,g)$ as in the proof of  Theorem \ref{brendle-theorem}  \cite{Bre2},
where  the positivity of  sectional curvature plays a crucial role  in his decay estimate for  the solutions   of  Lichnerowicz-type  equation via the maximum  principle  (also see Remark \ref{remark-brendle-2}).
 However,  in our case with the curvature  condition (\ref{pinching-condition}),  we  use the heat flow method to do the decay estimate  by  solving  the parabolic  version of  Lichnerowicz-type  equation (cf. Section 5).  Thus  we  need to show  that the   Lie derivatives  of  metric $g$ associated to the constructed   $\frac{n(n-1)}{2}$ approximate Killing VFs    have   fast  polynomial decay  (cf. Section 4).

 We now outline the main steps in the proof of Theorem \ref{main-theorem}.

 Step 1 (cf. Section 2, 3). As in \cite{Bre1, Bre2},  we  construct a collection of approximate  Killing VFs $U_{a}, ~a \in \{1,...,\frac{n(n-1)}{2}\}$,  near  the infinity of  $(M, g)$. By  several improvements   of asymptotic estimates for   curvatures  (cf.  Subsection 2.2),  we show that those  $U_{a}$ satisfy    (cf. Proposition \ref{App-Killing-VF}),
\begin{align}\label{u-vector} \left|\mathscr{L}_{U_a}(g)\right|\leq O(\rho^{-\frac{1}{2}-\frac{1}{4}\delta}) ~{\rm and }~|\Delta U_{a}+D_XU_{a}|\leq O(\rho^{-1-\frac{1}{4}\delta}).
\end{align}
The above estimate allows us to   get global solutions of  the following  elliptic equation  associated to  $U_a$  on $(M, g)$,
 \begin{align}\label{vector-equ}\Delta W_a+D_X W_a=0.
 \end{align}
 The method is to
solve   the Dirichlet problem  via  perturbation and exhausting as in \cite[Section 3]{Bre2}.

 Step 2 (cf. Section 4).  We shall do the decay estimate  of  $h_a=\mathscr{L}_{W_a}g$ for solutions of
 Lichnerowicz-type equation associated to the steady Ricci soliton $(M, g)$,
 \begin{align}\label{L-equ}\Delta_L h_a+\mathscr{L}_X(h_a)=0.
 \end{align}
 We prove that there exists a constant $\lambda$ such
 that $|\tilde h_a|$,  where $\tilde h_a=h_a-\lambda \operatorname{Ric}(g)$,  decays to zero at  any  polynomial rate (cf. Theorem \ref{VF-decay}).

 Step 3 (cf. Section 5).    We prove  that the uniqueness of solutions of  (\ref{L-equ}) and
 $\tilde h_a$ is in fact zero then we will  finish the proof of Theorem \ref{main-theorem}.    Our method is to  consider the  following parabolic Lichnerowicz equation for the initial value $\tilde h_a$ constructed in Step 2,
  \begin{align}\label{para-L}\frac{\partial}{\partial t}h=\Delta_{L,g(t)}h,
  \end{align}
  where $g(t)$ is the Ricci flow solution induced by $g$ via transformations  group $\Phi^*_t$  generated by  the soliton VF.
 By modifying  the   pinching estimate  of Anderson-Chow  for the solution of (\ref{para-L})   in $3$-dimension \cite{AC} (also see \cite{Yi2}),   we introduce the following heat-type  equation for any dimension $n\ge 3$,
 \begin{align}\label{H}
     \frac{\partial}{\partial t}H-\Delta_{g(t)} H-2P(\cdot, t) H=0,
 \end{align}
where  $P(\cdot, t)=\Phi^*_t P(\cdot)$ and $P(\cdot)$  is the function defined  by  (\ref{P-defi}).
We will show that there exists a bounded positive super-solution  of  (\ref{H}) (cf. Lemma  \ref{supersolution}),  and  then use the heat kernel method  to prove that  the limit solution of
(\ref{para-L}) is just zero (cf. Proposition \ref{h-vanish}).

Theorem \ref{main-theorem},  Corollary  \ref{Curv-decay} and  Corollary \ref{asymtotic-behavior-rigidity} will be finally  proved in Section 6.

\vskip5mm

$\mathbf {Acknowledgment.}$  The authors would like to thank  Xiaodong Cao and  Xiaolong Li for valuable conversations on the operator  ${\rm Rm}_g(h, \cdot)$ and their recent works on  the Nishikawa's conjecture.

 \section{Preliminary}

A complete Riemannian metric $g$ on  $M$ is  called a gradient Ricci soliton if there exists a smooth function $f$ ( which is called a defining function)  on $M$ such that
\begin{equation}\label{Def-soliton}
R_{ij}(g)+\rho g_{ij}=\nabla_{i}\nabla_{j}f,
\end{equation}
where $\rho\in \mathbb{R}$ is a constant. The gradient Ricci soliton is called expanding,   steady and shrinking  according to the sign $\rho >,  = ,  <0$,  respectively.     These three types  of   Ricci solitons  correspond to three different blow-up solutions  of Ricci flow  \cite{H3}.

 In case of  steady Ricci solitons, we can rewrite (\ref{Def-soliton}) as
  \begin{align}\label{soliton-equation} 2\operatorname{Ric}(g)=\mathscr{L}_Xg,
  \end{align}
  where  $\mathscr{L}_X $ is the Lie operator along   the gradient VF $X = \nabla f$  generalized by $f$.
Let $\{\Phi^*_t\}_{t\in(-\infty,\infty)}$ be   a  1-ps  of transformations   generated by $-X$.   Then
$g(t)=\Phi^*_t(g)$ ($t\in(-\infty,\infty))$ is a solution of Ricci flow. Namely,  $g(t)$ satisfies
\begin{align}\label{ricci-equ}\frac{\partial g}{\partial t}=-2{\rm Ric}(g), ~ g(0)=g.
\end{align}
For simplicity,  we call  $g(t)$  the soliton  Ricci flow of  $(M, g)$.

 By (\ref{soliton-equation}), we have
 $$\langle\nabla R,\nabla f\rangle=- 2\operatorname{Ric}(\nabla f,\nabla f),$$
 where $R$ is the scalar curvature of $g$.
 It follows
 $$R+|\nabla f|^2={\rm Const.}$$
  Since $R$ is alway positive (\cite{Zh, Cao2}),  the above equation can be normalized by
 \begin{align}\label{scalar-equ} R+|\nabla f|^2=1.
 \end{align}

In dimension $n=2$, it is classified that the  ciger  is only  solution \cite{Ham,  CLN}.   Brendle prove that   the Bryant soliton is only  $3$-dimensional  non-flat $\kappa$-noncollapsed   steady gradient soliton up to scaling \cite{Bre1}. Here we call a steady gradient soliton as
 the Bryant Ricci soliton if it is rotational and is normalized by (\ref{scalar-equ}). In this paper, we always  assume $n\ge 4$,  if there is no special emphasis.

The following result has been proved in \cite{Cao2, ChD}.

 \begin{lem}\label{linear-growth}  Let $(M,g)$ be a steady gradient soliton. Suppose that the scalar curvature decays uniformly and have positive Ricci curvature outside of a compact set $K\subset M$. Then there exists positive constants $r_0$, $C_1$, $C_2$ such that for all $x\in M$ with  $\rho(x)\geq r_0$ it holds
 \begin{equation}\label{f-linear}
     C_1\rho(x)\leq f(x)\leq C_2\rho(x),
 \end{equation}
where $\rho(x)=d(p, x)$.
 \end{lem}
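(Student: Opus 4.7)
The plan is to exploit the normalized identity $R+|\nabla f|^2=1$ from \eqref{scalar-equ} together with the strict convexity of $f$ outside $K$ provided by the soliton equation $\nabla^2 f=\operatorname{Ric}(g)$. The upper bound is immediate: since $|\nabla f|\le 1$ everywhere, integrating along a unit-speed minimizing geodesic from $p$ to $x$ gives $f(x)-f(p)\le \rho(x)$, hence $f(x)\le C_2\rho(x)$ for any $C_2>1$ once $\rho(x)\ge r_0$ is large enough to absorb the additive constant $f(p)$.

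For the lower bound I would first use the uniform decay of $R$ to pick $r_1$ so that $|\nabla f(x)|\ge 1/2$ for $\rho(x)\ge r_1$, and set $K':=K\cup \overline{B}_{r_1}(p)$, which is compact. The convexity of $f$ outside $K$ and the non-vanishing of $\nabla f$ outside a larger compact confine the critical points of $f$ to $K$, and combined with the rate-one growth of $f$ along integral curves of $\nabla f/|\nabla f|^2$ at infinity this forces $f$ to be proper and bounded below, with $m:=\min_M f$ attained inside $K$. For $x$ with $\rho(x)\gg 1$, I would follow the integral curve $\sigma(s)$ of $-\nabla f/|\nabla f|$ starting at $x$, parametrised by arclength. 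Along $\sigma$ the function $f$ decreases at rate $|\nabla f|\ge 1/2$ so long as we remain outside $K'$, and since $f\ge m$ globally, $\sigma$ must enter $K'$ at some finite time $s_0>0$, reaching a point $y=\sigma(s_0)\in K'$. The triangle inequality yields $s_0\ge d(x,y)\ge \rho(x)-D$ with $D=\sup_{z\in K'}d(p,z)$, while integrating $|\nabla f|\ge 1/2$ along $\sigma$ gives $f(x)-f(y)\ge s_0/2\ge (\rho(x)-D)/2$. Absorbing constants produces $f(x)\ge C_1\rho(x)$ for $\rho(x)\ge r_0$ with, say, $C_1=1/4$.

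The step I expect to require the most care is verifying that $f$ is proper and bounded below on all of $M$ from the local hypothesis $\operatorname{Ric}>0$ only outside $K$; this is essential for the backward flow argument to terminate inside $K'$ in finite arclength, and it is precisely where the strict convexity of $f$ outside $K$ must be combined with the asymptotic behaviour $|\nabla f|\to 1$ at infinity. Once this is in place, the two flow computations above close the argument quickly.
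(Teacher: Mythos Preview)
The paper does not give its own proof of this lemma; it simply records the statement and attributes it to \cite{Cao2, ChD}. So there is nothing to compare your argument against line by line. Your proposal is along the same lines as the standard arguments in those references: the upper bound via the $1$-Lipschitz property of $f$ coming from $R+|\nabla f|^2=1$, and the lower bound by flowing backward along $-\nabla f/|\nabla f|$ until one lands in a fixed compact set.

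Your identification of the delicate point is accurate: one must know that $f$ is bounded below so that the backward flow from any far-away $x$ actually terminates in $K'$ after finite arclength. This can be closed using only the hypotheses at hand. Suppose some $x_0\notin K'$ has $f(x_0)<\inf_{K'} f$. Along the integral curve $\sigma$ of $-\nabla f$ starting at $x_0$, $f$ decreases, so $\sigma$ stays outside $K'$ forever; since $|\nabla f|\ge 1/2$ there, $f(\sigma(t))\to -\infty$, and the $1$-Lipschitz bound forces $\rho(\sigma(t))\to\infty$, hence $R(\sigma(t))\to 0$ and $|\nabla f|^2(\sigma(t))\to 1$. On the other hand, from $\nabla|\nabla f|^2=-\nabla R=2\operatorname{Ric}(\nabla f,\cdot)$ one gets $\frac{d}{dt}|\nabla f|^2(\sigma(t))=-2\operatorname{Ric}(\nabla f,\nabla f)<0$ outside $K$, so $|\nabla f|^2(\sigma(t))\le |\nabla f|^2(x_0)=1-R(x_0)<1$ for all $t$, contradicting $|\nabla f|^2(\sigma(t))\to 1$. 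Thus $f\ge \inf_{K'}f$ globally, and your backward-flow argument goes through. With this filled in, your proof is correct and is essentially the argument of the cited references.
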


\subsection{Asymptotically cylindrical steady Ricci solitons}

Recall (cf. \cite{Bre2})
 \begin{defi}\label{def1.2} We say  that a Riemannian manifold $(M,g)$ is asymptotically cylindrical if the following holds:

(i) The scalar curvature satisfies $\frac{C_1}{\rho(x)}\leq R(x)\leq \frac{C_2}{\rho(x)}$ as $\rho(x)>>1$,  where $C_1$, $C_2$ are two positive constants.

 (ii) Let $p_m$ be an arbitrary sequence of marked points going to infinity. Consider the rescaled metrics
 \[\hat{g}^{(m)}(t)=r_m^{-1}\Phi^*_{r_mt}(g),\]
 where $r_mR(p_m)=\frac{n-1}{2}+o(1)$ as $m\rightarrow \infty$, the flow $(M,\hat{g}^{(m)}(t),p_m)$ converges in the Cheeger-Gromov sense to a family of shrinking cylinders $(\mathbb S^{n-1}\times \mathbb{R},~ \bar{g}(t))$, $t\in (0,1)$. The metric  $\bar g(t)$ is given by
 \begin{align}\label{split-flow}\bar g(t)= (n-2)(2-2t)g_{\mathbb S^{n-1}(1)} +dr^2,
 \end{align}
 where  $\mathbb S^{n-1}(1)$ is the unit sphere in the Euclidean  space.

 \end{defi}

Under the  condition (ii) in Definition   \ref{def1.2},   the following curvature behavior estimate  for the  steady gradient solitons  has been proved in \cite[Lemma 6.5, 6.6]{DZ2}.

 \begin{lem}\label{DZ-6.6} Let $(M,g)$ be a noncompact steady Ricci soliton, which satisfies  the  condition (ii) in Definition   \ref{def1.2}.  For $p\in M$,  let $\{e_1,e_2,...,e_n\}$ be an orthonormal basis of $T_pM$ with respect to metric $g$, and $e_n=\frac{\nabla f}{|\nabla f|}$.   Then
\begin{equation*}
    \begin{aligned}
     &\frac{|\operatorname{Hess} R|+|\Delta \mathrm{Ric(g)}|}{R^2} \rightarrow 0, \quad \text { as } \quad \rho(x) \rightarrow \infty, \\
     &\frac{\operatorname{Ric}\left(e_i, e_j\right)}{R} \rightarrow \frac{\delta_{i j}}{(n-1)}, \quad \text { as } \quad \rho(x) \rightarrow \infty, \\
     &\frac{\operatorname{Ric}\left(e_i, e_n\right)}{R} \rightarrow 0 \text {, as } \quad \rho(x) \rightarrow \infty, \\
     &\frac{\operatorname{Rm}\left(e_k, e_j, e_k, e_l\right)}{R} \rightarrow \frac{(1-\delta_{jk})\delta_{jl}}{(n-1)(n-2)}, \quad \text { as } \quad \rho(x) \rightarrow \infty, \\
     &\frac{\operatorname{Rm}\left(e_n, e_i, e_j, e_k\right)}{R} \rightarrow 0, \quad \text { as } \quad \rho(x) \rightarrow \infty.
     \end{aligned}
 \end{equation*}
 In particular,  there exists a compact set $K\subset M$  such that $(M,g)$ has positive sectional curvature outside of $K$.

 \end{lem}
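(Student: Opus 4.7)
My plan is to argue by contradiction, driven by the Cheeger--Gromov convergence in condition~(ii) of Definition~\ref{def1.2}. Suppose some asserted asymptotic identity fails; then I can find a sequence of points $p_m\in M$ with $\rho(p_m)\to\infty$ along which the corresponding scale-invariant curvature ratio stays bounded away from its claimed limit. Choosing $r_m$ with $r_m R(p_m)\to (n-1)/2$, the rescaled Ricci flows $\hat g^{(m)}(t)=r_m^{-1}\Phi^{*}_{r_mt}(g)$ converge by condition~(ii) to the shrinking cylinder $(\mathbb{S}^{n-1}\times\mathbb{R},\bar g(t))$ with $\bar g(0)=2(n-2)g_{\mathbb{S}^{n-1}(1)}+dr^2$. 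Since every quantity in the statement is scale-invariant and the convergence is $C^\infty$ on compact sets, they pass to smooth limits at the basepoint.

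A direct computation on $\bar g(0)$ yields $\bar R=(n-1)/2$; $\operatorname{Ric}(\bar g)$ diagonal with eigenvalue $\tfrac12$ on $\mathbb{S}^{n-1}$ directions and $0$ on the axial direction; $\operatorname{Rm}(e_k,e_j,e_k,e_l)=\tfrac{1}{2(n-2)}\delta_{jl}$ for $j\ne k$ in the sphere and vanishing Rm whenever any index is axial; and $\bar R$ constant so $\operatorname{Hess}\bar R=0$ and $\Delta\operatorname{Ric}(\bar g)=0$. Dividing by $\bar R$ reproduces exactly the ratios claimed, which gives the contradiction. The one nontrivial step is to identify the unit direction $e_n=\nabla f/|\nabla f|$ on the soliton with the axial direction $\partial_r$ on the cylinder: for this I would use $|\nabla f|^2=1-R\to 1$ together with $\nabla R=-2\operatorname{Ric}(\nabla f,\cdot)$ (from~(\ref{scalar-equ}) and~(\ref{soliton-equation})). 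In the rescaled metric $\nabla R$ tends uniformly to zero on compact neighborhoods of the basepoint, which forces the Cheeger--Gromov pushforward of $\nabla f/|\nabla f|$ to become a unit vector in $\ker\operatorname{Ric}(\bar g)$, i.e.\ along the $\mathbb{R}$-factor. The higher-derivative estimate $(|\operatorname{Hess} R|+|\Delta\operatorname{Ric}|)/R^2\to 0$ is read off in the same way from the $C^\infty$ convergence.

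The hard part will be the last assertion, namely strict positivity of sectional curvature outside a compact set. Sectional curvatures of $2$-planes spanned by two spherical directions are bounded below by a positive multiple of $R$ thanks to the limits above, but the mixed sectional curvatures $K(e_n,e_i)=R_{nini}$ are $o(R)$ and their sign is not decided at leading order. To resolve this I would combine the soliton identity $R_{ij}=\nabla_i\nabla_jf$ with the commutator $\nabla_k\nabla_i\nabla_jf-\nabla_i\nabla_k\nabla_jf=-R_{kijl}\nabla^lf$ to extract $R_{nini}$ as a next-order correction with a computable sign, or alternatively invoke Hamilton's strong maximum principle for $\operatorname{Rm}$ along the soliton Ricci flow~(\ref{ricci-equ}), using that any vanishing of a sectional curvature at a point would force a local splitting incompatible with the cylindrical asymptotics and with the positivity of $\operatorname{Ric}$ visible from the limits of the second paragraph.
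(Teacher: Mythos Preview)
The paper does not give its own proof of this lemma; it is quoted from \cite[Lemma~6.5,~6.6]{DZ2}. Your contradiction argument via the Cheeger--Gromov convergence in condition~(ii) is the standard route to the five asymptotic limits, and your identification of $\nabla f/|\nabla f|$ with the axial direction of the cylinder (via $|\nabla f|^2=1-R\to 1$ and the fact that the limit unit vector must lie in $\ker\operatorname{Ric}(\bar g)$) is correct and is essentially how such statements are proved in the cited reference.

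The gap is in the final assertion. You rightly observe that the mixed sectional curvatures $K(e_n,e_i)=R_{nini}$ are only $o(R)$, so their sign is not determined by the cylinder limit, but neither of your proposed remedies closes this. For option~(a), the commutator identity $R_{ijkl}\nabla^l f=\nabla_i\operatorname{Ric}_{jk}-\nabla_j\operatorname{Ric}_{ik}$ yields $|\nabla f|\,R_{inin}=\nabla_i\operatorname{Ric}_{ni}-\nabla_n\operatorname{Ric}_{ii}$, a difference of two terms each of which is itself $o(R)$; extracting a definite sign from this requires the sharper second-order asymptotics of the type~(\ref{Brendle-fRm}) (or their precursors in \cite{DZ2}), not merely the first-order limits you have established. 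For option~(b), Hamilton's strong maximum principle for the curvature operator presupposes global nonnegativity and produces a parallel splitting of the kernel rather than a strict positivity upgrade; since no curvature-sign hypothesis is assumed on $(M,g)$, it does not apply. As written, the positivity of sectional curvature outside a compact set remains unproved in your proposal.
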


 By the above lemma   (also see  \cite[Section 3]{ChD}),    the first condition in  Definition  \ref{def1.2} can be deduced from  the second one.   Hence,  we may say  that  $(M,g)$ is asymptotically cylindrical  with only assuming the second condition.

Lemma \ref{DZ-6.6} also implies that that $(M, g)$ has positive sectional curvature outside of a compact set of $M$. Then  by  Lemma \ref{linear-growth},  the defining function $f$ has a linear growth. Thus we may assume $f\geq 4$ on $M$.

\subsection{A class of Riemannian manifolds with    $(\ref{pinching-condition})$}

 \begin{defi}\label{PIC-Ricci}   A Riemannian manifold $(M^n, g)$  $(n\geq 4)$  is said to have PIC1,  if for any $p\in M$, and any orthonormal $4$-frame $\{e_i,e_j,e_k,e_l\}$ of $T_pM$,  we have
\begin{align}\label{PIC1}
     R_{ikik}+R_{jkjk}+\lambda^2(R_{ilil}+R_{jljl})-2\lambda R_{ijkl}>0 ~for~all~\lambda\in[0,1].
 \end{align}

 \end{defi}

   $(M^n, g)$   is of PIC1 is equivalent to that    $(\tilde M^{n+1}=M\times \mathbb R,  \tilde g=g+dx\otimes dx )$ is of   PIC (cf. \cite{Bre3}).  Moreover,   $(M^n,g)$ is of  PIC  if and only if   (\ref{PIC1}) holds for  $\lambda=1$.  Thus   PIC1 implies  PIC.

    In case that   (\ref{PIC1}) holds for  $\lambda=0$, namely,
   \begin{align}\label{PIC0}
    R_{\{i,j,k\}}=R_{ikik}+R_{jkjk}>0,
   \end{align}
    we may regard  (\ref{PIC0})  as   (\ref{PIC1})  for the orthonormal $4$-frame $\{e_i,e_j,e_k, e_0\}$ of $T_{\tilde p}\tilde M$, where $e_0$ is the unit vector field tangent to $\mathbb R$.
 For convenience,     we call $(M^n, g)$ with PIC0 if (\ref{PIC0}) holds for any orthonormal $3$-frame $\{e_i,e_j,e_k\}$ of $T_pM$.
  Clearly,  the positivity of sectional curvature of  $(M^n, g)$ implies PIC0,    and   also PIC1 implies  PIC0.   We note that in case of $n=3$   PIC0 is equivalent to the positivity of Ricci curvature of $g$.

 Recall that the function  $P=\sup_h\frac{{\rm Rm}(h,h)}{|h|^2}$ for any symmetric $2$-tensor $h\neq 0$.  Then at each point  $x$ in $M$, we can choose an  orthonormal  frame  such that $h$ is diagonal and
\begin{align}\label{pic1}
    \frac{{\rm Rm}(h,h)}{|h|^2}(x)=  \frac{\sum_{i,j=1}^nR_{ijij}h_{ii}h_{jj}}{\sum_{i=1}^nh_{ii}^2}.
 \end{align}
 Denote $(M_{ij})=(R_{ijij})$ as a matrix.    Then it  is symmetric  and  its  diagonal elements are all $0$.   Thus  $P$ is bounded by the norm of the largest eigenvalue of $(M_{ij})$. For simplicity, we  let $h_i=h_{ii}$.

 \begin{lem}\label{PIC1-condition}
 If $(M^n,g)$, $n\geq 4$ has  PIC0,   then for any orthonormal $4$-frame $\{e_i,e_j,e_k,e_l\}$ of $T_pM$, $1\leq i< j< k< l\leq n$, and any  symmetric 2-tensor $h$,  it holds
\begin{align}\label{first-part}
    \sum_{a,b\in\{i,j,k,l\}}M_{ab}(h_a-h_b)^2&= M_{ij}(h_i-h_j)^2+M_{ik}(h_i-h_k)^2+M_{il}(h_i-h_l)^2 \notag\\
    &+M_{jk}(h_j-h_l)^2+M_{jl}(h_j-h_l)^2+M_{kl}(h_k-h_l)^2\notag\\
    &\ge 0
\end{align}

 \end{lem}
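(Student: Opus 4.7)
The plan is to verify the claim by reducing it to an elementary three-variable quadratic-form inequality, in which the PIC0 hypothesis enters through the positivity of all ``edge-sharing'' sums $M_{ab}+M_{cb}$ (with shared vertex $b$).

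Since $Q(h):=\sum_{a,b\in\{i,j,k,l\}}M_{ab}(h_a-h_b)^2$ is invariant under adding a constant to all of $h_i,h_j,h_k,h_l$, we may restrict to the hyperplane $h_i+h_j+h_k+h_l=0$. Next, introduce $u=h_i-h_j$, $v=h_k-h_l$, $x=h_i+h_j=-(h_k+h_l)$, $y=(u-v)/2$, $z=(u+v)/2$. A short computation gives
\begin{align*}
h_i-h_j&=y+z, & h_k-h_l&=z-y, \\
h_i-h_k&=x+y, & h_j-h_l&=x-y, \\
h_i-h_l&=x+z, & h_j-h_k&=x-z,
\end{align*}
and substituting into $Q$ yields
\[
Q=\alpha x^2+\beta y^2+\gamma z^2+2\delta xy+2\epsilon xz+2\zeta yz,
\]
with $\alpha=M_{ik}+M_{jl}+M_{il}+M_{jk}$, $\beta=M_{ik}+M_{jl}+M_{ij}+M_{kl}$, $\gamma=M_{il}+M_{jk}+M_{ij}+M_{kl}$, $\delta=M_{ik}-M_{jl}$, $\epsilon=M_{il}-M_{jk}$, $\zeta=M_{ij}-M_{kl}$. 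Crucially, each diagonal coefficient splits as a sum of two PIC0-positive edge-sharing pairs (for instance $\alpha=(M_{ik}+M_{jk})+(M_{il}+M_{jl})$), while the off-diagonals are differences of $M$'s on disjoint edges, where PIC0 gives no direct sign control.

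Bounding the cross terms by $2|\delta xy|\leq|\delta|(x^2+y^2)$ and similarly for the other two, the claim $Q\geq 0$ reduces to the three diagonal-dominance bounds
\[
\alpha\geq|\delta|+|\epsilon|,\quad \beta\geq|\delta|+|\zeta|,\quad \gamma\geq|\epsilon|+|\zeta|.
\]
Each of these I would establish by a four-case sign analysis on the two absolute values; in each case, after cancellation, the bound collapses to an edge-sharing PIC0 inequality $M_{ab}+M_{cb}\geq 0$. For instance, when $M_{ik}\geq M_{jl}$ and $M_{il}\geq M_{jk}$, one finds $\alpha-|\delta|-|\epsilon|=2(M_{jk}+M_{jl})\geq 0$, which is PIC0 on the $3$-frame $\{e_j,e_k,e_l\}$ with common index~$j$. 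The twelve cases (four for each of $\alpha,\beta,\gamma$) use exactly the twelve edge-sharing PIC0 inequalities on the four-vertex complete graph. The main obstacle is guessing the substitution in the second paragraph: it is chosen precisely so that every cross term is a disjoint-edge difference of $M$'s (the only quantities PIC0 cannot control), while each diagonal term is a sum of four $M$'s that PIC0 does control; once this decoupling is in hand, the remaining verification is routine.
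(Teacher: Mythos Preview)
Your argument is correct and genuinely different from the paper's. The paper proceeds by a sign-pattern observation: under PIC0, among the six sectional curvatures $M_{ab}$ on a $4$-frame at most two can be negative, and if there are two they must sit on disjoint edges (say $M_{ij}$ and $M_{kl}$); then PIC0 forces $\max\{|M_{ij}|,|M_{kl}|\}<\min\{M_{ik},M_{il},M_{jk},M_{jl}\}=:M$, and replacing every coefficient by $\pm M$ collapses the whole sum to $M(h_i+h_j-h_k-h_l)^2\ge 0$. Your route instead quotients out the translation kernel, chooses coordinates $(x,y,z)$ so that each off-diagonal entry of the resulting $3\times 3$ form is a difference of $M$'s on \emph{disjoint} edges while each diagonal entry is a sum of $M$'s on \emph{adjacent} edges, and then verifies row-wise diagonal dominance, which in every sign case reduces to a single PIC0 inequality $M_{ab}+M_{cb}>0$. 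The paper's proof is shorter once one sees the disjoint-edge observation and the perfect-square identity; your proof is more mechanical and makes transparent exactly which twelve PIC0 inequalities are being consumed (one per case), at the cost of needing to discover the change of variables. Both are valid and neither dominates the other.
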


 \begin{proof}
By (\ref{PIC0}), it easy to see that  there are at most two negative elements  $M_{a'b'}$ and  $M_{c'd'}$  in these $M_{ab}$.  Moreover, the indices  $a', b', c'$ and  $d'$ satisfy
$$\{a', b', c', d' \}=\{i,j,k,l\}.$$
W.L.O.G.,  we may assume $M_{ij}<0$, $M_{kl}<0$. Thus
\[\max\{|M_{ij}|,|M_{kl}|\}<\min\{M_{ik},M_{il},M_{jk},M_{jl}\}=M>0.\]
Hence,
\begin{align}\label{positive}
    & M_{ij}(h_i-h_j)^2+M_{ik}(h_i-h_k)^2+M_{il}(h_i-h_l)^2\notag \\
    &+M_{jk}(h_j-h_l)^2+M_{jl}(h_j-h_l)^2+M_{kl}(h_k-h_l)^2\notag \\
    &\geq M((h_i-h_k)^2+(h_i-h_l)^2+(h_j-h_k)^2+(h_j-h_l)^2\notag\\
    &-(h_i-h_j)^2-(h_k-h_l)^2)\notag\\
    &=M(h_i^2+h_j^2+h_k^2+h_l^2+2h_ih_j+2h_kh_l-2h_ih_k-2h_ih_l-2h_jh_k-2h_jh_l)\notag\\
    &=M(h_i+h_j-h_k-h_l)^2\geq 0.
\end{align}

The relation (\ref{positive}) is  also true for  the case that only one of $M_{ij}$ and $M_{kl}$ is negative.  On the other hand, (\ref{first-part}) is obviously true if all $M_{ab}\geq 0$. Hence, the lemma is proved.

 \end{proof}

\begin{prop}\label{P-condition}   PIC0 implies  $(\ref{pinching-condition})$  when $n\ge 4$.

\end{prop}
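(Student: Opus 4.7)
The plan is to reduce the pointwise inequality $2P(x)<R(x)$ to a matrix inequality for $M_{ij}:=R_{ijij}$ in a frame diagonalizing the test tensor $h$, and then to prove this inequality by combining Lemma~\ref{PIC1-condition} with a simple aggregation of the PIC0 triple inequalities. The crux is to find the right algebraic splitting of $R|h|^2-2\,\mathrm{Rm}_g(h,h)$ into two pieces of definite sign.

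At $x\in M$, fix $h\ne 0$ and an orthonormal frame in which $h$ is diagonal with eigenvalues $h_1,\dots,h_n$, so that $\mathrm{Rm}_g(h,h)=\sum_{i,j}M_{ij}h_ih_j$, $|h|^2=\sum_{a}h_a^2$ and $R=\sum_{i,j}M_{ij}$, as in~\eqref{pic1}. Using $|h|^2-2h_ih_j=(h_i-h_j)^2+\sum_{k\ne i,j}h_k^2$ and swapping orders of summation, one obtains the algebraic identity
\begin{equation*}
R|h|^2-2\,\mathrm{Rm}_g(h,h)\;=\;2\sum_{i<j}M_{ij}(h_i-h_j)^2\;+\;2\sum_{k=1}^{n}S_k\,h_k^2,
\end{equation*}
where $S_k:=\sum_{i<j,\,i,j\ne k}M_{ij}$. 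The proposition will follow once both pieces on the right are shown to be nonnegative, with the second strictly positive whenever $h\ne 0$.

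For the second piece I would prove $S_k>0$ for every $k$ when $n\ge 4$. Summing the three PIC0 inequalities attached to a triple $\{a,b,c\}\subset\{1,\dots,n\}\setminus\{k\}$ gives $M_{ab}+M_{ac}+M_{bc}>0$ for every such triple. Summing this estimate over all $\binom{n-1}{3}$ triples in $\{1,\dots,n\}\setminus\{k\}$, and counting that each pair $\{a,b\}$ in the complement of $k$ appears in exactly $n-3$ of these triples, yields $(n-3)\,S_k>0$; hence $S_k>0$ for $n\ge 4$, and therefore $\sum_k S_k h_k^2>0$ whenever $h\ne 0$.

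For the first piece I would apply Lemma~\ref{PIC1-condition} to each $4$-subset $\Lambda\subset\{1,\dots,n\}$ and sum over $\Lambda$. Since each pair $\{a,b\}$ is contained in exactly $\binom{n-2}{2}\ge 1$ such subsets (for $n\ge 4$), summation gives
\begin{equation*}
0\;\le\;\sum_{\Lambda}\;\sum_{a<b,\,a,b\in\Lambda}M_{ab}(h_a-h_b)^2\;=\;\binom{n-2}{2}\sum_{a<b}M_{ab}(h_a-h_b)^2,
\end{equation*}
so $\sum_{a<b}M_{ab}(h_a-h_b)^2\ge 0$. Combining the two estimates gives $R|h|^2-2\,\mathrm{Rm}_g(h,h)>0$ for every $h\ne 0$, and taking the supremum (which is attained since $\{|h|=1\}$ is compact) yields $2P(x)<R(x)$ pointwise. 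The only non-trivial use of PIC0 is inside Lemma~\ref{PIC1-condition}, whose sum-of-squares argument does the real algebraic work; the positivity of $S_k$ and the splitting identity displayed above are the only additional ingredients required.
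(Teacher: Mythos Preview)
Your proof is correct and follows essentially the same route as the paper: the same diagonalization, the same splitting of $R|h|^2-2\,\mathrm{Rm}_g(h,h)$ into $\sum_{i<j}M_{ij}(h_i-h_j)^2$ plus $\sum_k S_k h_k^2$, the same use of Lemma~\ref{PIC1-condition} summed over $4$-subsets for the first piece, and the same PIC0-based positivity of $S_k$ for the second. Your argument for $S_k>0$ via summing the triple inequalities $M_{ab}+M_{ac}+M_{bc}>0$ over all $\binom{n-1}{3}$ triples is slightly more symmetric than the paper's cyclic grouping, but the content is identical.
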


 \begin{proof}   To prove $R-2P>0$, we  need to check that for any  non-zero  symmetric  2-tensor $h$ it holds
\begin{align}\label{RP2} \sum_{i<j}M_{ij}(\sum_{k=1}^nh_k^2)-2\sum_{i<j}M_{ij}h_ih_j>0.
\end{align}
Note
\begin{align}\label{R-2P}
    (\sum_{i<j}M_{ij})(\sum_{k=1}^nh_k^2)-2\sum_{i<j}M_{ij}h_ih_j=\sum_{i<j}M_{ij}(h_i-h_j)^2+
    \sum_{i<j}M_{ij}(\sum_{k\neq i,j}h_k^2).
\end{align}
 By Lemma \ref{PIC1-condition},   we have
\begin{align}\label{R-2P-1}
    \sum_{i<j}M_{ij}(h_i-h_j)^2=\frac{2}{(n-2)(n-3)}\sum_{1\leq i<j<k<l\leq n}\sum_{a,b\in\{i,j,k,l\}}M_{ab}(h_a-h_b)^2\geq 0.
\end{align}
On the other hand,
any fixed  $i\in \{1, ..., n\}$,  by  (\ref{PIC0}),  it is easy to see
\begin{align}
4\sum_{j<k\neq i}M_{jk}&=2\sum_{j\neq i} \sum_{k\neq i}M_{jk}\notag\\
&= \sum_{j\neq i} ( R_{\{1,2,j\}} +R_{\{2,3,j\}}+...+ R_{\{i-1,i+1,j\}}+...+ R_{\{n,1,j\}} \notag\\
&>0. \notag
\end{align}
Then
\begin{align}\label{R-2P-2}
    \sum_{i<j}M_{ij}(\sum_{k\neq i,j}h_k^2)=\sum_i^nh_i^2(\sum_{j<k\neq i}M_{jk})>0.
\end{align}
Hence,   combining  $(\ref{R-2P-1})$ and $(\ref{R-2P-2})$,  we  get (\ref{RP2})  from (\ref{R-2P}) immediately.

 \end{proof}

 \section{Fine  estimates   of curvature behavior}

 In this section,  we  modify the argument in \cite[Section 2]{Bre2} to improve the estimate for the curvature behavior around
 level sets of  the steady  Ricci soliton with  asymptotically cylindrical property.  The result will be used to construct  the solutions  of elliptic equation (\ref{vector-equ}) for VFs in Section 3.

For a fixed point $p\in M$,  we choose an orthonormal frame $\{e_1,e_2,...,e_n\}$ with $e_n=\frac{\nabla f}{|\nabla f|}$.
By \cite[Proposition 2.1]{Bre2} (also see  Lemma \ref{DZ-6.6}),  we have
 \begin{align}\label{Ric1}
     \operatorname{Ric}\left(e_i, e_j\right)=\frac{1}{n-1} R \delta_{i j}+o\left(r^{-1}\right), ~i,j\in\{1,2,...,n-1\}.
 \end{align}
 and
 \begin{align}\label{Ric2}
     2 \operatorname{Ric}\left(e_i, X\right)=-\left\langle e_i, \nabla R\right\rangle=o\left(r^{-\frac{3}{2}}\right).
 \end{align}
 Moreover, we have
 \begin{align}\label{Brendle-R-Ric}
     |\operatorname{Ric}(g)|^2-\frac{1}{n-1}R^2=O(r^{-\frac{5}{2}})
    \end{align}
      and
 \begin{align}\label{Ric3}
     2 \operatorname{Ric}(X, X)=-\langle X, \nabla R\rangle=\Delta R+2|\operatorname{Ric}(g)|^2=\frac{2}{n-1} R^2+o\left(r^{-2}\right).
 \end{align}

Let
\begin{align}\label{T}
    T=(n-1)\operatorname{Ric}(g)-Rg+Rdf\otimes df
\end{align}
 be defined as in   \cite[Section 2]{Bre2}. The following estimate was prove by Brendle.

 \begin{lem}\label{Brendle-round}
 If  $(M,g)$ is asymptotically cylindrical,  it holds
 \begin{align}\label{Brendle-T}
     |T|\leq O(r^{-\frac{3}{2}}),~ |\nabla T|\leq O(r^{-\frac{3}{2}-16\delta}).
 \end{align}
As a consequence,
 \begin{align}\label{Brendle-R}
     |\nabla R|\leq O(r^{-\frac{3}{2}-16\delta}),~|\Delta R|\leq O(r^{-2-8\delta}),
 \end{align}
and
 \begin{align}\label{Brendle-Ric}
   &  |\nabla \operatorname{Ric(g)}|\leq O(r^{-\frac{3}{2}-16\delta}),~ |D^2\operatorname{Ric(g)}|\leq O(r^{-2-8\delta}),\notag\\
   & |D^3\operatorname{Ric(g)}|\leq O(r^{-\frac{5}{2}-4\delta}).
 \end{align}
 \end{lem}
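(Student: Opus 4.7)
My plan is to follow Brendle's approach in \cite{Bre2}, using that $T$ is engineered to vanish on the exact cylinder $\mathbb{R} \times \mathbb{S}^{n-1}$ and that the soliton structure provides a tractable elliptic equation for $T$.

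\textbf{Step 1 (pointwise bound $|T| = O(r^{-3/2})$).} Work in the adapted orthonormal frame $\{e_1,\dots,e_{n-1}, e_n = X/|X|\}$. The components of $T$ split into three groups. Cross terms $T_{in} = (n-1)\operatorname{Ric}(e_i, e_n)$ are controlled directly by \eqref{Ric2}. The radial term $T_{nn}$ is handled by combining \eqref{Ric3} with the normalization $R+|X|^2 = 1$. The transverse block $T_{ij}$ ($i,j < n$) is the delicate piece: a naive expansion using \eqref{Ric1} together with \eqref{Brendle-R-Ric} only forces $|A|^2 = O(r^{-5/2})$, that is, $|T| = O(r^{-5/4})$. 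To bridge the remaining gap to $r^{-3/2}$, one exploits the trace constraint $\sum_{i<n} T_{ii} = -T_{nn} + O(r^{-2})$ together with a Lyapunov/maximum-principle argument driven by the soliton equation, where the asymptotic cylindrical hypothesis enters through the sharp polynomial rate of the cylindrical convergence (and not merely its pointwise form).

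\textbf{Step 2 (derivative bounds).} Differentiating the soliton equation yields $\nabla R = 2\operatorname{Ric}(X,\cdot)$ together with the contracted Bianchi relation $\nabla_i R_{jk} - \nabla_j R_{ik} = R_{ijkl}\nabla^l f$. Combining these with the Lichnerowicz identity for $\operatorname{Ric}$ on a steady soliton produces a Lichnerowicz-type equation
\[ \Delta T + \nabla_X T = Q(T, \operatorname{Rm}), \]
with $Q$ at worst quadratic in curvature. Fix $p$ with $\rho(p) = r \gg 1$ and rescale by $R(p)^{-1} \sim r$. By the asymptotic cylindrical hypothesis, the unit ball at $p$ in the rescaled metric converges in $C^{k,\alpha}$ to a fixed ball in $\mathbb{R} \times \mathbb{S}^{n-1}$ at a polynomial rate encoded by a small fixed parameter $\delta > 0$. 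Apply interior Schauder estimates (equivalently, Shi-type parabolic estimates on the soliton Ricci flow $g(t) = \Phi_t^* g$) to the rescaled equation: transferring the pointwise bound of Step 1 into a $C^{1,\alpha}$ bound costs a factor $r^{-16\delta}$, and unwinding the rescaling yields $|\nabla T| = O(r^{-3/2-16\delta})$. Iterating the same bootstrap once and twice more on the derivatives of $T$ and $\operatorname{Ric}$ gives the stated higher-derivative bounds, with each successive derivative halving the available $\delta$-improvement ($16\delta \to 8\delta \to 4\delta$).

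\textbf{Step 3 (consequences) and main obstacle.} The estimates on $\nabla R$, $\Delta R$, $\nabla\operatorname{Ric}$, and higher derivatives of $\operatorname{Ric}$ then follow algebraically from Steps 1--2. Namely, $\nabla R = 2\operatorname{Ric}(X,\cdot)$ rewritten via $\operatorname{Ric} = \tfrac{1}{n-1}(Rg - R\,df\otimes df + T)$ gives $|\nabla R|$; $\Delta R = -\nabla_X R - 2|\operatorname{Ric}|^2$ (the steady-soliton identity) gives $|\Delta R|$; and the derivatives of $\operatorname{Ric}$ are read off by inverting the definition of $T$ and applying the bounds on $\nabla^k T$, $\nabla^k R$, and $\nabla^k f$ (the last being $O(1)$ on the relevant scales). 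The main obstacle in the plan is Step 1: extracting the sharp exponent $\tfrac{3}{2}$ from \eqref{Ric1}--\eqref{Brendle-R-Ric}, where a direct algebraic expansion falls short by a factor $r^{-1/4}$, is the most technical input and is exactly the soliton-specific refinement supplied by \cite{Bre2}.
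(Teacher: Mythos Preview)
The paper does not give its own proof of this lemma: it is stated as ``The following estimate was proved by Brendle'' and simply cited from \cite{Bre2}. So the only benchmark here is Brendle's argument in \cite[Section~2]{Bre2}, and your sketch is indeed in that spirit---derive a Lichnerowicz-type equation for $T$, apply a weighted maximum-principle argument to get the sharp $|T|=O(r^{-3/2})$, then bootstrap derivatives.

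A few points of imprecision are worth flagging. In Step~1, the jump from $O(r^{-5/4})$ to $O(r^{-3/2})$ is not achieved by the trace constraint you mention; the actual mechanism (visible in this paper's Proposition~\ref{Improve-T}, which reproves and improves the lemma) is to compute the drift Laplacian of $|T|^2$ along the soliton, exploit the explicit curvature asymptotics \eqref{AC1} to show the reaction term has the right sign up to lower-order errors, and then apply the maximum principle on level sets of $f$ with the weight $f^2$. In Step~2, Schauder estimates do not ``cost'' a factor $r^{-16\delta}$; rather, the parameter $\delta$ is the polynomial rate of convergence to the cylinder, it enters the source terms of the $T$-equation, and the halving pattern $16\delta\to 8\delta\to 4\delta$ comes from standard interpolation between the $C^0$ bound on $T$ and Shi's higher-derivative bounds (exactly as in \eqref{Shi-est}--\eqref{derivative-T} of this paper). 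With these corrections your outline matches the cited proof.
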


The following  curvature estimates   was also  proved  by Brendle.

 \begin{lem}\label{Brendle-round-2}
  \begin{align}\label{Brendle-fR}
     fR=\frac{n-1}{2}+O(r^{-8\delta}),
 \end{align}
 \begin{align}\label{Brendle-fRic}
     f\operatorname{Ric(g)}\leq (\frac{1}{2}+o(1))g,~ f^2\operatorname{Ric(g)}\geq cg, ~c>0,
 \end{align}
 and
 \begin{align}\label{Brendle-fRm}
            f R_{i j k l}\notag &=\frac{1}{2(n-2)}\left(g_{i k}-\partial_i f \partial_k f\right)\left(g_{i k}
            -\partial_i f \partial_k f\right) \notag\\
         &-\frac{1}{2(n-2)}\left(g_{i l}-\partial_i f \partial_l f\right)\left(g_{j k}-\partial_j f \partial_k f\right)\notag\\
         &+O(r^{-8\delta}).
      \end{align}

 \end{lem}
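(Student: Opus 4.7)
The plan is to derive the three asymptotic formulas from the tensor $T = (n-1)\operatorname{Ric}(g) - Rg + Rdf\otimes df$ together with the decay estimates in Lemma \ref{Brendle-round}, combined with the soliton identities $R_{ij} = \nabla_i\nabla_j f$, $\Delta f = R$, and the normalization $R + |\nabla f|^2 = 1$.

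First I would establish $fR = \frac{n-1}{2} + O(r^{-8\delta})$ by an ODE argument along the integral curves of $X$. Setting $u = fR - \frac{n-1}{2}$, the identities $X(f) = 1 - R$ and $X(R) = -2\operatorname{Ric}(X, X)$ give
\[
X(u) = R - R^2 - 2f\operatorname{Ric}(X, X).
\]
Combining (\ref{Ric3}) with the refined bounds $|\Delta R| \leq O(r^{-2-8\delta})$ and $|\operatorname{Ric}|^2 = R^2/(n-1) + O(r^{-5/2})$ from (\ref{Brendle-R}) and (\ref{Brendle-R-Ric}) yields $\operatorname{Ric}(X, X) = R^2/(n-1) + O(r^{-2-8\delta})$. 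Substituting this together with $fR = u + (n-1)/2$ into the expression for $X(u)$ cancels the leading linear term and leaves the first-order linear ODE
\[
X(u) + \frac{2R}{n-1}\, u = O(r^{-1-8\delta}).
\]
Since $X \approx \partial_r$ and $R \approx (n-1)/(2r)$ along $X$-trajectories (because $|\nabla f|^2 = 1-R \to 1$), the integrating factor is asymptotically $r$, so $(ru)' = O(r^{-8\delta})$. Integrating from a reference cross-section (on which $u \to 0$ by the asymptotically cylindrical hypothesis) yields $u = O(r^{-8\delta})$.

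The remaining statements then follow from rearranging $T$ as $(n-1)\operatorname{Ric}(g) = Rg - Rdf\otimes df + T$ and plugging in the $fR$ asymptotic. Multiplying through by $f$ and combining with $fR = (n-1)/2 + o(1)$ and $f|T| \leq O(r^{-1/2}) \to 0$ gives the upper bound $f\operatorname{Ric}(g) \leq (\frac{1}{2} + o(1))g$. For the lower bound $f^2\operatorname{Ric}(g) \geq cg$, the tangential directions yield $\operatorname{Ric}(e_i,e_i) = R/(n-1) + O(r^{-3/2})$, so $f^2\operatorname{Ric}(e_i,e_i) \gtrsim f \to \infty$; along $e_n$ the formula gives $(n-1)\operatorname{Ric}(e_n,e_n) = R(1 - |\nabla f|^2) + T(e_n,e_n) = R^2 + O(r^{-3/2})$, whence $f^2\operatorname{Ric}(e_n,e_n) \to (n-1)/4 > 0$. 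Finally, the curvature formula follows from the limit $R_{ijkl}/R \to \frac{(1-\delta_{jk})\delta_{jl}}{(n-1)(n-2)}$ of Lemma \ref{DZ-6.6} (with vanishing components involving $e_n$) together with the $fR$ normalization, recognizing $g_{ik} - \partial_i f \partial_k f$ as the induced metric on the level set of $f$. The main obstacle is upgrading the qualitative $o(1)$ errors in Lemma \ref{DZ-6.6} to the quantitative $O(r^{-8\delta})$ rate, which requires propagating the strengthened bound $|\nabla T| \leq O(r^{-3/2-16\delta})$ through the cylindrical expansion.
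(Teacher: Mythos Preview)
The paper does not prove this lemma; it is quoted from Brendle \cite{Bre2} with the line ``The following curvature estimates was also proved by Brendle.'' What the paper does prove are the sharpened versions, Proposition \ref{Improve-fR} and Proposition \ref{Improve-fRm}, and those proofs are the natural point of comparison.

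For \eqref{Brendle-fR} your ODE on $u=fR-\tfrac{n-1}{2}$ is correct and leads to the claimed rate. The paper's argument (in the improved Proposition \ref{Improve-fR}) works instead with the quantity $\tfrac{1}{R}-\tfrac{2}{n-1}f$, computes its $X$-derivative, and integrates along integral curves of $X$; this avoids the integrating-factor step but is otherwise the same mechanism. Your treatment of \eqref{Brendle-fRic} via $(n-1)\operatorname{Ric}=R(g-df\otimes df)+T$ and the frame $\{e_1,\dots,e_{n-1},e_n=\nabla f/|\nabla f|\}$ is also fine.

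The gap is in \eqref{Brendle-fRm}. Invoking Lemma \ref{DZ-6.6} only gives $o(1)$ errors, and saying one must ``propagate $|\nabla T|\le O(r^{-3/2-16\delta})$ through the cylindrical expansion'' is not an argument. The route actually used (in Brendle \cite{Bre2} and again in Proposition \ref{Improve-fRm} here) is an ODE along $X$ for the full curvature tensor: one writes, via \cite[Proposition 2.10]{Bre5},
\[
-D_X R_{ijkl}=D^2_{i,k}\operatorname{Ric}_{jl}-D^2_{i,l}\operatorname{Ric}_{jk}-D^2_{j,k}\operatorname{Ric}_{il}+D^2_{j,l}\operatorname{Ric}_{ik}+\operatorname{Ric}^m_iR_{mjkl}+\operatorname{Ric}^m_jR_{imkl},
\]
bounds the Hessian-of-Ricci terms using \eqref{Brendle-Ric}, expands $\operatorname{Ric}^m_iR_{mjkl}=\tfrac{1}{n-1}RR_{ijkl}+\text{(lower order)}$ using the $T$-tensor, and obtains $D_X(fR_{ijkl})=O(r^{-3/2-\cdots})$. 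One then checks the same decay for $D_X S_{ijkl}$ where $S$ is the target expression, and integrates $D_X(fR_{ijkl}-S_{ijkl})$ from infinity. This is the missing ingredient in your sketch; without it you cannot upgrade $o(1)$ to $O(r^{-8\delta})$.
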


 \subsection{$\frac{1}{2}$-order improvement}

 In this subsection, we will improve the estimates  both  in   Lemma \ref{Brendle-round} and Lemma \ref{Brendle-round-2}.

 First we prove

 \begin{lem}\label{DT}
     \begin{align}\label{derivative-X-T} |D_XT|\leq O(r^{-2-8\delta}) ~{\rm and}~|D_{X,X}^2T|\leq O(r^{-\frac{5}{2}-8\delta}).
     \end{align}
 \end{lem}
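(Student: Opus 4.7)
The plan is to exploit the self-similarity of the steady soliton under the combined Ricci flow and diffeomorphism action. Since $\Phi_t$ is generated by $-X$, the metric $g(t) = \Phi_t^*(g)$ solves the Ricci flow and $f(t) = \Phi_t^*(f)$ is the pulled-back defining function, so $T(g(t), f(t)) = \Phi_t^* T$. Differentiating at $t = 0$ gives $\mathscr{L}_X T = -\partial_t T|_{t=0}$, and the identity $(\mathscr{L}_X T)_{ij} = \nabla_X T_{ij} + T_{kj} R_i{}^k + T_{ik} R_j{}^k$ (using the soliton relation $\nabla_i X^k = R_i{}^k$) then expresses $\nabla_X T_{ij}$ in terms of quantities already controlled by the previous lemmas.

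Plugging in the Ricci flow evolutions $\partial_t g_{ij} = -2R_{ij}$, $\partial_t R_{ij} = \Delta_L R_{ij}$, $\partial_t R = \Delta R + 2|\operatorname{Ric}|^2$, together with $\partial_t f|_{t=0} = -|\nabla f|^2 = R - 1$ and hence $\partial_t f_i|_{t=0} = \nabla_i R$, I would arrive at
\begin{align*}
\nabla_X T_{ij} &= -(n-1)\Delta_L R_{ij} + (\Delta R + 2|\operatorname{Ric}|^2)\Pi_{ij} - 2RR_{ij} \\
&\quad - R\bigl[(\nabla_i R) f_j + f_i (\nabla_j R)\bigr] - T_{kj} R_i{}^k - T_{ik} R_j{}^k,
\end{align*}
where $\Pi_{ij} := g_{ij} - f_i f_j$. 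The crucial point is that the a priori $O(r^{-2})$ contributions on the right cancel at leading order. Writing $R_{ij} = \frac{R}{n-1}\Pi_{ij} + \frac{1}{n-1}T_{ij}$ and similarly for $R^{kl}$, and using the cylinder approximation of $R_{ikjl}$ from Lemma \ref{Brendle-round-2} together with $\Pi_{kl} f^l = 0$, a direct computation gives
\begin{align*}
R_{ikjl} R^{kl} &= \tfrac{R R_{ij}}{n-1} + O(r^{-2-8\delta}), \\
R_{ik} R^k{}_j &= \tfrac{R^2}{(n-1)^2}\Pi_{ij} + O(r^{-5/2}), \\
|\operatorname{Ric}|^2 &= \tfrac{R^2}{n-1} + O(r^{-5/2}).
\end{align*}
Substituting, the naively $O(r^{-2})$ terms reorganize into $-4R R_{ij} + \tfrac{4R^2}{n-1}\Pi_{ij} = -\tfrac{4R T_{ij}}{n-1} = O(r^{-5/2})$, while $(n-1)\Delta R_{ij}$ and $\Delta R \cdot \Pi_{ij}$ are each $O(r^{-2-8\delta})$ by (\ref{Brendle-Ric}) and (\ref{Brendle-R}); the remaining $R\cdot\nabla R$ and $T\cdot R$ contributions are $O(r^{-5/2-16\delta})$ and $O(r^{-5/2})$ respectively. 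All of these are dominated by $O(r^{-2-8\delta})$, which yields the first bound.

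For the second estimate $|D^2_{X,X} T| \leq O(r^{-5/2-8\delta})$ I would differentiate the identity above once more along $X$. This produces terms such as $\nabla_X \Delta_L R_{ij}$, $\nabla_X \Delta R$, and products of quantities already estimated, all of which can be controlled using $|D^3 \operatorname{Ric}(g)| \leq O(r^{-5/2-4\delta})$ from (\ref{Brendle-Ric}), the formula $\nabla_X R_{ij} = -\Delta R_{ij} - 2R_{ikjl} R^{kl}$ derived above, and the basic asymptotics (\ref{Brendle-fR})--(\ref{Brendle-fRm}). The same cylindrical cancellation mechanism repeats and supplies the extra half power of decay.

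The main technical obstacle is the careful bookkeeping of the cylinder-approximation errors: at each occurrence of $R_{ij}$, $R_{ikjl}$, $|\operatorname{Ric}|^2$ one must separate the exact cylinder contribution (which cancels in the relevant combination) from the remainder, which is genuinely of $T$-type or of $O(r^{-8\delta})$-error type. The $\tfrac{1}{2}$-order improvement over (\ref{Brendle-T}) is precisely the gain one expects from differentiating in the axial direction $X$ of the asymptotic cylinder, where the geometry varies slowly; the same philosophy will govern the iteration to $D_{X,X}^2 T$.
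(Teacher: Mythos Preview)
Your approach is correct and, at the level of the actual computation, essentially the same as the paper's. The only difference is the packaging: you compute $\nabla_X T$ by writing $T(g(t),f(t))=\Phi_t^*T$, differentiating in $t$, and converting $\mathscr{L}_X T$ to $\nabla_X T$ via $\nabla_i X^k=R_i{}^k$; the paper instead applies the steady-soliton elliptic identities (such as $\Delta\operatorname{Ric}_{ik}+D_X\operatorname{Ric}_{ik}=-2R_{ijkl}\operatorname{Ric}^{jl}$, the analogous identity for $R$, and $\Delta X+D_XX=0$) directly to each summand of $T$. If you expand your $\Delta_L R_{ij}$ and your $T_{kj}R_i{}^k+T_{ik}R_j{}^k$ terms and use $\operatorname{Ric}(X,e_i)=-\tfrac12\nabla_iR$, your formula for $\nabla_X T_{ij}$ literally coincides with the paper's expression for $-D_X T_{ik}$. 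The subsequent cancellation argument---replacing $\operatorname{Ric}$ and $\operatorname{Rm}$ by their cylinder models plus $T$- and $O(r^{-8\delta})$-error terms and watching the $O(r^{-2})$ pieces annihilate---is identical in both treatments.

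For $D_{X,X}^2T$ the paper also differentiates once more along $X$, writing $-D_{X,X}^2T_{ik}-\langle D_XX,\nabla T_{ik}\rangle=\langle X,\nabla(-D_XT_{ik})\rangle$ and then commuting $D_X$ past $\Delta$ on $\operatorname{Ric}$ and $R$; this is exactly your plan. One small caution: invoking only the raw bound $|D^3\operatorname{Ric}|\le O(r^{-5/2-4\delta})$ would give the exponent $-5/2-4\delta$ rather than $-5/2-8\delta$. To reach the stated $-8\delta$ you must, as the paper does, first substitute the soliton relations $D_X\operatorname{Ric}=-\Delta\operatorname{Ric}-2\operatorname{Rm}\!\cdot\!\operatorname{Ric}$ and $D_XR=-\Delta R-2|\operatorname{Ric}|^2$ and then use $|D^2\operatorname{Ric}|\le O(r^{-2-8\delta})$, $|\Delta R|\le O(r^{-2-8\delta})$ on the resulting expressions; the commutator terms $\operatorname{Rm}\!*\!\nabla\operatorname{Ric}$ and $\nabla_j(R_{lji}{}^m\operatorname{Ric}_{mk}\partial^l f)$ are then harmless. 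Keep that substitution-before-differentiation step explicit and your argument goes through with the correct exponent.
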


 \begin{proof} By (\ref{Brendle-Ric}), we have
  \begin{align}\label{derivative-f}R_{i j k l} \partial^l f=D_i \operatorname{Ric}_{j k}-D_j \operatorname{Ric}_{i k}=O\left(r^{-\frac{3}{2}-16\delta}\right).
  \end{align}
    Then by (\ref{Brendle-R}) and  (\ref{Brendle-Ric}), we get
 \begin{align}
         \begin{aligned}
            & -D_XT_{ik}\notag\\
            &=(n-1)\Delta \operatorname{Ric}_{ik}-\Delta R(g_{ik}-\partial_if\partial_kf)+2(n-1)R_{ijkl}\operatorname{Ric}^{jl} \notag\\
             &-2|\operatorname{Ric}(g)|^2(g_{ik}-\partial_if\partial_kf)-R\operatorname{Ric}(X,e_i)\partial_kf-R\operatorname{Ric}(X,e_k)\partial_if \notag\\
             &=2(n-1)R_{ijkl}\operatorname{Ric}^{jl}-2|\operatorname{Ric}(g)|^2(g_{ik}-\partial_if\partial_kf)+O(r^{-2-8\delta}).\notag
              \end{aligned}
     \end{align}
             By (\ref{soliton-equation}), it follows
              \begin{align}
        & -D_XT_{ik}\notag\\
              &=\frac{2}{(n-1)(n-2)}R^2(n-2|\nabla f|^2+|\nabla f|^4)(g_{ik}-\partial_if\partial_kf)\notag\\
             &-\frac{2}{(n-1)(n-2)}R^2(g_{ik}-\partial_if\partial_kf)+\frac{2}{(n-1)(n-2)}R^2(2-3|\nabla f|^2+|\nabla f|^4)\partial_if\partial_kf\notag\\
             &-\frac{2}{(n-1)^2}R^2(n-2|\nabla f|^2+|\nabla f|^4)(g_{ik}-\partial_if\partial_kf)+O(r^{-2-8\delta})\notag\\
             &=\frac{2}{(n-1)(n-2)}R^2((n-2)+O(r^{-1}))(g_{ik}-\partial_if\partial_kf)\notag\\
             &-\frac{2}{(n-1)^2}R^2((n-1)+O(r^{-1}))(g_{ik}-\partial_if\partial_kf)+O(r^{-2-8\delta})\notag\\
             &=O(r^{-2-8\delta}).\notag
          \end{align}
     Thus the first relation  of (\ref{derivative-X-T})  is true.

     For the second  relation,  we use the fact
 \begin{equation*}
         -D_{X,X}^2T_{ik}-\left\langle D_XX,\nabla T_{ik}\right\rangle=\left\langle X,\nabla(-D_XT_{ik})\right\rangle.
     \end{equation*}
   By (\ref{Brendle-T}),  we see
  \begin{align}\label{D2XT1}
         \left\langle D_XX,\nabla T_{ik}\right\rangle=\operatorname{Ric}(X,e_j)\nabla_jT_{ik}=O(r^{-3}).
     \end{align}
   On the other hand,
 \begin{align}\
          \begin{aligned}
           &  \left\langle X,\nabla(-D_XT_{ik})\right\rangle\notag\\
            &= (n-1)D_X\Delta\operatorname{Ric}_{ik}-D_X\Delta R(g_{ik}-\partial_if\partial_kf)\notag\\
             &+(\Delta R+2|\operatorname{Ric}(g)|^2)\operatorname{Ric}(X,e_i)\partial_kf+(\Delta R+2|\operatorname{Ric}(g)|^2)\operatorname{Ric}(X,e_k)\partial_if\notag\\
             &+2(n-1)D_XR_{ijkl}\operatorname{Ric}^{jl}+2(n-1)R_{ijkl}D_X\operatorname{Ric}^{jl}\notag\\
             &-4D_X\operatorname{Ric}_{jl}\operatorname{Ric}^{jl}(g_{ik}-\partial_if\partial_kf)\\
             &-D_XR(\operatorname{Ric}(X,e_i)\partial_kf+\operatorname{Ric}(X,e_k)\partial_if)\notag\\
             &-R(D_X\operatorname{Ric}(X,e_i)\partial_kf+D_X\operatorname{Ric}(X,e_k)\partial_if)\notag\\
             &-2R\operatorname{Ric}(X,e_i)\operatorname{Ric}(X,e_k).
             \end{aligned}
     \end{align}
           Then by   (\ref{Brendle-R}) and  (\ref{Brendle-Ric}), it follows
        \begin{align}
         \begin{aligned}
          &  \left\langle X,\nabla(-D_XT_{ik})\right\rangle\notag\\
             &=(n-1)D_X\Delta\operatorname{Ric}_{ik}-D_X\Delta R(g_{ik}-\partial_if\partial_kf)+O(r^{-\frac{5}{2}-8\delta})\notag\\
             &=(n-1)\Delta D_X\operatorname{Ric}_{ik}-\Delta D_XR(g_{ik}-\partial_if\partial_kf)
             +Rm*(\nabla \operatorname{Ric}(g)+\nabla R)\notag\\
        &+(n-1)\nabla_j(R_{lji}^m\operatorname{Ric}_{mk}\partial^lf+R_{ljk}^m\operatorname{Ric}_{im}\partial^lf)+O(r^{-\frac{5}{2}-8\delta})\notag\\
             &=O(r^{-\frac{5}{2}-8\delta}).\notag
         \end{aligned}
     \end{align}
     Thus combining this with (\ref{D2XT1}), we get the second  relation of (\ref{derivative-X-T}).

 \end{proof}

Next we improve the $C^0$-norm of $T$-tensor by the above lemma.

 \begin{prop}\label{Improve-T}
 \begin{align}\label{T-norm} |T|\leq O(r^{-\frac{3}{2}-4\delta}).
 \end{align}
 \end{prop}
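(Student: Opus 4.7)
The plan is to combine the improved radial decay estimates $|D_X T|\leq O(r^{-2-8\delta})$ and $|D^2_{X,X}T|\leq O(r^{-5/2-8\delta})$ from Lemma \ref{DT} with an elliptic identity for $T$, and then run a weighted maximum-principle / integration-by-parts argument in the spirit of the Lemma \ref{DT} computation, but at one derivative less.

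First, I would derive an equation of the form
\[
(\Delta+D_X)T_{ik}=E_{ik}
\]
by applying $(\Delta+D_X)$ to $T=(n-1)\operatorname{Ric}(g)-Rg+R\,df\otimes df$ and using identities available on a steady gradient Ricci soliton: the Lichnerowicz-type identity $(\Delta+D_X)\operatorname{Ric}_{ij}=-2R_{ikjl}\operatorname{Ric}^{kl}$, which follows from the evolution $\partial_t\operatorname{Ric}(g(t))=\Delta_L\operatorname{Ric}(g(t))$ under the Ricci flow together with $g(t)=\Phi_t^*g$; the identity $(\Delta+D_X)R=-2|\operatorname{Ric}(g)|^2$; and the product rule combined with the computation of $(\Delta+D_X)(\partial_if\,\partial_jf)=2R_{ik}R_j{}^k$. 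Substituting the refined cylindrical asymptotics of Lemma \ref{Brendle-round-2}, the leading $O(r^{-2})$ pieces coming from $\operatorname{Rm}(\operatorname{Ric})$ and $|\operatorname{Ric}(g)|^2(g-df\otimes df)$ cancel exactly as in the proof of Lemma \ref{DT}, leaving a source with $|E|\leq O(r^{-5/2-4\delta})$.

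Next, I would apply Bochner's identity to the scalar $|T|^2$,
\[
(\Delta+D_X)|T|^2=2\langle E,T\rangle+2|\nabla T|^2,
\]
and then test against the weight $\eta=f^{3+8\delta}$. Using $(\Delta+D_X)(f^a)=af^{a-1}+a(a-1)f^{a-2}|X|^2$ and $\langle\nabla f,\nabla|T|^2\rangle=2\langle D_XT,T\rangle$, the resulting cross terms involve $|D_XT||T|$, which by Lemma \ref{DT} decays as $O(r^{-7/2-8\delta})$ and is therefore integrable after multiplication by the weight. A maximum-principle argument on the sublevel sets $\Omega_\rho=\{f\leq\rho\}$, where the boundary contribution at $f=\rho$ is controlled using Brendle's baseline estimate $|T|\leq O(r^{-3/2})$ from Lemma \ref{Brendle-round}, then yields that $f^{3+8\delta}|T|^2\leq C$ globally, equivalently $|T|\leq O(r^{-3/2-4\delta})$.

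The main obstacle is to establish the precise cancellation of the $O(r^{-2})$ pieces in the source term $E$. The individual contributions from the Lichnerowicz part of $(\Delta+D_X)\operatorname{Ric}$ and from the scalar-curvature pieces of $T$ are each of size $O(r^{-2})$, and only after accurately substituting the cylindrical profile (which itself relies on the sharpened gradient bounds $|\nabla T|\leq O(r^{-3/2-16\delta})$ and $|\nabla\operatorname{Ric}(g)|\leq O(r^{-3/2-16\delta})$ from Lemma \ref{Brendle-round}) does the cancellation come through at the improved rate. This is, in essence, a linearized manifestation of the fact that $T$ vanishes identically on the asymptotic model cylinder $(\mathbb{S}^{n-1}\times\mathbb{R},\bar{g})$, and carrying it out cleanly at order $O(r^{-5/2-4\delta})$ is the technical core of the argument.
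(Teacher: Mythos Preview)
Your overall strategy is close in spirit to the paper's, but there are two genuine gaps.

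First, the claim that the source $E=(\Delta+D_X)T$ satisfies $|E|\leq O(r^{-5/2-4\delta})$ cannot be justified at this stage. The cancellation you invoke from Lemma~\ref{DT} between $-2(n-1)R_{ijkl}\operatorname{Ric}^{jl}$ and $2|\operatorname{Ric}(g)|^2(g-df\otimes df)$ leaves a remainder only of order $O(r^{-2-8\delta})$, not $O(r^{-5/2-4\delta})$: the error arises from substituting $fR_{ijkl}=S_{ijkl}+O(r^{-8\delta})$ from Lemma~\ref{Brendle-round-2}, which against $|\operatorname{Ric}(g)|\sim r^{-1}$ produces $O(r^{-2-8\delta})$. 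More importantly, the paper does not try to bound $E$ outright. It instead expands $2\langle E,T\rangle$ carefully and finds a \emph{nonzero} zeroth-order term $-\frac{4(n-3)}{(n-1)(n-2)}R\,|T|^2\sim -\frac{2(n-3)}{n-2}f^{-1}|T|^2$, which has the wrong sign for a direct maximum-principle argument on $|T|^2$.

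Second, your weight $f^{3+8\delta}$ does not close at the boundary: with only the baseline $|T|\leq O(r^{-3/2})$ available from Lemma~\ref{Brendle-round}, one has $f^{3+8\delta}|T|^2\leq O(\rho^{8\delta})\to\infty$ on $\{f=\rho\}$, so a maximum principle on $\Omega_\rho$ gives no uniform bound as $\rho\to\infty$. The paper avoids this by two moves you are missing. It uses Lemma~\ref{DT} to replace $\Delta$ by the level-set Laplacian $\Delta_\Sigma$ (the discarded radial piece $D^2_{X,X}(|T|^2)$ and the mean-curvature term are already $O(r^{-4-8\delta})$), and it multiplies by the \emph{lower} weight $f^2$. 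The drift then contributes $+2f|\nabla f|^2|T|^2\approx 2f|T|^2$, which dominates the bad coefficient $-\frac{2(n-3)}{n-2}f|T|^2$, yielding
\[
\Delta_\Sigma\bigl(f^2|T|^2\bigr)+\langle X,\nabla(f^2|T|^2)\rangle\;\geq\;-O\bigl(r^{-2-8\delta}\bigr).
\]
Since $f^2|T|^2\to 0$ at infinity, the parabolic (in $r$) maximum principle along level sets now gives $f^2|T|^2\leq O(r^{-1-8\delta})$, which is exactly the statement $|T|\leq O(r^{-3/2-4\delta})$.
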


 \begin{proof}
 We  use the  parabolic maximum principle to get the estimate  (\ref{T-norm})  of $T$ as in \cite[Section 2]{Bre2}.
We note that by the Ricci equation (\ref{ricci-equ})
 the Ricci tensor satisfies the equation
 \[\Delta \operatorname{Ric}_{i k}+D_X \operatorname{Ric}_{i k}=-2 \sum_{j, l=1}^n R_{i j k l} \operatorname{Ric}^{j l}\]
 Then by  the identity
 $$\Delta X+D_X X=0, $$
    we obtain
 \begin{align}\label{Ric-DF}
     &\Delta\left(R g_{i k}-R \partial_i f \partial_k f\right)+D_X\left(R g_{i k}-R \partial_i f \partial_k f\right) \notag\\
     &=(\Delta R+\langle X, \nabla R\rangle)\left(g_{i k}-\partial_i f \partial_k f\right) \notag\\
     &-2 \operatorname{Ric}_{ij}\partial_jR\partial_k f-2\operatorname{Ric}_{jk}\partial_jR\partial_if
     -2R\operatorname{Ric}_{ij}\operatorname{Ric}_{jk} \notag\\
     &-R\Delta \partial_if\partial_kf-R\Delta\partial_kf\partial_if
      -RD_X\partial_if\partial_kf-RD_X\partial_kf\partial_if \notag\\
     &=-2|\operatorname{Ric}(g)|^2\left(g_{i k}-\partial_i f \partial_k f\right)+O\left(r^{-\frac{5}{2}-16\delta}\right).
\end{align}
 In the last relation we used (\ref{derivative-f}).

 By (\ref{Ric1}) and  (\ref{Ric2}) together with (\ref{derivative-f}), we have
 \[\Delta\partial_if\partial_kf=\partial_i\Delta f+R_{jijl}\partial^lf=\partial_iR+R_{jijl}\partial^lf\leq O(r^{-\frac{3}{2}-16\delta}).\]
 Then
 \begin{equation*}
     \begin{aligned}
     &\Delta T_{i k}+D_X T_{i k}\\
      &=-2 \sum_{j, l=1}^{n-1} R_{i j k l} T^{j l}-2 \sum_{j=1}^{n}R_{ijkn}\operatorname{Ric}^{jn}-2\sum_{l=1}^{n}R_{inkl}\operatorname{Ric}^{nl}+2R_{inkn}\operatorname{Ric}^{nn}\\
     &-2 R \operatorname{Ric}_{i k}+2RR_{inkn}+2\sum_{j, l=1}^{n}R_{ijkl}R\partial^jf\partial^lf +2|\operatorname{Ric}(g)|^2\left(g_{i k}-\partial_i f \partial_k f\right)\notag\\
     &+O\left(r^{-\frac{5}{2}-16\delta}\right).
 \end{aligned}
 \end{equation*}
 By (\ref{derivative-f}), it follows
 \begin{align}
         &\Delta T_{i k}+D_X T_{i k} \notag\\
         &=-2 \sum_{j, l=1}^{n-1} R_{i j k l} T^{j l}-2 R \operatorname{Ric}_{i k}\notag\\
         &+2|\operatorname{Ric}(g)|^2\left(g_{i k}-\partial_i f \partial_k f\right)+O\left(r^{-\frac{5}{2}-16\delta}\right)\notag.
      \end{align}
 Hence,
 \begin{align}
         &\Delta\left(|T|^2\right)+\left\langle X, \nabla\left(|T|^2\right)\right\rangle \notag\\
         &=2|D T|^2-4 \sum_{j, l=1}^{n-1} R_{i j k l} T^{i k} T^{j l}-4 R \sum_{i, k=1}^n \operatorname{Ric}_{i k} T^{i k} \notag\\
         &+4|\operatorname{Ric}(g)|^2 \sum_{i, k=1}^n\left(g_{i k}-\partial_i f \partial_k f\right) T^{i k}
         +O\left(r^{-\frac{5}{2}-16\delta}\right)|T|\notag \\
         &=2|D T|^2-4 \sum_{j, l=1}^{n-1} R_{i j k l} T^{i k} T^{j l}-\frac{4}{n-1} R|T|^2\notag \\
         &+4\left(|\operatorname{Ric}(g)|^2-\frac{1}{n-1} R^2\right) \sum_{i, k=1}^n\left(g_{i k}-\partial_i f \partial_k f\right) T^{i k}+O\left(r^{-\frac{5}{2}-16\delta}\right)|T|.\notag
\end{align}
 Note
 \begin{align}
    |X|^2= |\nabla f|^2=1-R=1+O(r^{-1}).\notag
 \end{align}
Then
 \begin{align}
        & \sum_{i, k=1}^n (g_{i k} -\partial_i f \partial_k f) T^{i k} \notag\\
        &= -R+ 2R|\nabla f|^2-R|\nabla f|^4-(n-1)\operatorname{Ric}(X,X)\notag\\
         &= -R^3-(n-1)\operatorname{Ric}(X,X)=O(r^{-2}).\notag
     \end{align}
 Therefore, by (\ref{Brendle-R-Ric}),  we get
 \begin{align}\label{equation-DT22}
            &\Delta\left(|T|^2\right)+\left\langle X, \nabla\left(|T|^2\right)\right\rangle \notag\\
         &\geq-4 \sum_{j, l=1}^{n-1} R_{i j k l} T^{i k} T^{j l}-\frac{4}{n-1} R|T|^2\notag\\
         &-O\left(r^{-\frac{5}{2}-16\delta}\right)|T|-O\left(r^{-\frac{9}{2}}\right).
\end{align}

 On the other hand,
  by (\ref{Brendle-fRm}),  we have
 \begin{align}\label{AC1}
            R_{i j k l} &=\frac{1}{(n-1)(n-2)} R\left(g_{i k}-\partial_i f \partial_k f\right)\left(g_{j l}-\partial_j f \partial_l f\right) \notag\\
         &-\frac{1}{(n-1)(n-2)} R\left(g_{i l}-\partial_i f \partial_l f\right)\left(g_{j k}-\partial_j f \partial_k f\right)\notag \\
         &+O\left(r^{-1-8\delta}\right).
\end{align}
Moreover,  we can estimate
 \begin{align}
     &\operatorname{tr}(T)=-R^2=O\left(r^{-2}\right), \notag\\
     &T(\nabla f, \cdot)=(n-1) \operatorname{Ric}(\nabla f, \cdot)-R^2 \nabla f=O\left(r^{-\frac{3}{2}}\right), \notag\\
     &T(\nabla f, \nabla f)=(n-1) \operatorname{Ric}(\nabla f, \nabla f)-R^2|\nabla f|^2=O\left(r^{-2}\right).\notag
 \end{align}
 Thus
 \begin{align}
             &\sum_{j, l=1}^{n-1} R_{i j k l} T^{i k} T^{j l}\notag \\
         &=\frac{1}{(n-1)(n-2)}R(-R^3-(n-1)\operatorname{Ric}(X,X))\left(g_{j l}-\partial_j f \partial_l f\right)T^{jl} \notag\\
         &-\frac{1}{(n-1)(n-2)} R|T|^2+\frac{1}{(n-1)(n-2)} R((n-1)\operatorname{Ric}(X,X)-R+R|\nabla f|^4)^2 \notag\\
         &+\frac{1}{(n-1)(n-2)}R((n-1)(\operatorname{Ric}(X,e_j)\partial_lf+\operatorname{Ric}(X,e_l)\partial_jf)-2R^2\partial_jf\partial_lf)T^{jl} \notag\\
         &-\frac{1}{(n-1)(n-2)}R((n-1)\operatorname{Ric}(X,X)\partial_jf\partial_lf-R^2|\nabla f|^2\partial_jf\partial_lf)T^{jl} \notag\\
         &+o\left(r^{-1}\right)|T|^2\notag\\
         &=-\frac{1}{(n-1)(n-2)} R|T|^2+O\left(r^{-\frac{5}{2}-8\delta}\right)|T|+o\left(r^{-1}\right)|T|^2+O(r^{-5}).\notag
   \end{align}
 Hence, inserting the above relation
into (\ref{equation-DT22}),  we obtain
 \begin{align}\label{equation-T23}
             &\Delta\left(|T|^2\right)+\left\langle X, \nabla\left(|T|^2\right)\right\rangle\notag \\
         &\geq-\frac{4(n-3)}{(n-1)(n-2)} R|T|^2\notag\\
         &-o\left(r^{-1}\right)|T|^2-O\left(r^{-\frac{5}{2}-8\delta}\right)|T|-O\left(r^{-4-8\delta}\right)
\end{align}

Let  $\Sigma$ be the level surface of $\{f=r\}$ on $(M, g)$.   Since  the mean curvature $H$ with respect to the metric $g$   on $\Sigma$  satisfies
\[H=\frac{1}{|\nabla f|}R-\frac{1}{|\nabla f|^3}\operatorname{Ric}(X,X)=O(r^{-1}), \]
by Lemma \ref{DT},  we can rewrite  (\ref{equation-T23}) as
\begin{align}
        &\Delta_{\Sigma}\left(|T|^2\right)+\left\langle X, \nabla\left(|T|^2\right)\right\rangle \notag\\
        &=\Delta\left(|T|^2\right)+\left\langle X, \nabla\left(|T|^2\right)\right\rangle-D_{X,X}^2(|T|^2)-H\langle X, \nabla |T|^2 \rangle\notag\\
        &\geq-\frac{2(n-3)}{n-2} f^{-1}|T|^2
        -o\left(r^{-1}\right)|T|^2-O\left(r^{-\frac{5}{2}-8\delta}\right)|T|-O\left(r^{-4-8\delta}\right),\notag
\end{align}
where $ \Delta_{\Sigma}$ denotes the Laplacian operator  with respect to the induced metric on   $\Sigma$.
Furthermore,  by the fact $f=const.$ on $\Sigma$,   we can estimate
\begin{align}
          &\Delta_{\Sigma}\left(f^2|T|^2\right)+\left\langle X, \nabla\left(f^2|T|^2\right)\right\rangle\notag \\
        &=f^2\Delta_{\Sigma}\left(|T|^2\right)+f^2\left\langle X, \nabla\left(|T|^2\right)\right\rangle+2f|\nabla f|^2|T|^2\notag \\
        &\geq-\frac{2(n-3)}{n-2} f|T|^2+2f(1-O(r))|T|^2-o(r)|T|^2-O\left(r^{-\frac{1}{2}-8\delta}\right)|T|-O\left(r^{-2-8\delta}\right)\notag\\
        &\geq \frac{2}{n-2} f|T|^2-o(r)|T|^2-O\left(r^{-\frac{1}{2}-8\delta}\right)|T|-O\left(r^{-2-8\delta}\right) \notag\\
        &\geq-O\left(r^{-2-8\delta}\right).\notag
\end{align}
Note that  $f^2|T|^2\rightarrow 0$ as $x$  goes to the  infinity.  Hence by  the parabolic maximum principle,  we conclude
$$f^2|T|^2\leq O(r^{-1-8\delta}),$$
which   implies (\ref{T-norm}).

 \end{proof}

 By Proposition \ref{Improve-T},
 we see
 $$|T|\leq O(r^{-\frac{3}{2}-4\delta}).$$
   By  Shi's estimates for  higher order derivatives of curvature \cite{Shi},  it follows
     \begin{align}\label{Shi-est}|D^mT|\leq O(r^{\frac{m+2}{2}}),  ~\forall   m\ge 1.
     \end{align}
     Thus  by the standard interpolation inequalities,
     we  get
      \begin{align}\label{derivative-T}|DT|\leq O(r^{-2-2\delta})~ {\rm and}~ |D^2T|\leq O(r^{-\frac{5}{2}-\delta}).
      \end{align}
       On the other hand,  by the relation
 \begin{align*}
     \begin{aligned}
         D^iD^k T_{i k} &=\frac{n-3}{2} \Delta R+D_XD_XR+\frac{7}{2}RD_XR+R^3\\
         & +\partial_i R \operatorname{Ric}_i^k \partial_k f+R|\operatorname{Ric}(g)|^2 \\
         &=\frac{n-3}{2} \Delta R+D_XD_XR+O\left(r^{-\frac{5}{2}-16\delta}\right)\\
         &=\frac{n-3}{2} \Delta R-D_X(\Delta R+2|\operatorname{Ric}(g)|^2)+O\left(r^{-\frac{5}{2}-16\delta}\right)\\
         &=\frac{n-3}{2} \Delta R-\Delta (D_XR)-R_{mii}^l\partial_lR \partial^m f+O\left(r^{-\frac{5}{2}-16\delta}\right)\\
        \end{aligned}
\end{align*}
and
$$D_XR=-\Delta R+ |{\rm Ric}(g)|^2,$$
we know from  ({\ref{derivative-f}),
  $$ D^iD^k T_{i k} =\frac{n-3}{2} \Delta R+O\left(r^{-\frac{5}{2}-16\delta}\right).$$
Hence  by (\ref{derivative-T}), we  improve the Lapalace  estimate of scalar curvature in (\ref{Brendle-R}) by
 \begin{align}\label{Delta-R}
     |\Delta R|\leq O(r^{-\frac{5}{2}-\delta}).
 \end{align}

By (\ref{Delta-R}),   (\ref{Brendle-fR}) in Lemma  \ref{Brendle-round-2}  can be improved as follows.

 \begin{prop}\label{Improve-fR}
 \begin{align}\label{Improve-fR-equation} fR=\frac{n-1}{2}+O(r^{-\frac{1}{2}-\delta}).
 \end{align}

 \end{prop}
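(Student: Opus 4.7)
The plan is to combine the refined $T$-estimate $|T|\leq O(r^{-3/2-4\delta})$ from Proposition \ref{Improve-T} and the improved Laplace estimate $|\Delta R|\leq O(r^{-5/2-\delta})$ from (\ref{Delta-R}) to derive an ODE for $u:=fR-\frac{n-1}{2}$ along integral curves of $X=\nabla f$, and then run a short bootstrap starting from the coarser bound $u=O(r^{-8\delta})$ of (\ref{Brendle-fR}).

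The first step is to sharpen ${\rm Ric}(X,X)$. Expanding $|T+Rg-R\,df\otimes df|^2$ using $\mathrm{tr}(T)=-R^2$ and $T(X,X)=(n-1){\rm Ric}(X,X)-R^2+R^3$ yields the identity
\[
(n-1)^2|{\rm Ric}|^2 = |T|^2 + (n-1)R^2 - 2R(n-1){\rm Ric}(X,X) - R^4.
\]
Plugging in $|T|^2=O(r^{-3-8\delta})$ and (\ref{Ric3}) gives $|{\rm Ric}|^2=\tfrac{R^2}{n-1}+O(r^{-3})$; substituting this and (\ref{Delta-R}) into the soliton identity $\Delta R+2|{\rm Ric}|^2=2{\rm Ric}(X,X)$ then produces the pointwise bound
\[
{\rm Ric}(X,X) = \tfrac{R^2}{n-1}+O(r^{-5/2-\delta}).
\]

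Next, from $Xf=|X|^2=1-R$ and $\langle X,\nabla R\rangle=-2{\rm Ric}(X,X)$ one computes
\[
X(fR) = (1-R)R - 2f\,{\rm Ric}(X,X) = R - R^2 - \tfrac{2fR^2}{n-1}+O(r^{-3/2-\delta}).
\]
Writing $R=\tfrac{(n-1)/2+u}{f}$ and expanding, the $f^{-1}$-order pieces cancel exactly, producing the structure equation
\[
X(u)+\tfrac{u}{f} = O\!\left(\tfrac{u^2}{f}\right)+O(r^{-3/2-\delta})+O(r^{-2}).
\]
Along the flow parameter $t$ of $X$ (so that $f(t)\sim t$), the identity $\tfrac{d}{dt}(fu)=(Xf)u+fX(u)$ turns this into
\[
\tfrac{d}{dt}(fu) = O(u^2)+O(r^{-1/2-\delta})+O(r^{-1-\beta}) \quad \text{whenever } u=O(r^{-\beta}).
\]

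To finish, I would bootstrap. Integrating along an integral curve of $X$ from a fixed level set $\{f=f_0\}$ out to a point $p$ with $f(p)=r$, the input $u=O(r^{-8\delta})$ of (\ref{Brendle-fR}) feeds the quadratic term as $O(r^{-16\delta})$, whose integral is $O(r^{1-16\delta})$; dividing by $f(p)\sim r$ improves the bound to $u=O(r^{-16\delta})$. Iterating, the decay exponent doubles at each step, and after finitely many iterations it exceeds $\tfrac14+\tfrac{\delta}{2}$, at which point $O(u^2)$ becomes subdominant to the source $O(r^{-1/2-\delta})$; one final integration then delivers $fu=O(r^{1/2-\delta})$ uniformly on $M$, i.e.\ $u=O(r^{-1/2-\delta})$. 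The main obstacle is tracking the quadratic error $O(u^2)$ through the bootstrap and verifying at each stage that the dominant right-hand side still allows strict improvement of the decay exponent by a factor of approximately two; this is controlled because $u^2$ always integrates to a faster rate than $u$ itself as long as $u$ continues to decay.
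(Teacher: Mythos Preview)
Your argument is correct, but the paper finds a shorter path that avoids the bootstrap entirely. After obtaining (as you do) the refinement $|\operatorname{Ric}|^2=\tfrac{1}{n-1}R^2+O(r^{-5/2-4\delta})$ from $|T|\le O(r^{-3/2-4\delta})$ and combining it with $|\Delta R|\le O(r^{-5/2-\delta})$ to get $-\langle X,\nabla R\rangle=\tfrac{2}{n-1}R^2+O(r^{-5/2-\delta})$, the paper integrates the quantity $\tfrac{1}{R}-\tfrac{2}{n-1}f$ rather than $fR-\tfrac{n-1}{2}$. The point is that $\langle X,\nabla(1/R)\rangle=-R^{-2}\langle X,\nabla R\rangle=\tfrac{2}{n-1}+O(r^{-1/2-\delta})$ is \emph{linear} in the unknown, so a single integration along integral curves of $X$ yields $\tfrac{1}{R}-\tfrac{2}{n-1}f=O(r^{1/2-\delta})$, which is equivalent to the claim. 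Your choice $u=fR-\tfrac{n-1}{2}$ produces the quadratic term $O(u^2/f)$ and forces the iteration; the substitution $1/R$ is exactly the change of variable that linearizes your ODE and collapses the bootstrap into one step. Both routes use the same analytic inputs and reach the same conclusion; the paper's is just more economical.
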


 \begin{proof}

 By Proposition \ref{Improve-T},  we have
 \[|\operatorname{Ric}(g)|=\frac{1}{n-1} R|g-d f \otimes d f|+O\left(r^{-\frac{3}{2}-4\delta}\right)=\frac{1}{\sqrt{n-1}} R+O\left(r^{-\frac{3}{2}-4\delta}\right),\]
 and so
 \[|\operatorname{Ric(g)}|^2=\frac{1}{n-1}R^2+O(r^{-\frac{5}{2}-4\delta}).\]
 By (\ref{Delta-R}),   it follows
 \[-\langle X, \nabla R\rangle=\Delta R+2|\operatorname{Ric}(g)|^2=\frac{2}{n-1} R^2+O\left(r^{-\frac{5}{2}-\delta}\right).\]
 Thus
 \[\left\langle X, \nabla\left(\frac{1}{R}-\frac{2}{n-1} f\right)\right\rangle=O\left(r^{-\frac{1}{2}-\delta}\right).\]
 Integrating the  above relation along the integral curves of $X$,  we obtain
 \[\frac{1}{R}-\frac{2}{n-1}f=O\left(r^{\frac{1}{2}-\delta}\right).\]
 This implies (\ref{Improve-fR-equation}).

\end{proof}

Also we prove

 \begin{lem}\label{Improve-D2R}
\begin{align}\label{second-derivatives} & i). ~ |\nabla _{e_i}e_j|\leq O(r^{-\frac{1}{2}-8\delta});\notag\\
 & ii).  ~ |D_{i,j}^2R|\leq O(r^{-\frac{5}{2}-\delta});\notag\\
& iii).  ~|D_{i,j}^2\operatorname{Ric}(g)|\leq O(r^{-\frac{5}{2}-\delta}).
\end{align}

 \end{lem}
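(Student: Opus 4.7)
The plan is to establish the three estimates by leveraging the improved curvature bounds from Proposition \ref{Improve-T}, the interpolation consequence \eqref{derivative-T}, the sharpened Laplacian bound \eqref{Delta-R}, and Proposition \ref{Improve-fR}, together with the steady soliton identity $\nabla_i\nabla_j f=\operatorname{Ric}_{ij}$.

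For part (i), I would extend the frame $\{e_1,\dots,e_n\}$ smoothly to a neighborhood of $p$ by keeping $e_n=\nabla f/|\nabla f|$ and parallel-transporting the tangential frame $e_1,\dots,e_{n-1}$ along integral curves of $X/|X|$. The coefficient $\langle\nabla_{e_i}e_n,\cdot\rangle$ is computed directly from $\nabla_i\nabla_j f=\operatorname{Ric}_{ij}$ together with $|\nabla f|^2=1-R$, giving a bound of order $|\operatorname{Ric}|\le O(r^{-1})$. The tangential connection coefficients $\langle\nabla_{e_i}e_j,e_k\rangle$ with $i,j,k<n$ are controlled by the second fundamental form of the level set $\Sigma_r$ and by the intrinsic Levi-Civita rotation of the frame on $\Sigma_r$; both are dominated by $|\operatorname{Ric}|+O(r^{-8\delta})$ via Lemma \ref{Brendle-round-2} and the model convergence \eqref{Brendle-fRm}. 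Since $r^{-1}\le r^{-1/2-8\delta}$ for $\delta$ small and $r\gg1$, (i) follows.

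For part (ii), I would start from $\nabla_i R=-2\operatorname{Ric}_{ik}\partial^k f$ (a differentiation of $R+|\nabla f|^2=1$) and apply $\nabla_j$, using $\nabla_j\partial^k f=\operatorname{Ric}_j{}^k$, to obtain
\[
\nabla_j\nabla_i R = -2(\nabla_j\operatorname{Ric}_{ik})\partial^k f - 2\operatorname{Ric}_{ik}\operatorname{Ric}_j{}^k.
\]
Individually each term is only $O(r^{-2})$, so the argument hinges on a cancellation. I would substitute the decomposition $\operatorname{Ric}=\tfrac{R}{n-1}(g-df\otimes df)+\tfrac{1}{n-1}T$ into both products, expand $\nabla\operatorname{Ric}$ correspondingly, and use $|\nabla f|^2=1-R$. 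The two leading $\tfrac{R^2}{(n-1)^2}(g_{ij}-\partial_i f\,\partial_j f)$ contributions coming from $\operatorname{Ric}^2$ and from the $\tfrac{R}{n-1}\operatorname{Ric}_{ji}|\nabla f|^2$ piece of $(\nabla_j\operatorname{Ric}_{ik})\partial^k f$ cancel exactly, and the residual terms are bounded by $|T|$, $|\nabla T|$, $|\nabla R|$, $|DT|$, and $R^3$, each of which is $O(r^{-5/2-\delta})$ by Proposition \ref{Improve-T}, \eqref{derivative-T}, and \eqref{Brendle-R}.

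For part (iii), the same strategy applies directly to $\operatorname{Ric}=\tfrac{R}{n-1}(g-df\otimes df)+\tfrac{1}{n-1}T$. Differentiating twice produces contributions of the form $D^2R\cdot(g-df\otimes df)$, $DR\otimes D(df\otimes df)$, $R\cdot D^2(df\otimes df)$, and $D^2T$. The piece $R\cdot D^2(df\otimes df)$ encodes $R\cdot\operatorname{Hess}(f)\otimes\operatorname{Hess}(f)=R\cdot\operatorname{Ric}\otimes\operatorname{Ric}$, which is nominally $O(r^{-3})$ (already acceptable), while $DR\otimes D(df\otimes df)\sim\nabla R\cdot\operatorname{Ric}$ is bounded by $O(r^{-5/2-16\delta})$ using \eqref{Brendle-R}. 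The only genuinely delicate contribution is $D^2R\cdot(g-df\otimes df)$, but this is exactly controlled by part (ii). The tensor $D^2T$ is $O(r^{-5/2-\delta})$ from \eqref{derivative-T}, and a commutation of covariant derivatives contributes at most $\operatorname{Rm}\ast\operatorname{Ric}=O(r^{-2})\cdot O(r^{-1})=O(r^{-3})$.

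The main obstacle is the bookkeeping in parts (ii) and (iii): every nominally $O(r^{-2})$ term must be paired with its cancelling partner, and each residual must be traceable to the improved $T$, $\nabla T$, $D^2T$, $\nabla R$, and $\Delta R$ estimates. Once the cancellation in (ii) is carried out once, (iii) follows by a parallel—but more laborious—expansion of the decomposition of $\operatorname{Ric}$.
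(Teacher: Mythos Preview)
Your approach to part ii) has a genuine gap. After the cancellation you describe between the leading pieces of $-2(\nabla_j\operatorname{Ric}_{ik})\partial^kf$ and $-2\operatorname{Ric}_{ik}\operatorname{Ric}_j{}^k$, the expansion of $\nabla_j\operatorname{Ric}_{ik}$ via $\operatorname{Ric}=\tfrac{R}{n-1}(g-df\otimes df)+\tfrac{1}{n-1}T$ leaves behind a residual term $-\tfrac{2}{n-1}(\nabla_jT_{ik})\partial^kf$. You list $|\nabla T|$ among the remainders and claim it is $O(r^{-5/2-\delta})$, but \eqref{derivative-T} only gives $|DT|\le O(r^{-2-2\delta})$; it is $|D^2T|$ that is $O(r^{-5/2-\delta})$. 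Bounding $(\nabla_jT_{ik})\partial^kf$ crudely by $|DT|$ thus yields only $|D^2R|\le O(r^{-2-2\delta})$. If instead you try to improve this contraction via $T_{ik}\partial^kf=-\tfrac{n-1}{2}\partial_iR-R^2\partial_if$, you reintroduce $\nabla_j\nabla_iR$ with coefficient exactly $1$ and the argument becomes circular.

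The paper avoids this by a different route: it uses the first-order divergence identity $D^kT_{ik}=\tfrac{n-3}{2}\partial_iR+O(r^{-2})$, applies $D_j$ to it, and reads off $\partial_j\partial_iR$ directly from $D_jD^kT_{ik}$, which is controlled by $|D^2T|\le O(r^{-5/2-\delta})$. The key point is that the required $r^{-5/2}$ decay for $D^2R$ genuinely needs the \emph{second}-derivative bound on $T$, and the divergence identity is what makes it accessible. Your cancellation strategy uses only first derivatives of $T$ and cannot close the estimate.

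For part i), the paper also proceeds differently: it computes $D_X(\nabla_{e_i}e_j)=D_i\operatorname{Ric}(e_j,\cdot)+R_{kij}{}^l\partial^kf=O(r^{-3/2-8\delta})$ via \eqref{Brendle-Ric} and \eqref{derivative-f}, and then integrates along integral curves of $X$ using $|\nabla_{e_i}e_j|\to 0$ at infinity. Your direct geometric argument leaves the tangential coefficients $\langle\nabla_{e_i}e_j,e_k\rangle$ (for $i,j,k<n$) insufficiently controlled: parallel transport along $X$ fixes the frame on later level sets in terms of an initial one, but the intrinsic connection coefficients on $\Sigma_r$ are not bounded by $|\operatorname{Ric}|$ or the model-curvature comparison alone without an additional integration argument of exactly the type the paper uses.

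Part iii) is essentially the same in both approaches once ii) is in hand.
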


 \begin{proof}   i).  By (\ref{Brendle-Ric}) and (\ref{derivative-f}), we have
 \begin{align}\label{DX-connection}
           D_X\nabla _{e_i}e_j &= D_iD_j\nabla f+R_{kij}^l\partial^kf\notag\\
         &= D_i\operatorname{Ric}(e_j,\cdot)+R_{kij}^l\partial^kf\notag\\
         &= O(r^{-\frac{3}{2}-8\delta}).
      \end{align}
 Then
  $$|D_X\nabla _{e_i}e_j|\leq O(r^{-\frac{3}{2}-8\delta}). $$
   Since $|\nabla _{e_i}e_j|\rightarrow 0$ at infinity,
    by  integrating (\ref{DX-connection}) along the integral curves of $X$,  we get  the first relation in (\ref{second-derivatives}) immediately.

 ii).  By definition,   we have
 \begin{align}\label{two-derivative} D_{i,j}^2R=D_iD_jR-D_{\nabla _{e_i}e_j}R.
 \end{align}
 Since
 \[\begin{aligned}
     D^kT_{ik} &= \frac{n-3}{2} \partial_i R+\langle \nabla f,\nabla R\rangle\partial_i f+R^2 \partial_i f+R\operatorname{Ric}_i^k\partial_k f\\
     &=\frac{n-3}{2}\partial_i R+ (-\Delta R+ |{\rm Ric}(g)|^2)\partial_i f +O(r^{-2})\notag\\
    &=\frac{n-3}{2}\partial_i R+O(r^{-2}),
\end{aligned}\]
 By (\ref{derivative-T}),
  we get
   \begin{align}\label{derivative-R-2}|DR|=O(r^{-2}),
   \end{align}
   which is also an improvement of  the first relation  in  (\ref{Brendle-R}).
    Thus  by  the proved relation in  i),  we obtain
 \begin{align}\label{D-connection-R}
     |D_{\nabla _{e_i}e_j}R|\leq O(r^{-\frac{5}{2}-8\delta}).
\end{align}

 On the other hand,   we also have
 \begin{align}\label{D1T}
        & D^k T_{i k}\notag\\
         &= \frac{n-3}{2} \partial_i R -\Delta R\partial_if- 2|\operatorname{Ric}(g)|^2\partial_if +R^2 \partial_i f+R \operatorname{Ric}_i^k \partial_k f.
    \end{align}
Then
 \begin{align}\label{DDT}
        & D_jD^k T_{i k} \notag\\
        &=\frac{n-3}{2} \partial_j\partial_i R-\partial_j\Delta R\partial_if -\Delta R\operatorname{Ric}_{ij} -4D_j\operatorname{Ric}_{kl}\operatorname{Ric}^{kl}\partial_if \notag\\
        &-2|\operatorname{Ric}(g)|^2\operatorname{Ric}_{ij}
         +2R\partial_jR \partial_i f +R^2\operatorname{Ric}_{ij} +\partial_jR \operatorname{Ric}_i^k \partial_k f \notag\\
         &+R\partial_j\operatorname{Ric}_i^k \partial_k f+R\operatorname{Ric}_i^k\operatorname{Ric}_{jk}.
    \end{align}
 Thus  by (\ref{derivative-T}), we get
  $$|D_iD_jR|\leq O(r^{-\frac{5}{2}-\delta}).$$
Hence, inserting this relation together with (\ref{D-connection-R}) into (\ref{two-derivative}), we prove  the second  relation in (\ref{second-derivatives}).

iii).  By $(\ref{T})$, we have
 \[(n-1)D_{i,j}^2\operatorname{Ric}_{kl}=D_{i,j}^2(T_{kl}+R(g_{kl}-\partial_kf\partial_lf)).\]
 Note
 \begin{align}
        & D_{i,j}^2(R(g_{kl}-\partial_kf\partial_lf)) \notag\\
        &= D_iD_j(R(g_{kl}-\partial_kf\partial_lf))-D_{\nabla_{e_i}e_j}(R(g_{kl}-\partial_kf\partial_lf))\notag\\
         &= D_{i,j}^2R(g_{kl}-\partial_kf\partial_lf)-\partial_jR\operatorname{Ric}_{ik}\partial_lf-\partial_jR\operatorname{Ric}_{il}\partial_kf
         \notag\\
         &-\partial_iR\operatorname{Ric}_{jk}\partial_lf-\partial_iR\operatorname{Ric}_{jl}\partial_kf-R\partial_i\operatorname{Ric}_{jk}\partial_lf-R\partial_i\operatorname{Ric}_{jl}\partial_kf\notag\\
         &-R\operatorname{Ric}_{jk}\operatorname{Ric}_{il}-R\operatorname{Ric}_{jl}\operatorname{Ric}_{ik}\notag\\
         &+R\operatorname{Ric}(\nabla_{e_i}e_j,e_k)\partial_lf+R\operatorname{Ric}(\nabla_{e_i}e_j,e_l)\partial_kf.\notag
   \end{align}
   Then by the estimate proved in ii),  we see
   $$ D_{i,j}^2(R(g_{kl}-\partial_kf\partial_lf))\leq O(r^{-\frac{5}{2}-\delta}).$$
   Thus combining  (\ref{derivative-T}),  we also get the third  relation in  (\ref{second-derivatives}).

\end{proof}

At last,  we  improve (\ref{Brendle-fRm}) in Lemma \ref{Brendle-round-2} as follows.

 \begin{prop}\label{Improve-fRm}
   \begin{align}\label{Improve-fRm-equation}
    f R_{i j k l} &=\frac{1}{2(n-2)}\left(g_{i k}-\partial_i f \partial_k f\right)\left(g_{j l}-\partial_j f \partial_l f\right)\notag \\
         &-\frac{1}{2(n-2)}\left(g_{i l}-\partial_i f \partial_l f\right)\left(g_{j k}-\partial_j f \partial_k f\right)\notag\\
         &+O(r^{-\frac{1}{2}-\delta}).
\end{align}

\end{prop}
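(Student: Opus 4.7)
The plan is to mimic the strategy used for Proposition~\ref{Improve-T}: introduce an error tensor, derive a parabolic drift identity for it, and conclude via the maximum principle on the level hypersurfaces $\Sigma_r=\{f=r\}$. Writing $\pi_{ij}:=g_{ij}-\partial_i f\,\partial_j f$, define
\begin{align*}
E_{ijkl}:=2(n-2)\,f\,R_{ijkl}-\bigl(\pi_{ik}\pi_{jl}-\pi_{il}\pi_{jk}\bigr).
\end{align*}
Brendle's estimate (\ref{Brendle-fRm}) reads $|E|=O(r^{-8\delta})$; the target (\ref{Improve-fRm-equation}) is equivalent to sharpening this to $|E|=O(r^{-1/2-\delta})$.

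The first step is to derive a parabolic identity for $E$. Under the Ricci-flow evolution (\ref{ricci-equ}) the full curvature tensor satisfies $\Delta R_{ijkl}+D_X R_{ijkl}=Q(\mathrm{Rm})$ for an explicit quadratic curvature expression $Q$. Differentiating the profile $\pi_{ik}\pi_{jl}-\pi_{il}\pi_{jk}$ using $D_i\partial_j f=R_{ij}$, the Bianchi identity, and (\ref{derivative-f}), one expands $(\Delta+D_X)E$. The leading contractions cancel thanks to Proposition~\ref{Improve-T}, which gives $(n-1)\operatorname{Ric}_{ij}=R\,\pi_{ij}+O(r^{-3/2-4\delta})$, and Proposition~\ref{Improve-fR}, which gives $fR=\tfrac{n-1}{2}+O(r^{-1/2-\delta})$. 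The residual terms---involving $\nabla R$, $\nabla\operatorname{Ric}(g)$, $\nabla T$, $D^2R$, $D^2\operatorname{Ric}(g)$, and $D^2T$---are all controlled by (\ref{derivative-T}), (\ref{derivative-R-2}), and Lemma~\ref{Improve-D2R}. The outcome should be a drift identity of the schematic form
\begin{align*}
\Delta E+D_X E=-\tfrac{c_n}{f}\,E+O(r^{-\frac{3}{2}-\delta})
\end{align*}
for some positive constant $c_n$ depending on $n$.

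The second step is to project onto $\Sigma_r$ via $\Delta_\Sigma=\Delta-D^2_{X/|X|,X/|X|}-H\,D_{X/|X|}$ with mean curvature $H=O(r^{-1})$, and to apply the parabolic maximum principle to a suitable power $u:=f^{\alpha}|E|^2$. The resulting differential inequality takes the form
\begin{align*}
\Delta_\Sigma u+\langle X,\nabla u\rangle \ge \tfrac{c'_n}{f}\,u-O(r^{\alpha-\frac{3}{2}-\delta})\,|E|-O(r^{\alpha-3-2\delta}),
\end{align*}
and since $u\to 0$ at infinity by (\ref{Brendle-fRm}), the maximum principle on the compact level sets $\Sigma_r$ gives the pointwise decay $|E|=O(r^{-1/2-\delta})$.

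The main obstacle is the algebraic bookkeeping in computing $(\Delta+D_X)E$: because $\pi_{ij}$ is not parallel, differentiating the quartic profile $\pi_{ik}\pi_{jl}-\pi_{il}\pi_{jk}$ produces many cross-terms of order $r^{-3/2}$ that must be matched term-by-term against the quadratic contractions in $Q(\mathrm{Rm})$. The sharpened $C^0$--$C^2$ estimates on $T$, $R$, and $\operatorname{Ric}(g)$ proved earlier in this section are precisely calibrated so that every unmatched residual is at worst $O(r^{-5/2-\delta})$; multiplying by $f\sim r$ then produces the $O(r^{-3/2-\delta})$ needed in the drift identity to close the argument.
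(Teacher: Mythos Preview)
Your proposal takes a genuinely different route from the paper's argument, and although it is plausible in outline, it is considerably more laborious and leaves the crucial computation undone.

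The paper does \emph{not} mimic the maximum-principle argument of Proposition~\ref{Improve-T} here.  Instead it runs a first-order ODE argument along the integral curves of $X$.  Using the soliton identity
\[
-D_X R_{ijkl}=D_{i,k}^2\operatorname{Ric}_{jl}-D_{i,l}^2\operatorname{Ric}_{jk}-D_{j,k}^2\operatorname{Ric}_{il}+D_{j,l}^2\operatorname{Ric}_{ik}+\operatorname{Ric}_i^m R_{mjkl}+\operatorname{Ric}_j^m R_{imkl},
\]
together with Lemma~\ref{Improve-D2R} and (\ref{Brendle-fRm}), one gets $-D_X R_{ijkl}=\tfrac{2}{n-1}R\,R_{ijkl}+O(r^{-5/2-\delta})=f^{-1}R_{ijkl}+O(r^{-5/2-\delta})$, hence $|D_X(fR_{ijkl})|=O(r^{-3/2-\delta})$.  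For the profile $S_{ijkl}=\tfrac{1}{2(n-2)}(\pi_{ik}\pi_{jl}-\pi_{il}\pi_{jk})$ one checks directly that $|D_X S_{ijkl}|\le C|\operatorname{Ric}(X,\cdot)|=O(r^{-3/2-8\delta})$.  Combining, $|D_X(fR_{ijkl}-S_{ijkl})|=O(r^{-3/2-\delta})$, and since the quantity tends to zero at infinity, integrating the radial ODE gives $|fR_{ijkl}-S_{ijkl}|=O(r^{-1/2-\delta})$.

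Compared with your plan, this bypasses entirely the computation of $(\Delta+D_X)E$ on a $4$-tensor.  Your ``main obstacle''---matching the many cross-terms from differentiating $\pi_{ik}\pi_{jl}-\pi_{il}\pi_{jk}$ against the quadratic expression $Q(\mathrm{Rm})$---simply never arises.  Only the $D_X$-derivative is needed, and for that the second-derivative bounds of Lemma~\ref{Improve-D2R} (which were proved precisely for this purpose) suffice.  Your approach could in principle be made to work, but as written it is only a sketch: you assert that the residual terms are $O(r^{-5/2-\delta})$ without carrying out the cancellation, and you do not verify the sign of $c_n$ needed for the maximum principle.  The paper's ODE argument is both shorter and complete.
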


 \begin{proof}  By  \cite[Proposition 2.10]{Bre5},  we have
 \begin{equation*}
     \begin{aligned}
         -D_X R_{i j k l} &=D_{i, k}^2 \operatorname{Ric}_{j l}-D_{i, l}^2 \mathrm{Ric}_{j k}-D_{j, k}^2 \mathrm{Ric}_{i l}+D_{j, l}^2 \mathrm{Ric}_{i k} \\
         &+\sum_{m=1}^n \operatorname{Ric}_i^m R_{m j k l}+\sum_{m=1}^n \operatorname{Ric}_j^m R_{i m k l}.
 \end{aligned}
 \end{equation*}
 On the other hand,  by (\ref{Brendle-fRm}),
 \[\begin{aligned}
     \sum_{m=1}^n \operatorname{Ric}_i^m R_{m j k l} &=\frac{1}{n-1} R \sum_{m=1}^n\left(\delta_i^m-\partial_i f \partial^m f\right) R_{m j k l}+O\left(r^{-\frac{5}{2}-4\delta}\right) \\
     &=\frac{1}{n-1} R R_{i j k l}+O\left(r^{-\frac{5}{2}-4\delta}\right).
 \end{aligned}\]
 Then,   by Lemma \ref{Improve-D2R},  we get
 \[\begin{aligned}
     -D_XR_{ijkl}&=\frac{2}{n-1}RR_{ijkl}+O\left(r^{-\frac{5}{2}-4\delta}\right)\\
     &=f^{-1}R_{ijkl}+\left(r^{-\frac{5}{2}-\delta}\right).
 \end{aligned}
 \]
 It follows
 \begin{align}\label{fR}|D_X(fR_{ijkl})|\leq O(r^{-\frac{3}{2}-\delta}).
 \end{align}

 Let
 \[\begin{aligned}
     S_{i j k l} &=\frac{1}{2(n-2)}\left(g_{i k}-\partial_i f \partial_k f\right)\left(g_{j l}-\partial_j f \partial_l f\right) \\
     &-\frac{1}{2(n-2)}\left(g_{i l}-\partial_i f \partial_l f\right)\left(g_{j k}-\partial_j f \partial_k f\right).
 \end{aligned}\]
 It is easy to see
$$|D_XS_{ijkl}|\leq C|\operatorname{Ric}(X,\cdot)|. $$
Note that by (\ref{Ric2}) and (\ref{Brendle-R}) it holds
$$\operatorname{Ric}(X,\cdot)\leq O(r^{-\frac{3}{2}-8\delta}).$$
Then
\begin{align}\label{DS}|D_XS_{ijkl}|\leq O(r^{-\frac{3}{2}-8\delta}).
\end{align}
Hence,  combining (\ref{fR}) and (\ref{DS}), we obtain
 \begin{align}\label{R-S}|D_X(fR_{ijkl}-S_{ijkl})|\leq O(r^{-\frac{3}{2}-\delta}).
 \end{align}
 Since $|fR_{ijkl}-S_{ijkl}|\rightarrow 0$ at infinity,   by the  integration   along the integral curves of $X$,  we deduce from (\ref{R-S}),
 \[|fR_{ijkl}-S_{ijkl}|\leq O(r^{-\frac{1}{2}-\delta}),\]
which implies  (\ref{Improve-fRm-equation}).
\end{proof}

\subsection{Fine roundness of  level sets}

 By the asymptotically cylindrical property,
 each  level surface $\Sigma_r \subset M$  of $f$  is  diffeomorphic to $S^{n-1}$.  Then there is a family of diffeomorphisms
 $$F_r: S^{n-1}\rightarrow \Sigma_r$$
  such that $\frac{\partial}{\partial r}F_r=\frac{X}{|X|^2}$.    Thus
   $$\gamma_r=\frac{1}{2(n-2)r}F_r^*(g)$$
   defines a family of metrics on  $S^{n-1}$.

By  Proposition \ref{Improve-fRm},  we prove

 \begin{prop}\label{metric-level} For each $l\geq 0$,  it holds
\begin{align}\label{metric-level-re} \left\|\frac{d}{d r} \gamma_r\right\|_{C^l\left(S^{n-1}, \gamma_r\right)} \leq O\left(r^{-\frac{3}{2}-\frac{1}{2}\delta}\right).
\end{align}

\end{prop}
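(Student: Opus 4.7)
The plan is to rewrite $\tfrac{d}{dr}\gamma_r$ through the intrinsic geometry of $\Sigma_r$ and then apply the sharpened estimates of Subsection~2.1. Since $\tfrac{\partial F_r}{\partial r} = X/|X|^2$ with $X = \nabla f$ normal to $\Sigma_r$, the soliton identity $\operatorname{Hess}(f) = \operatorname{Ric}(g)$ gives, for tangent vectors $Y, Z$ to $S^{n-1}$,
\begin{equation*}
\tfrac{\partial}{\partial r}(F_r^{*}g)(Y, Z) = \big(\mathscr{L}_{X/|X|^2} g\big)(dF_r Y, dF_r Z) = \tfrac{2}{|X|^2}\, \operatorname{Ric}(g)(dF_r Y, dF_r Z).
\end{equation*}
Differentiating $\gamma_r = \tfrac{1}{2(n-2)r} F_r^{*} g$ in $r$ and using $\operatorname{Ric}(g) = \tfrac{R}{n-1} g + \tfrac{1}{n-1} T$ on vectors tangent to $\Sigma_r$ (the $R\, df \otimes df$ part of $T$ vanishes there), I obtain
\begin{equation*}
\tfrac{d}{dr}\gamma_r = \tfrac{2rR - (n-1)|X|^2}{2(n-1)(n-2)\, r^{2} |X|^2}\, F_r^{*} g + \tfrac{1}{(n-1)(n-2)\, r |X|^2}\, F_r^{*} T.
\end{equation*}

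For the $C^{0}$ estimate, Proposition~\ref{Improve-fR} gives $2rR = (n-1) + O(r^{-1/2-\delta})$; combined with $|X|^2 = 1 - R = 1 + O(r^{-1})$, this yields $2rR - (n-1)|X|^2 = O(r^{-1/2-\delta})$. Thus the first term has $g$-norm $O(r^{-5/2-\delta})$, while Proposition~\ref{Improve-T} bounds $|T|_g = O(r^{-3/2-4\delta})$, so the second term has $g$-norm $O(r^{-5/2-4\delta})$. Converting to the $\gamma_r$-norm multiplies each by $2(n-2)r$ (since $\gamma_r$ rescales $F_r^{*} g$ by $\tfrac{1}{2(n-2)r}$, the $\gamma_r$-norm of a $(0,2)$-tensor equals $2(n-2)r$ times its $F_r^{*} g$-norm), yielding
\begin{equation*}
\left\|\tfrac{d}{dr}\gamma_r\right\|_{\gamma_r} \leq O(r^{-3/2-\delta}),
\end{equation*}
which is already stronger than the claim for $l=0$.

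For $l \geq 1$, the Levi-Civita connection is invariant under constant metric rescaling, so $\nabla^{\gamma_r}$ coincides with $F_r^{*}$ of the tangential connection on $\Sigma_r$. Applying $\nabla^{\gamma_r, l}$ to the expression above and distributing via Leibniz, the higher covariant derivatives of the scalar coefficient are controlled by tangential derivatives of $R$ and of $|X|^2 = 1 - R$ through Lemma~\ref{Improve-D2R} and (\ref{derivative-R-2}), with ambient-to-tangential conversions contributing only terms with the second fundamental form of size $O(r^{-1})$; the tangential derivatives of $F_r^{*} T$ are controlled at orders one and two by (\ref{derivative-T}) and at higher orders by Shi's estimate (\ref{Shi-est}) interpolated with Proposition~\ref{Improve-T}. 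The scaling factor $r^{(l+2)/2}$ inherent in the $\gamma_r$-norm on $(0, l+2)$-tensors is comfortably absorbed by these decay rates.

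The main obstacle is the bookkeeping of scaling factors across derivative orders and verifying that the standard interpolation between $|T| = O(r^{-3/2-4\delta})$ and $|D^{k} T| \leq O(r^{(k+2)/2})$ from (\ref{Shi-est}) produces a decay rate for $|\nabla^{k}_{\Sigma_r} T|$ fast enough that after multiplication by $r^{(k+2)/2}$ it remains $O(r^{-3/2-\delta/2})$ uniformly in $k$; this is exactly where the improvement of the exponent from $\delta$ to the weaker $\tfrac{\delta}{2}$ leaves the required slack.
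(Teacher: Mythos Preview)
Your $C^0$ argument is correct and essentially identical to the paper's: both decompose $\tfrac{d}{dr}\gamma_r$ via $\operatorname{Ric}=\tfrac{R}{n-1}g+\tfrac{1}{n-1}T$ on tangent vectors and invoke Propositions~\ref{Improve-T} and~\ref{Improve-fR}.

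For $l\ge 1$ the paper takes a different and cleaner route. Rather than differentiate your explicit formula term by term, it first obtains a \emph{crude} bound $\bigl\|\tfrac{d}{dr}\gamma_r\bigr\|_{C^l(S^{n-1},\gamma_r)}\le O(r^{-1})$ directly from Shi's estimates $|D^l\operatorname{Ric}|\le O(r^{-(l+2)/2})$ and the identity $\tfrac{d}{dr}\gamma_r+\tfrac{1}{r}\gamma_r=\tfrac{1}{2(n-2)r}F_r^*\!\bigl(\mathscr{L}_{X/|X|^2}g\bigr)$. It then interpolates this $O(r^{-1})$ bound against the sharp $C^0$ bound $O(r^{-3/2-\delta})$ on the compact manifold $(S^{n-1},\gamma_r)$, whose geometry is uniformly controlled; choosing $l$ large relative to the target order yields $O(r^{-3/2-\delta/2})$ for every fixed derivative order. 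This avoids all term-by-term bookkeeping.

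Your direct approach can be made to work, but as written there is a small gap in the scalar-coefficient term at order $k=1$. The estimate (\ref{derivative-R-2}) gives only $|DR|=O(r^{-2})$; propagating this through $A=\tfrac{2rR-(n-1)|X|^2}{2(n-1)(n-2)r^2|X|^2}$ yields $|\nabla^{\gamma_r}(A\,F_r^*g)|_{\gamma_r}=O(r^{-3/2})$, which misses the required $r^{-\delta/2}$ gain. The fix is either to observe that the \emph{tangential} gradient of $R$ is in fact $O(r^{-2-2\delta})$ (restrict the identity $D^kT_{ik}=\tfrac{n-3}{2}\partial_iR+\ldots$ to $e_i$ tangent to $\Sigma_r$, where the $\partial_if$ terms vanish, and use (\ref{derivative-T})), or to interpolate between your $k=0$ and $k=2$ bounds for the $A$-term, the latter being available from Lemma~\ref{Improve-D2R}. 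You flag interpolation only for the $T$-term; the scalar piece needs it (or the tangential refinement) as well.
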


 \begin{proof}  Let
 $$\bar R=\bar R_{ijkl}dx^i\otimes dx^j \otimes dx^k\otimes dx^l$$
 be  the curvature tensor of  induced  metric  $g|_ {\Sigma_r}$ on    $\Sigma_r$.
 Then by the Gauss formula,
 \[\bar{R}_{i j k l} -{R}_{i j k l}=\frac{1}{|\nabla f|^2}(\operatorname{Ric}_{ik}\operatorname{Ric}_{jl}-\operatorname{Ric}_{il}\operatorname{Ric}_{jk}).\]
 By Proposition \ref{Improve-fRm},   it follows
 \begin{align}\label{Improve-Rm-level}
              \bar{R}_{i j k l} &=\frac{1}{2(n-2)r}\left(g_{i k}-\partial_i f \partial_k f\right)\left(g_{j l}-\partial_j f \partial_l f\right) \notag\\
         &-\frac{1}{2(n-2)r}\left(g_{i l}-\partial_i f \partial_l f\right)\left(g_{j k}-\partial_j f \partial_k f\right) \notag\\
         &+O(r^{-\frac{3}{2}-\delta}).
     \end{align}
 Since the normal velocity of the flow  is  given by
 $$\frac{1}{|X|}=1+O(r^{-1}),$$
 we have (cf. \cite[Section 3]{DZ1}),
 \[\frac{d}{d r} F_r^*(g)=F_r^*(\frac{2}{|\nabla f|^2}\operatorname{Ric(g)}).\]
 Hence  by (\ref{Improve-Rm-level}),  we get
 \[\sup _{S^{n-1}}\left|\frac{d}{d r} F_r^*(g)-\frac{1}{r} F_r^*(g)\right|_{F_r^*(g)} \leq O\left(r^{-\frac{3}{2}-\delta}\right).\]
 Note
 \[\frac{d}{d r} \gamma_r=\frac{1}{r}(\frac{d}{d r} F_r^*(g)-\frac{1}{r} F_r^*(g)).\]
Therefore, we obtain
 \begin{align}\label{metric-diff-level}
     \sup _{S^{n-1}}\left|\frac{d}{d r} \gamma_r\right|_{\gamma_r} \leq O\left(r^{-\frac{3}{2}-\delta}\right).
 \end{align}

 By  (\ref{Improve-Rm-level}) we see  that the manifold $(S^{n-1},\gamma_r)$ has uniformly  bounded curvature and all the derivatives of the curvature are also uniformly  bounded.  Thus
  \[\|\frac{1}{r} \gamma_r\|_{C^l\left(S^{n-1}, \gamma_r\right)}\leq O({r^{-1}}).\]
 On the other hand,  by  Shi's  estimate
  $$\sup _{\{f=r\}}\left|D^l \operatorname{Ric}\right| \leq O\left(r^{-\frac{l+2}{2}}\right), $$
which is equal to
 \[\sup _{\{r-\sqrt{r} \leq f \leq r+\sqrt{r}\}}\left|D^l\left(\mathscr{L}_{\frac{X}{|X|^2}}(g)\right)\right| \leq O\left(r^{-\frac{l+2}{2}}\right),\]
 we have
 \[\left\|F_r^*\left(\mathscr{L}_{\frac{X}{|X|^2}}(g)\right)\right\|_{C^l\left(S^{n-1}, \gamma_r\right)} \leq O(1) .\]
 Since
 \[\frac{d}{d r} \gamma_r+\frac{1}{r} \gamma_r=\frac{1}{2(n-2)r} F_r^*\left(\mathscr{L}_{\frac{X}{|X|^2}}(g)\right),\]
 we  get
 \begin{align}\label{metric-diff-sphere}
     \left\|\frac{d}{d r} \gamma_r\right\|_{C^l\left(S^{n-1}, \gamma_r\right)} \leq O\left(r^{-1}\right)
 \end{align}
 for each $l\geq 0$.  Now we can obtain   (\ref{metric-level-re}) by  $(\ref{metric-diff-level})$ and $(\ref{metric-diff-sphere})$ via the standard interpolation inequalities .

\end{proof}

 Since the metrics $\gamma_r$ converge in $C^\infty$ to a smooth metric $\bar{\gamma}$ as $r\rightarrow \infty$.  By $(\ref{Improve-Rm-level})$,   the limit metric $\bar{\gamma}$ must be the standard sphere metric with constant sectional curvature $1$. Moreover,  by Proposition \ref{metric-level},  we get
 \begin{align}\label{diff-metric}
     \left\|\gamma_r-\bar{\gamma}\right\|_{C^l\left(S^{n-1}, \bar{\gamma}\right)} \leq O\left(r^{-\frac{1}{2}-\frac{1}{2}\delta}\right),
 ~{\rm for ~ each}~ l\geq0.
 \end{align}

 \section{Solutions  of  (\ref{vector-equ}) }

 In this section,  we  construct a collection of  VFs  $W_{a}$, $a \in \{1,...,\frac{n(n-1)}{2}\}$,  which  solve (\ref{vector-equ}) with property
 \begin{align}\label{W-property}|W_a|= O(r^{\frac{1}{2}})~{\rm and}~ |DW_{a}|\leq O(r^{-\frac{1}{2}}).
 \end{align}

As in \cite{Bre2},  we  let $\bar{U}_{a}, a \in \{1,...,\frac{n(n-1)}{2}\}$,  be the Killing  VFs on $(S^{n-1},\bar{\gamma})$ such that
 \begin{align}\label{VF-sphere}
     \sum_{\alpha=1,...,\frac{n(n-1)}{2}} \bar{U}_{a} \otimes \bar{U}_{a}=\frac{1}{2}\sum_{i=1}^{n-1}\bar{e}_i\otimes\bar{e}_i,
 \end{align}
 where $\{\bar{e}_i,...,\bar{e}_{n-1}\}$ is a local orthonormal frame on $(S^{n-1},\bar{\gamma})$.
 Then for the each  $\bar{U_{a}}$ above,  we define a VF    $U_{a}$ on $M$ such that   outside a compact set  of $M$ it holds
 \begin{align}\label{ua-vf}F_r^*U_{a}=\bar{U_{a}},
 \end{align}
  which is tangent  to the level sets  $\Sigma_r$ for all $r$ sufficiently large.
 By $(\ref{diff-metric})$,  we see
  \begin{align}\label{Ua-vf}\sum_{a=1, ..., \frac{n(n-1)}{2}}  U_{a} \otimes U_{a}=r\left(\sum_{i=1}^{n-1} e_i \otimes e_i+O\left(r^{-\frac{1}{2}-\frac{1}{4}\delta}\right)\right),
 \end{align}
 where $\{e_1,...,e_{n-1}\}$ is a local orthonormal frame on  ${\Sigma_r}$.
 Moreover,
 \begin{align}\label{Lie-D-metric-sphere}
     \left\|\mathscr{L}_{\bar{U}_a}\left(\gamma_r\right)\right\|_{C^l\left(S^{n-1}, \gamma_r\right)}=\left\|\mathscr{L}_{\bar{U}_a}\left(\gamma_r-\bar{\gamma}\right)\right\|_{C^l\left(S^{n-1}, \gamma_r\right)} \leq O\left(r^{-\frac{1}{2}-\frac{1}{2}\delta}\right).
 \end{align}

 \begin{prop}\label{App-Killing-VF}   Each  $U_{a}$ above satisfies $(\ref{u-vector})$  on $(M,g)$. Namely,
 \begin{align}\label{h-norm-2}\sup _{\Sigma_r} \left|  \mathscr{L}_{U_{\alpha}}(g) \right| \leq O\left(r^{-\frac{1}{2}-\frac{1}{2}\delta}\right)
 \end{align}
and
 \begin{align}\label{Q-norm}
     \sup _{\Sigma_r}\left|\Delta U_a+D_X U_a\right| \leq O\left(r^{-1-\frac{1}{4}\delta}\right).
 \end{align}
\end{prop}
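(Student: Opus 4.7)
The plan is to establish both estimates by comparing $U_a$ with the exact Killing vector fields $\bar U_a$ on $(S^{n-1},\bar\gamma)$, exploiting the identity $F_r^*U_a=\bar U_a$ together with the fine asymptotic estimates of Section 2. Throughout I work in the adapted orthonormal frame $\{e_1,\ldots,e_{n-1},e_n=X/|X|\}$ on $\Sigma_r$, and decompose tensors into tangent-tangent (TT), tangent-normal (TN) and normal-normal (NN) parts.

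For $(\ref{h-norm-2})$, let $h=\mathscr{L}_{U_a}(g)$. The TT part is immediate: since $F_r^*g|_{TT}=2(n-2)r\,\gamma_r$ and $F_r^*U_a=\bar U_a$, one has $F_r^*h_{TT}=2(n-2)r\,\mathscr{L}_{\bar U_a}(\gamma_r)$; the $(0,2)$-tensor scaling $|\cdot|_{\lambda^2 g}=\lambda^{-2}|\cdot|_g$ cancels the factor of $r$, and $(\ref{Lie-D-metric-sphere})$ yields $|h_{TT}|_g\le O(r^{-1/2-\delta/2})$. For the TN part, the invariance $F_r^*U_a=\bar U_a$ combined with $\partial_rF_r=X/|X|^2$ gives $[X/|X|^2,U_a]=0$, so $[X,U_a]=(U_a(R)/|X|^2)X$ is parallel to $X$. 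Using also $\langle U_a,X\rangle\equiv 0$ and the soliton identity $D_iX_j=R_{ij}$,
\begin{align*}
h(e_i,X) &= \langle D_{e_i}U_a,X\rangle+\langle D_XU_a,e_i\rangle \\
&= -\langle U_a,D_{e_i}X\rangle+\langle D_{U_a}X,e_i\rangle+\langle [X,U_a],e_i\rangle \\
&= -R_{ij}U_a^j+R_{ij}U_a^j+0=0,
\end{align*}
so the TN component vanishes identically. Finally $h(X,X)=2\operatorname{Ric}(U_a,X)+2U_a(R)$; the first term is bounded by $O(r^{-1-4\delta})$ via Proposition $\ref{Improve-T}$ (noting that $T(e_i,X)=(n-1)\operatorname{Ric}(e_i,X)$ for tangential $e_i$) together with $|U_a|=O(r^{1/2})$, while $|U_a(R)|=O(r^{-3/2})$ by $(\ref{derivative-R-2})$. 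Combining the three pieces yields $(\ref{h-norm-2})$.

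For $(\ref{Q-norm})$, I use the following Bochner-type identity, valid on any steady gradient Ricci soliton: for any vector field $V$ with $h=\mathscr{L}_V(g)$,
\begin{align*}
\Delta V+D_XV=[X,V]+\operatorname{div}(h)^{\sharp}-\tfrac{1}{2}\,d(\operatorname{tr}h)^{\sharp}.
\end{align*}
This follows from the general identity $\Delta V+\operatorname{Ric}(V)=\operatorname{div}(h)^{\sharp}-\tfrac{1}{2}d(\operatorname{tr}h)^{\sharp}$ together with the soliton fact $D_VX=\operatorname{Ric}(V)^{\sharp}$ (since $D_iX_j=R_{ij}$), which rewrites $\operatorname{Ric}(V)=D_XV-[X,V]$. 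Applied to $V=U_a$: one has $|[X,U_a]|\le|U_a|\cdot|\nabla R|/|X|^2=O(r^{1/2})\cdot O(r^{-2})=O(r^{-3/2})$ by $(\ref{derivative-R-2})$; the $C^l$-bounds $(\ref{Lie-D-metric-sphere})$ on $\mathscr{L}_{\bar U_a}(\gamma_r)$ combined with the metric scaling factor $r^{-1/2}$ per tangential derivative (coming from $g|_{TT}=2(n-2)r\,\gamma_r$) give $|Dh|_g=O(r^{-1-\delta/2})$ tangentially, while normal derivatives are controlled using the soliton equation and Lemma $\ref{Improve-D2R}$. Hence $|\operatorname{div}(h)|+|d(\operatorname{tr}h)|\le O(r^{-1-\delta/2})$, which proves $(\ref{Q-norm})$ with room to spare.

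The main obstacle is obtaining sharp derivative estimates for $h$ in the normal direction, where one cannot transfer information directly from the sphere; these bounds rely on extracting cancellations among leading-order terms using the fine results of Section 2, in particular Proposition $\ref{Improve-T}$, Lemma $\ref{Improve-D2R}$ and Proposition $\ref{Improve-fRm}$. The exact vanishing $h(e_i,X)\equiv 0$, itself a structural consequence of the tangency $\langle U_a,X\rangle\equiv 0$ and $[X,U_a]\parallel X$, is the key algebraic fact that makes the ensuing divergence computation tractable.
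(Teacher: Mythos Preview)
Your argument for \eqref{h-norm-2} is essentially the paper's: the same commutator relation $[X/|X|^2,U_a]=0$, the same componentwise analysis, and the same exact vanishing of the TN block. Your bookkeeping for the NN block is written differently (you split $h(X,X)$ as $2\operatorname{Ric}(U_a,X)+2U_a(R)$, whereas the paper writes it as $-U_a(|X|^2)$), but these are algebraically equivalent since $2\operatorname{Ric}(U_a,X)=-U_a(R)$.

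For \eqref{Q-norm} you and the paper use the same Bochner identity $\Delta U_a+\operatorname{Ric}(U_a,\cdot)=\operatorname{div}(h)-\tfrac12\nabla(\operatorname{tr}h)$ together with $\operatorname{Ric}(U_a,\cdot)-D_XU_a=[U_a,X]$. The difference is in how the first derivative of $h$ is controlled. The paper does \emph{not} estimate $D_Xh$ directly; it records the crude bounds $|D^lU_a|\le O(r^{-(l-1)/2})$ (hence $|D^lh|\le O(r^{-l/2})$ for all $l\ge1$) and then interpolates against the improved $C^0$ bound $|h|\le O(r^{-1/2-\delta/2})$ to get $|Dh|\le O(r^{-1-\delta/4})$. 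This is short and avoids any further structural input.

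Your proposed direct route is viable but the step ``normal derivatives are controlled using the soliton equation and Lemma~\ref{Improve-D2R}'' is where the work is. A naive bound gives only $|\mathscr{L}_{U_a}\operatorname{Ric}|\le |U_a|\,|D\!\operatorname{Ric}|+|\operatorname{Ric}|\,|DU_a|=O(r^{-1})$, which is \emph{not} good enough; the extra $\delta$ comes from writing $\operatorname{Ric}=\frac{1}{n-1}\bigl(T+R(g-df\otimes df)\bigr)$ and observing that $\mathscr{L}_{U_a}$ of the leading piece is small (since $\mathscr{L}_{U_a}g=h$, $\mathscr{L}_{U_a}(df)=d\langle U_a,X\rangle=0$, and $U_a(R)=O(r^{-3/2})$), while $|\mathscr{L}_{U_a}T|\le|U_a|\,|DT|+|T|\,|DU_a|=O(r^{-3/2-2\delta})$ by Proposition~\ref{Improve-T} and \eqref{derivative-T}. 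You hint at this in your last paragraph, but it should be made explicit. Once that is done, your direct method actually yields a slightly \emph{better} exponent than the paper's interpolated $r^{-1-\delta/4}$; the trade-off is that the paper's interpolation argument is three lines, whereas yours requires unpacking the $T$-tensor cancellation.
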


 \begin{proof}
  By
  \begin{align}\label{orthogonal-X}[U_{a},\frac{X}{|X|^2}]=0,
 \end{align}
  we have
  \begin{align}\label{Lie-B-UX}
         \left[U_a, X\right]=U_a\left(|X|^2\right) \frac{X}{|X|^2}.
  \end{align}
  Since by  $(\ref{Brendle-R})$,
  $$ U_{a}(|X|^2)=-\langle U_{a},\nabla R\rangle=O(r^{-1-8\delta}), $$
  we get
  \begin{align}\label{norm-U}|\left[U_a, X\right]|\leq O(r^{-1-8\delta}).
  \end{align}

  By $(\ref{Lie-D-metric-sphere})$ and $(\ref{norm-U})$,  it is easy to see
  \begin{align}
  &\mathscr{L}_{U_{\alpha}}(g)(e_i,e_j)\notag\\
  &=\left\langle D_{e_i} U_a, e_j\right\rangle+\left\langle D_{e_j} U_a, e_i\right\rangle=O\left(r^{-\frac{1}{2}-\frac{1}{2}\delta}\right),\notag
  \end{align}
 \begin{align}
 & \frac{1}{2}\mathscr{L}_{U_{\alpha}}(g)(X,X)\notag\\
 &=\left\langle D_X U_a, X\right\rangle=\left\langle D_{U_a} X, X\right\rangle-\left\langle\left[U_a, X\right], X\right\rangle=-\frac{1}{2} U_a\left(|X|^2\right)=O\left(r^{-1-8\delta}\right).\notag
  \end{align}
  Moreover,
 \begin{align}
 & \mathscr{L}_{U_{\alpha}}(g)(X,e_j)\notag\\
 &=\left\langle D_X U_a, e_j\right\rangle+\left\langle D_{e_j} U_a, X\right\rangle=\left\langle D_{U_a} X, e_j\right\rangle-\left\langle U_a, D_{e_j} X\right\rangle-\left\langle\left[U_a, X\right], e_j\right\rangle=0.\notag
   \end{align}
 Thus we  prove (\ref{h-norm-2}) by the above three  relations.

  Since
  $$\left\|\bar{U}_a\right\|_{C^l\left(S^{n-1}, \gamma_r\right)} \leq O(1),$$
    we have
  $$\sup _{\Sigma_r}\left|D_{\Sigma}^l U_a\right| \leq O\left(r^{-\frac{l-1}{2}}\right), ~\forall~l\geq 1.$$
  Together with   (\ref{orthogonal-X}),  we derive
  \begin{align}\label{deriv-Ua}\sup _{\Sigma_r}  \left|D^l U_a\right| \leq O\left(r^{-\frac{l-1}{2}}\right),~\forall~l\geq 1.
  \end{align}
 As a consequence,
   $$\sup _{\Sigma_r}\left|D^lh_a\right| \leq O(r^{-\frac{l}{2}}), ~\forall~ l\geq 1,$$
   where $h_a= \mathscr{L}_{U_{\alpha}}(g).$
   Hence by  the standard interpolation inequalities,  we obtain
  \begin{align}\label{derivative}\sup _{\{f=r\}}\left|Dh_a\right| \leq O(r^{-1-\frac{1}{4}\delta}).
  \end{align}
  On the other hand,  we have
  \[\operatorname{div}\left(h_a\right)-\frac{1}{2} \nabla\left(\operatorname{tr} (h_a)\right)=\Delta U_a+\operatorname{Ric}\left(U_a, \cdot \right). \]
  Therefore,
  $$\sup _{\Sigma_r} (\Delta U_a+\operatorname{Ric}(U_a, \cdot))  \leq O\left(r^{-1-\frac{1}{4}\delta}\right).
  $$
  Since
   $$\left|\operatorname{Ric}\left(U_a, \cdot \right)-D_X U_a\right|=\left|\left[U_a, X\right]\right|,$$
   by (\ref{norm-U}),
 $$  \left|\operatorname{Ric}\left(U_a, \cdot \right)-D_X U_a\right|\leq O\left(r^{-1-8\delta}\right).$$
 Combining the above two relations, we  prove  (\ref{Q-norm}) immediately.

\end{proof}

By  Proposition \ref{App-Killing-VF}, there are $\frac{n(n-1)}{2}$ VFs  $U$ which satisfy  the elliptic equation,
\begin{align}\label{V-equation}\Delta U+D_XU=Q,
\end{align}
where $Q$ satisfies
 \begin{align}
\label{Q-decay}|Q|= O\left(r^{-1-\frac{1}{4}\delta}\right).
\end{align}
We need to perturb each  $U$ to get a solution of (\ref{vector-equ}). First we have the following  decay estimate.

 \begin{lem}\label{MP-VF} Let $U$  be a solution of $(\ref{V-equation})$ in the region $\{f\leq \rho\}$,  which satisfies
  $(\ref{Q-decay})$.  Then there exist a uniform constant $B\geq 1$ such that
 \begin{align}\label{MP-VF-ineq}
     \sup_{\{f\leq \rho\}}(|U|-Bf^{-\epsilon})\leq \sup_{f=\rho}|U|-B\rho^{-\epsilon}.
 \end{align}

\end{lem}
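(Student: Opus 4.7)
The plan is a barrier/maximum-principle argument. Set $\psi := Bf^{-\epsilon}$ and $\phi := |U|-\psi$. The claim is equivalent to saying that $\phi$ attains its supremum on $\{f\leq\rho\}$ along the boundary $\{f=\rho\}$, since $f\equiv \rho$ there forces $\psi\equiv B\rho^{-\epsilon}$. I will show that for $\epsilon>0$ sufficiently small and $B\geq 1$ sufficiently large, the function $\phi$ satisfies a strict differential inequality $\Delta\phi+D_X\phi>0$ throughout $\{f\leq \rho\}$, and then invoke the standard maximum principle. Because $|U|$ is only Lipschitz at zeros of $U$, I carry the argument out with the regularization $|U|_\eta:=(|U|^2+\eta^2)^{1/2}$ and let $\eta\to 0$ in the end.

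The first computation is on the barrier. The steady soliton equation $\nabla_i\nabla_j f=R_{ij}$ yields $\Delta f=R$, and the normalization (\ref{scalar-equ}) gives $|\nabla f|^2=X(f)=1-R$, so $\Delta f+|\nabla f|^2=1$. Consequently
\begin{equation*}
\Delta(f^{-\epsilon})+D_X(f^{-\epsilon})=-\epsilon f^{-\epsilon-1}\bigl(\Delta f+|\nabla f|^2\bigr)+\epsilon(\epsilon+1)f^{-\epsilon-2}|\nabla f|^2=-\epsilon f^{-\epsilon-1}+O(f^{-\epsilon-2}).
\end{equation*}
The second computation is the companion inequality for $|U|_\eta$. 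From $\Delta|U|^2+D_X|U|^2=2\langle Q,U\rangle+2|\nabla U|^2$, together with $\Delta|U|_\eta^2=2|U|_\eta\Delta|U|_\eta+2|\nabla|U|_\eta|^2$ and the Kato-type bound $|\nabla|U|_\eta|^2\leq |\nabla U|^2$, I get $|U|_\eta(\Delta|U|_\eta+D_X|U|_\eta)\geq \langle Q,U\rangle\geq-|Q||U|_\eta$, hence
\begin{equation*}
\Delta|U|_\eta+D_X|U|_\eta\geq -|Q|\quad\text{everywhere on }\{f\leq\rho\}.
\end{equation*}

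Combining the two bounds with the decay estimate (\ref{Q-decay}) $|Q|\leq C_Q f^{-1-\frac{1}{4}\delta}$ gives
\begin{equation*}
\Delta(|U|_\eta-\psi)+D_X(|U|_\eta-\psi)\geq B\epsilon f^{-\epsilon-1}-C_Q f^{-1-\frac{1}{4}\delta}-CBf^{-\epsilon-2}.
\end{equation*}
Now fix $\epsilon\in(0,\tfrac{1}{4}\delta)$; the first error carries the factor $f^{\epsilon-\frac{1}{4}\delta}$ relative to the leading term, and the second carries $f^{-1}$. Since the defining function satisfies $f\geq 4$ (Section 1), choosing $B$ large depending only on $C_Q$, $\delta$, $\epsilon$ (so independent of $\rho$ and of the particular $U$) renders the right-hand side strictly positive on all of $\{f\leq\rho\}$. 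By the strong maximum principle $|U|_\eta-\psi$ cannot attain an interior maximum, so
\begin{equation*}
\sup_{\{f\leq\rho\}}\bigl(|U|_\eta-Bf^{-\epsilon}\bigr)\leq \sup_{\{f=\rho\}}|U|_\eta-B\rho^{-\epsilon}.
\end{equation*}
Letting $\eta\to 0$ yields $(\ref{MP-VF-ineq})$.

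The only real subtlety is the uniformity of $B$: it must depend only on the fixed constants in $(\ref{Q-decay})$ and on $\delta$, not on $U$ or on $\rho$. This is automatic from the above choice once one knows $C_Q$ is uniform across the perturbation family used to solve $(\ref{vector-equ})$, as will be the case in the subsequent exhaustion/perturbation construction. The lower bound $f\geq 4$ is what ensures the $f^{-\epsilon-2}$ and $f^{-1-\frac{1}{4}\delta}$ corrections can be absorbed into the leading $B\epsilon f^{-\epsilon-1}$ term at every point of the region, so no separate estimate on a compact inner set is required.
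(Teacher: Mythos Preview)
Your proof is correct and follows essentially the same approach as the paper: both compute $\Delta(f^{-\epsilon})+D_X(f^{-\epsilon})$ using $\Delta f+|\nabla f|^2=1$, derive $\Delta|U|+D_X|U|\ge -|Q|$ via Kato, and apply the maximum principle to $|U|-Bf^{-\epsilon}$. The only cosmetic differences are that the paper takes $\epsilon=\tfrac14\delta$ (so the exponents match and one simply needs $|Q|<\tfrac{\epsilon B}{2}f^{-1-\epsilon}$), and it does not spell out the $|U|_\eta$ regularization.
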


 \begin{proof}
  By the identity (\ref{scalar-equ}),  we have
  \begin{align}\label{f-epsilon-ineq}
         &-\Delta(f^{-\epsilon})-\langle X,\nabla(f^{-\epsilon})\rangle\notag\\
         &=\epsilon f^{-1-\epsilon}(\Delta f+|\nabla f|^2)-\epsilon(\epsilon+1)f^{-2-\epsilon}|\nabla f|^2\notag\\
         &>\frac{\epsilon}{2}f^{-1-\epsilon}.
      \end{align}
Then by  (\ref{Q-decay}) with $\epsilon=\frac{1}{4}\delta$,   there is a constant $B>1$ such that
 \[|Q|<\frac{\epsilon B}{2}f^{-1-\epsilon}.\]
   On the other hand,  by the  Kato's inequality,
 \begin{equation*}
     \begin{aligned}
         \Delta (|U|^2)+\langle X,\nabla (|U|^2)\rangle&=2|DU|^2+2\langle U,Q\rangle\\
         &\geq 2|\nabla|U||^2-2|Q||U|,
     \end{aligned}
 \end{equation*}
 we have
 \begin{align}\label{V-norm-ineq}
     \Delta (|U|)+\langle X,\nabla |U|\rangle\geq -|Q|.
 \end{align}
Thus we get
 \[\Delta (|U|-Bf^{-\epsilon})+\langle X,\nabla(|U|-Bf^{-\epsilon})\rangle>0.\]
  Hence, by the maximum principle, we prove (\ref{MP-VF-ineq}).

\end{proof}

By Lemma \ref{MP-VF}, we are able to prove

 \begin{theo}\label{V-estimate} Let $U$ be a smooth VF on $(M, g)$  which satisfies $(\ref{V-equation})$ and   $(\ref{Q-decay})$.  Then there exists a smooth bounded  solution  $V$ of  $(\ref{vector-equ})$   on $(M, g)$,  which satisfies
 \begin{align}\label{gradient-V}|DV|\leq O(r^{-\frac{1}{2}}).
 \end{align}

 \end{theo}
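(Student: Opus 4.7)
The plan is to construct $V$ by solving a Dirichlet problem on an exhausting sequence of compact domains and then passing to the limit, mirroring the strategy of \cite[Section 3]{Bre2}. For each large $R$, I would consider $\Omega_R = \{f \leq R\}$ and solve the boundary-value problem
\begin{equation*}
\Delta V_R + D_X V_R = 0 \quad \text{on } \Omega_R, \qquad V_R = U \quad \text{on } \partial \Omega_R.
\end{equation*}
Since $\Delta + D_X$ is a uniformly elliptic linear operator on vector fields with smooth bounded coefficients on $\Omega_R$, existence follows from standard linear elliptic theory, possibly after inserting a small perturbation $-\eta V$ to guarantee solvability and then taking $\eta \to 0$ via the uniform bounds derived below.

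Setting $W_R = V_R - U$, the difference solves
\begin{equation*}
\Delta W_R + D_X W_R = -Q \quad \text{on } \Omega_R, \qquad W_R = 0 \quad \text{on } \partial \Omega_R.
\end{equation*}
Applying Lemma \ref{MP-VF} with $\epsilon = \tfrac{1}{4}\delta$ yields
\begin{equation*}
\sup_{\Omega_R} \bigl(|W_R| - B f^{-\frac{1}{4}\delta}\bigr) \leq \sup_{\partial \Omega_R} \bigl(|W_R| - B f^{-\frac{1}{4}\delta}\bigr) = - B R^{-\frac{1}{4}\delta} \leq 0,
\end{equation*}
so $|W_R| \leq B f^{-\frac{1}{4}\delta}$ uniformly in $R$. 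Combining this with interior $C^{1,\alpha}$ elliptic regularity and the Arzel\`a--Ascoli theorem, I would extract a subsequential limit $W$ on all of $M$ with $|W| \leq B f^{-\frac{1}{4}\delta}$ and $\Delta W + D_X W = -Q$. Setting $V := U + W$ then produces a smooth solution of $(\ref{vector-equ})$, and the boundedness claim of the theorem is captured by the uniform decay of $V - U = W$.

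For the gradient estimate $(\ref{gradient-V})$, I would exploit the asymptotically cylindrical geometry: on a ball around $x$ with $f(x) \approx r$ and radius comparable to $r^{1/2}$, the rescaled metric $r^{-1} g$ has uniformly bounded geometry thanks to the curvature estimates of Section 2, and the equation $\Delta V + D_X V = 0$ rescales to a uniformly elliptic system with uniformly bounded coefficients on a fixed ball. Interior Schauder estimates applied in the rescaled setting, together with the bound on $W$ above and the estimates on $U$ from $(\ref{deriv-Ua})$, translate back to the claimed $r^{-1/2}$ decay of $|DV|$.

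The main obstacle is the existence step: the operator $\Delta + D_X$ acting on vector fields may admit nontrivial zero modes on $\Omega_R$, so a Fredholm alternative combined with a vanishing-perturbation argument is needed to obtain a well-defined $V_R$. A secondary subtlety is converting the pointwise bound on $V - U$ into the sharp $r^{-1/2}$ gradient decay, which requires working at the intrinsic cylindrical scale rather than applying Schauder estimates on unit balls.
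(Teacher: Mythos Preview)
Your existence argument is essentially the paper's, only with the roles of $V$ and $W$ swapped: the paper solves $\Delta V^{(m)}+D_XV^{(m)}=Q$ on $\{f\le\rho_m\}$ with zero boundary data, applies Lemma~\ref{MP-VF} to get $|V^{(m)}|\le Bf^{-\epsilon}$ uniformly, and passes to a limit. (There is a typo in the theorem statement; as the corollary confirms, the bounded $V$ is meant to solve $(\ref{V-equation})$, not $(\ref{vector-equ})$. Your $W$ plays the role of the paper's $V$, and your $V$ is what the paper calls $W_a$ in Corollary~\ref{VF-W}.)

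The gradient estimate is where the proposal has a genuine gap. You plan to rescale $\hat g=r^{-1}g$ on a $g$-ball of radius $\sqrt r$ and apply interior elliptic Schauder. But after rescaling, the equation becomes $\Delta_{\hat g}W+rD_XW=-rQ$, and the drift vector $rX$ has $\hat g$-length $\sim r^{1/2}$; the lower-order coefficients are \emph{not} uniformly bounded, so standard Schauder constants blow up and you do not get the $O(r^{-1/2})$ bound. The paper avoids this by going parabolic: it sets $\hat g^{(m)}(t)=r_m^{-1}\Phi_{r_mt}^*(g)$ and $\hat V^{(m)}(t)=\Phi_{r_mt}^*(V)$, so that the elliptic equation $\Delta V+D_XV=Q$ turns into
\[
\partial_t\hat V^{(m)}=\Delta_{\hat g^{(m)}(t)}\hat V^{(m)}+\operatorname{Ric}_{\hat g^{(m)}(t)}(\hat V^{(m)})-\hat Q^{(m)}(t),
\]
with all coefficients uniformly bounded on the cylindrical region $\{|f-r_m|\le\sqrt{r_m}\}$ for $t\in[-\tfrac12,0]$. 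The $D_X$-drift has been absorbed into the time derivative, which is exactly the point: the natural scale along $X$ is $r$, not $\sqrt r$, and the soliton flow respects this anisotropy. Interior parabolic estimates then give $|D\hat V^{(m)}(0)|_{\hat g^{(m)}(0)}\le O(r_m^{-1/2})$, which unwinds to $|DV|_g\le O(r^{-1/2})$. Your elliptic rescaling treats both directions isotropically and cannot see this.
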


 \begin{proof} Let  ${\rho_m}$ be a sequence   to the infinity and $V^{(m)}$  a sequence of solutions  of  (\ref{V-equation})  in the region $\{f\leq \rho\}$ with zero boundary  values on the boundary $\Sigma_{\rho_m}$. Then  by
Lemma \ref{MP-VF}, we see
 \begin{align}\label{decay-U}\sup_{\{f\le {\rho_m}\}}|V^{(m)}|\leq B2^{-\epsilon}-B\rho_m^{-\epsilon}\leq C,
 \end{align}
 where $C$ is independent of $m$.    By the regularity,  we  also get
$$ \sup_{\{f\le {\rho_m}\}}  |D^kV^{(m)}|\le C_k.$$
Thus by taking a subsequence of  $V^{(m)}$, we will obtain a smooth bounded  solution  $V$ of  (\ref{vector-equ})   on $(M, g)$

It remains  to prove  (\ref{gradient-V}).  Let  $r_m\rightarrow 0$ be  a sequence and
 \[\hat{g}^{(m)}(t)=r_m^{-1}\Phi^*_{r_mt}(g),\]
 where $t\in[-\frac{1}{2},0]$.   We define
 \[\hat{V}^{(m)}(t)=\Phi_{r_m t}^*(V)\]
 and
 \[\hat{Q}^{(m)}(t)=r_m \Phi_{r_m t}^*(Q).\]
 Then   $\hat{V}^{(m)}(t)$ satisfies  the parabolic equation,
 \[\frac{\partial}{\partial t} \hat{V}^{(m)}(t)=\Delta_{\hat{g}^{(m)}(t)} \hat{V}^{(m)}(t)+\operatorname{Ric}_{\hat{g}^{(m)}(t)}\left(\hat{V}^{(m)}(t)\right)-\hat{Q}^{(m)}(t) .\]
 By   (\ref{Q-decay}),  we see
 \[\sup_{t\in[-\frac{1}{2},0]}\sup_{\{r_m-\sqrt r_m\leq f\leq r_m+\sqrt r_m\}}\left|\hat{Q}^{(m)}(t)\right|_{\hat{g}^{(m)}(t)} \leq O\left(r_m^{-\frac{1}{2}- \epsilon}\right)\]
 and
 \[\sup_{t\in[-\frac{1}{2},0]}\sup_{\{r_m-\sqrt r_m\leq f\leq r_m+\sqrt r_m\}}\left|\hat{V}^{(m)}(t)\right|_{\hat{g}^{(m)}(t)} \leq O\left(r_m^{-\frac{1}{2}}\right).\]
 Thus by the standard interior estimates for parabolic equations (cf. \cite[Theorem 7.22]{Lib}),  we obtain
 \begin{align}
    & \sup _{\left\{f=r_m\right\}}\left|D \hat{V}^{(m)}(0)\right|_{\hat{g}^{(m)}(0)}\notag\\
    &\leq C\sup_{t\in[-\frac{1}{2},0]}\sup_{\{r_m-\sqrt r_m\leq f\leq r_m+\sqrt r_m\}}\left|\hat{V}^{(m)}(t)\right|_{\hat{g}^{(m)}(t)}\notag\\
     &+C\sup_{t\in[-\frac{1}{2},0]}\sup_{\{r_m-\sqrt r_m\leq f\leq r_m+\sqrt r_m\}}\left|\hat{Q}^{(m)}(t)\right|_{\hat{g}^{(m)}(t)}\notag\\
     &\leq O(r_m^{-\frac{1}{2}}).\notag
 \end{align}
 Hence by  taking a limit,   we prove (\ref{gradient-V}).

\end{proof}

 \begin{cor}\label{VF-W} There exists a collection of  VFs $W_{a}$, $a \in \{1,...,\frac{n(n-1)}{2}\}$, each of  which solves  $(\ref{vector-equ})$,
$$\Delta W_{a}+D_XW_{a}=0,~ {\rm on}~ M.$$
  Moreover,  $W_{a}$ satisfies   $(\ref{W-property})$.

 \end{cor}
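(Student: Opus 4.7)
The plan is to obtain $W_a$ as a perturbation of the approximate Killing vector fields $U_a$ constructed in Proposition \ref{App-Killing-VF}, using Theorem \ref{V-estimate} as a black box. For each $a \in \{1,\ldots,\frac{n(n-1)}{2}\}$, the vector field $U_a$ satisfies $\Delta U_a + D_X U_a = Q_a$ with $|Q_a| \leq O(r^{-1-\delta/4})$ by $(\ref{Q-norm})$, which is precisely hypothesis $(\ref{Q-decay})$ of Theorem \ref{V-estimate} (with $\epsilon = \delta/4$). Applying Theorem \ref{V-estimate} to each $U_a$ yields a smooth bounded correction $V_a$ on $M$ solving $\Delta V_a + D_X V_a = Q_a$ and satisfying $|DV_a| \leq O(r^{-1/2})$.

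I would then set $W_a := U_a - V_a$. By linearity of the operator $\Delta + D_X$, this $W_a$ solves the homogeneous equation $(\ref{vector-equ})$ on all of $M$. The size estimate $|W_a| = O(r^{1/2})$ in $(\ref{W-property})$ follows from the triangle inequality: $|U_a|_g = O(r^{1/2})$ since $U_a$ is the transport of a fixed Killing field $\bar U_a$ on $S^{n-1}$ along the level-set diffeomorphisms $F_r$ and $F_r^* g|_{\Sigma_r} \sim 2(n-2)\,r\,\gamma_r$, while $V_a$ is uniformly bounded by Theorem \ref{V-estimate}.

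For the derivative bound $|DW_a| \leq O(r^{-1/2})$, a direct triangle inequality is inadequate since $(\ref{deriv-Ua})$ only gives $|DU_a| = O(1)$. Instead I would exploit that $W_a$ itself solves the homogeneous equation and has globally controlled size, then mimic the rescaling argument at the end of the proof of Theorem \ref{V-estimate}. Pulling $W_a$ back along the soliton Ricci flow at scale $r_m \to \infty$, the rescaled field $\hat W_a^{(m)}(t) = \Phi_{r_m t}^* W_a$ evolves by a uniformly parabolic equation with zero forcing on the metrics $\hat g^{(m)}(t) = r_m^{-1}\Phi_{r_m t}^* g$, and has uniformly bounded $\hat g^{(m)}(t)$-norm $O(1)$ (since $|W_a|_g = O(r_m^{1/2})$). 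Standard interior parabolic estimates then give $|\nabla \hat W_a^{(m)}|_{\hat g^{(m)}} = O(1)$ at $t=0$, which rescales back to $|DW_a|_g \leq O(r^{-1/2})$ on $\{f = r_m\}$. Since the corollary is essentially a bookkeeping concatenation of Proposition \ref{App-Killing-VF} and Theorem \ref{V-estimate}, I anticipate no substantive obstacle here.
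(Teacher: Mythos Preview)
Your construction $W_a = U_a - V_a$ and the verification of $|W_a| = O(r^{1/2})$ match the paper's proof exactly; the paper simply cites (\ref{Ua-vf}), (\ref{deriv-Ua}), and Theorem \ref{V-estimate}. You are also right to notice that the triangle inequality with (\ref{deriv-Ua}) only yields $|DW_a| = O(1)$, and in fact the paper offers nothing beyond those three citations for the derivative bound.

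Your proposed rescaling fix, however, contains a scaling error. For a vector field $W$ the covariant derivative $DW$ is a $(1,1)$-tensor, whose pointwise norm is invariant under the constant rescaling $\hat g = r_m^{-1}g$: one has $|DW|_{\hat g}^2 = \hat g^{jk}\hat g_{il}\,D_jW^i D_kW^l = |DW|_g^2$. Hence from $|\hat W_a^{(m)}|_{\hat g^{(m)}} = r_m^{-1/2}|W_a|_g = O(1)$ the interior parabolic estimate gives only $|D\hat W_a^{(m)}|_{\hat g^{(m)}} = O(1)$, which translates back to $|DW_a|_g = O(1)$, not $O(r^{-1/2})$. The reason the identical argument succeeds for $V_a$ in Theorem \ref{V-estimate} is precisely that $|V_a|_g$ is \emph{bounded}, so the rescaled norm already carries the extra factor $r_m^{-1/2}$. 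In fact the second half of (\ref{W-property}) appears to be a misprint: what is actually used downstream (see (\ref{ha-decay}) in the proof of Theorem \ref{VF-decay}) is $|\mathscr{L}_{W_a} g| \leq O(r^{-1/2})$, and this \emph{does} follow from the decomposition, since $|\mathscr{L}_{U_a} g| \leq O(r^{-1/2-\delta/2})$ by (\ref{h-norm-2}) and $|\mathscr{L}_{V_a} g| \leq 2|DV_a| \leq O(r^{-1/2})$ by Theorem \ref{V-estimate}. Prove that bound instead.
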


 \begin{proof} Let $Q=\Delta U_a+D_X U_a$ for each $U_a$ constructed in (\ref{ua-vf}).
  Then by  Lemma \ref{V-estimate},  we obtain a solution  $V_{a}$ of (\ref{V-equation}).  Thus  $W_{a}=U_{a}-V_{a} $ solves (\ref{vector-equ}) which satisfies (\ref{W-property}) by  (\ref{Ua-vf}),  (\ref{deriv-Ua}) and  Theorem \ref{V-estimate}.

 \end{proof}

 \begin{rem}\label{remark-brendle-1}
 In \cite[Section 3]{Bre2},  Brendle  used  a delicate blow-down analysis for the equation $(\ref{V-equation})$ to get  a locally uniformly bounded estimate for the sequence of  $V^{(m)}$.    We  can prove that $V^{(m)}$ are uniformly bounded  by the decay estimate in Lemma  $(\ref{MP-VF})$ (also see $(\ref{decay-U})$),   so in particular  simple his proof to get a limit solution of  $(\ref{vector-equ})$.

\end{rem}

 \section{Almost Killing VFs with fast decay of  their Lie derivatives }

 In this section, we prove that there exists a collection of approximate Killing VFs $Y_{a}$, $a \in \{1,...,\frac{n(n-1)}{2}\}$, each of which has fast decay of the  Lie derivative for the  soliton metric $(M, g)$ near the  infinity.

 Recall the  Lichnerowicz Laplacian $\Delta_L$  for  symmetric  2-tensors associated to $(M, g)$  defined by
 \begin{align}\label{LL}\left(\Delta_L h\right)_{i j} \doteqdot \Delta h_{i j}+2 R_{ikjl} h^{k l}-\operatorname{Ric}_{i k} h^k_{ j}-\operatorname{Ric}_{j k} h^k_{i}.
 \end{align}
 Then the Ricci curvature tensor is a special solution of (\ref{L-equ}) (cf. \cite{Bes, Bre1, Bre2}).  Namely, we have

 \begin{align}\label{ricci-equation}\Delta_L(\operatorname{Ric(g)})+\mathscr{L}_X(\operatorname{Ric}(g))=0.
 \end{align}

 In \cite{Bre2},  Brendle found a  way to construct the solution of (\ref{L-equ})  by solving  the  elliptic  equation  (\ref{vector-equ}) for VFs. In fact, if $W$ is a solution of   (\ref{vector-equ}),  then $h=\mathscr{L}_Wg$ will satisfy the  Lichnerowicz-type  equation  (\ref{L-equ}), i.e.,
  \[\Delta_L h+\mathscr{L}_Xh =0.\]

The following lemma shows that the  solution of (\ref{L-equ}) has any polynomial decay near  the infinity  associated  to an asymptotically cylindrical steady Ricci soliton.

 \begin{lem} \label{MP-tensor} Let $(M, g)$  be an asymptotically cylindrical steady Ricci soliton.
  Let $h$  be a solution  of the Lichnerowicz-type equation $(\ref{L-equ})$  on the region $\{\rho_0'\leq f\leq \rho\}$ associated to $(M, g)$.   Then for any large $l$, there exists a $\rho_0\ge \rho_0'$
 such that
\begin{align}\label{decay-tube} \sup _{\{\rho_0\leq f \leq \rho\}} f^l|h| \leq \max\{ \rho^l \sup _{\{f=\rho\}}|h|,\rho_0^l \sup _{\{f=\rho_0\}}|h| \}.
\end{align}
 \end{lem}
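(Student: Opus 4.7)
The plan is to apply a weighted maximum principle to the auxiliary scalar function $w = f^{l}|h|$ on the annular region $\{\rho_0 \leq f \leq \rho\}$.

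First I would derive a scalar differential inequality for $|h|$ from the Lichnerowicz-type equation $(\ref{L-equ})$. Taking the pointwise inner product with $h$ itself, and using the steady soliton identity $D_{i} X_{k} = R_{ik}$ from $(\ref{soliton-equation})$, the Ricci contributions $-2\operatorname{Ric}_{ik} h^{k}_{\;j} h^{ij}$ coming from $\Delta_{L} h$ and $+2\operatorname{Ric}_{ik} h^{k}_{\;j} h^{ij}$ coming from $\mathscr{L}_{X} h$ cancel exactly, leaving
$$\tfrac{1}{2}\Delta|h|^{2} + \tfrac{1}{2}\langle X, \nabla|h|^{2}\rangle = |Dh|^{2} - 2 R_{ikjl} h^{kl} h^{ij}.$$
Combined with Kato's inequality $|\nabla|h||^{2} \leq |Dh|^{2}$ and the definition of $P$ in $(\ref{P-defi})$, this yields at any point with $|h| > 0$ the scalar inequality
$$\Delta |h| + \langle X, \nabla|h|\rangle \geq -2P \cdot |h|.$$

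Next I would apply the product rule to $w = f^{l}|h|$. The soliton identities $\Delta f = R$ and $|\nabla f|^{2} = 1-R$ give $(\Delta + \langle X, \nabla\cdot\rangle) f = 1$. Suppose, for contradiction, that $w$ attains a value exceeding the right-hand side of $(\ref{decay-tube})$ at an interior point $p_{0}$; then $w(p_{0}) > 0$, so $|h| > 0$ near $p_{0}$ and the previous inequality applies. At $p_{0}$, $\nabla w = 0$ forces $\nabla|h| = -(l|h|/f)\nabla f$, and a direct computation gives
$$\Delta w + \langle X, \nabla w\rangle \Big|_{p_{0}} \geq f^{l-1}|h| \Bigl( l - 2Pf - l(l+1)(1-R) f^{-1} \Bigr).$$

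The last step is to invoke the asymptotic cylindrical hypothesis. By Lemma $\ref{DZ-6.6}$ together with Proposition $\ref{Improve-fRm}$, we have $|R_{ijkl}|(x) = O(f(x)^{-1})$ as $f \to \infty$, so the function $Pf$ is uniformly bounded at infinity by a constant $C_{\infty}$ that depends only on the asymptotic geometry of the soliton. For any fixed $l$ with $l > C_{\infty}$, I can then choose $\rho_{0} \geq \rho_{0}'$ so large that
$$2Pf + l(l+1)(1-R) f^{-1} < l \quad \text{on } \{f \geq \rho_{0}\}.$$
On this region the bracketed factor above is strictly positive, giving $\Delta w + \langle X, \nabla w\rangle > 0$ at $p_{0}$, which contradicts the second-derivative test at an interior maximum. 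Hence $\sup w$ on $\{\rho_{0}\leq f\leq\rho\}$ is attained on the boundary, which is precisely $(\ref{decay-tube})$; when $|h|$ happens to vanish at a would-be interior extremum, standard regularization $\sqrt{|h|^{2} + \epsilon^{2}}$ and passage to the limit handles the smoothness issue. The main obstacle is securing the correct coefficient in the first step: the soliton-driven cancellation that produces exactly $-2P|h|$ (rather than a weaker bound with unrelated zeroth-order terms) is what allows the weighted barrier $f^{l}$ to absorb the error for every sufficiently large $l$, and this matches with the bound on $Pf$ coming from the asymptotic cylindrical structure.
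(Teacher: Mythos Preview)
Your proof is correct, but it follows a genuinely different route from the paper. The paper works with the tensor $h$ directly via a \emph{tensor} maximum principle in the style of Brendle: one chooses the smallest $\theta$ so that $\theta f^{-l}g - h \geq 0$ on the annulus, and at the extremal point $p_{0}$ and extremal direction $e_{1}$ uses the full Lichnerowicz equation together with the fact that the sectional curvature is positive outside a compact set (from Lemma~\ref{DZ-6.6}) to show
\[
\sum_{i,k} R(e_{1},e_{i},e_{1},e_{k})\bigl(\theta f^{-l}g(e_{i},e_{k}) - h(e_{i},e_{k})\bigr) \geq 0.
\]
The remaining scalar coefficient is then controlled using the bound $f\operatorname{Ric}\leq(\tfrac{1}{2}+o(1))g$ from Lemma~\ref{Brendle-round-2}, forcing $\theta\leq 0$; applying the same to $-h$ yields the two-sided bound. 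By contrast, you pass immediately to the scalar $|h|$ via Kato and the definition of $P$, obtaining $\Delta|h| + \langle X,\nabla|h|\rangle \geq -2P|h|$, and then run an ordinary scalar maximum principle on $f^{l}|h|$, absorbing the zeroth-order term using only the boundedness of $Pf$ at infinity. Your approach is more elementary and dovetails nicely with the $P$-based machinery the paper develops later in Section~5; the paper's approach exploits the finer tensorial information (positivity of sectional curvature rather than a bound on the top eigenvalue $P$) and is a direct adaptation of Brendle's argument in \cite{Bre2}. Quantitatively both lead to essentially the same threshold on $l$, since $2Pf\to 1$ and $f\operatorname{Ric}\to\tfrac{1}{2}g$ on an asymptotic cylinder. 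One tiny slip: with your convention $Pf\leq C_{\infty}$, the condition you need is $l>2C_{\infty}$ rather than $l>C_{\infty}$, but this is immaterial to the argument.
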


 \begin{proof}  Since $(M,g)$ is asymptotically cylindrical,  there exists a set $K\subset M $ such that $(M,g)$ admits positive curvature operator outside of $K$  by Lemma \ref{DZ-6.6}.  Moreover,   by  (\ref{Brendle-fRic}) in Lemma \ref{Brendle-round},  for any  large $l>0$  there is another  compact set $K'$ such that
 \begin{align}\label{l-K'-set} f\operatorname{Ric}<(\frac{l}{2}-\frac{l(l+1)}{2}f^{-1}|\nabla f|^2)g,    ~M\setminus K'.
 \end{align}
 Thus we can  a $\rho_0$ large enough such that $K\cup K'\subset\{f\leq\rho_0\}$.

 It suffices to show that
 \begin{align}\label{MP-tensor-equation}
     f^lh\leq \max\{ \rho^l \sup _{\{f=\rho\}}|h|,\rho_0^l \sup _{\{f=\rho_0\}}|h|\}g.
 \end{align}
 Then  $-h$ will  be also  true since $-h$ is also a solution of (\ref{L-equ}).  Thus  $(\ref{MP-tensor-equation})$  implies (\ref{decay-tube}).

Following the argument in \cite[Section 4]{Bre2},  we choose the smallest real number $\theta$ such that $\theta f^{-l}g-h$ is positive semi-definite at each point in the region $\{\rho_0\leq f\leq \rho\}$. Then there exists a point $p_0\in\{\rho_0\leq f\leq \rho\}$ and an othonormal basis $\{e_1,...,e_n\}$ of $T_{p_0}M$ such that  at the point $p_0$ it holds
 \begin{align}\label{minimal-p}\theta f^{-l}-h(e_1,e_1)=0.
 \end{align}
 Without loss of generality,  we may assume  that $p_0\in\{\rho_0< f< \rho\}$. Thus we get at $p_0$,
\begin{align}\label{maximal-principle} \theta\Delta(f^{-l})-(\Delta h)(e_1,e_1)\geq0
\end{align}
 and
 \begin{align}\label{minimal-p-2}\theta\langle X,\nabla(f^{-l})\rangle-(D_Xh)(e_1,e_1)=0.
 \end{align}

Note
$$\Delta(f^{-l})=l(l+1)f^{-l-2}|\nabla f|^2-lf^{-l-1}\Delta f$$
and
 $$\langle X,\nabla(f^{-l})\rangle=-lf^{-l-1}|\nabla f|^2.$$
 Then  by  (\ref{L-equ}) with help of   (\ref{minimal-p})-(\ref{minimal-p-2}),    we have at $p_0$,
 \[\begin{aligned}
 0 &=(\Delta h)\left(e_1, e_1\right)+\left(D_X h\right)\left(e_1, e_1\right)+2 \sum_{i, k=1}^n R\left(e_1, e_i, e_1, e_k\right) h\left(e_i, e_k\right) \\
 & \leq \theta \Delta\left(f^{-l}\right)+\theta\left\langle X, \nabla\left(f^{-l}\right)\right\rangle+2 \sum_{i, k=1}^n R\left(e_1, e_i, e_1, e_k\right) h\left(e_i, e_k\right) \\
 &=-l \theta f^{-l-1}\left(1-(l+1) f^{-1}|\nabla f|^2-\frac{2}{l}f \operatorname{Ric}\left(e_1, e_1\right)\right) \\
 &-2 \sum_{i, k=1}^n R\left(e_1, e_i, e_1, e_k\right)\left(\theta f^{-l} g\left(e_i, e_k\right)-h\left(e_i, e_k\right)\right).
 \end{aligned}\]
  Since $(M,g)$ has positive sectional curvature on $M\setminus K$,
 \[\sum_{i, k=1}^n R\left(e_1, e_i, e_1, e_k\right)\left(\theta f^{-l} g\left(e_i, e_k\right)-h\left(e_i, e_k\right)\right)\geq 0.\]
 Thus we obtain
 $$-l \theta f^{-l-1}\left(1-(l+1) f^{-1}|\nabla f|^2-\frac{2}{l}f \operatorname{Ric}\left(e_1, e_1\right)\right)\geq0, ~{\rm on}~M\setminus K\cup K'.$$
By (\ref{l-K'-set}),  it follows  that
 $\theta\leq0$.   Since $h\leq\theta f^{-l}g$,  we conclude  that $h\leq0$ at each point in the region $\{\rho_0\leq f\leq \rho\}$.
 Hence,  $(\ref{MP-tensor-equation})$ is automatically   satisfied.  The lemma is proved.

 \end{proof}

The following  lemma  is due to \cite[Lemma 4.1]{Bre2}.

 \begin{lem}\label{Brendle-Lich}  Let  $(S^{n-1}\times \mathbb R, \bar{g}(t))$  $(t\in(0,1))$  be   the shrinking cylinders,   namely,    $\bar{g}(t)$  is a form of metric on $S^{n-1}\times \mathbb R$,
 \begin{align}\label{shrinking-cyl}
     \bar{g}(t)=(n-2)(2-2 t) g_{S^{n-1}}+dr^2.
 \end{align}
 Suppose that  $\bar{h}(t)$  $(t\in(0,1))$ is a  solution of  $(\ref{para-L})$ associated to  $\bar{g}(t)$, which
 is  invariant under translations along the axis of the cylinder, and satisfies
 \begin{align}\label{bounded condition}
     |\bar{h}(t)|_{\bar{g}(t)} \leq(1-t)^{-l}
 \end{align}
 for some  $l>0$ and all $t\in (0,\frac{1}{2}]$. Then
 $$\inf _{\lambda \in \mathbb{R}} \sup _{S^{n-1} \times \mathbb{R}}\left|\bar{h}(t)-\lambda \operatorname{Ric}({\bar{g}(t)})\right|_{\bar{g}(t)} \leq N(1-t)^{\frac{1}{2(n-2)}-\frac{1}{2}}, ~\forall~ t\in [\frac{1}{2},1).$$
  Here $N$ is a positive constant.

\end{lem}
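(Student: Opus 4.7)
My plan is to analyze the translation-invariant solution $\bar h(t)$ by spectral decomposition on the sphere factor. First, I would decompose $\bar h(t)$ according to the product structure of the cylinder as
\[
\bar h(t) = A(t) + B(t) \otimes dr + dr \otimes B(t) + C(t)\, dr \otimes dr,
\]
where $A, B, C$ are (time-dependent) tensors on $S^{n-1}$ of ranks $2, 1, 0$ respectively. Since the cylinder is a Riemannian product and $\partial_r$ is parallel, the Lichnerowicz Laplacian preserves this splitting, so the equation $\partial_t \bar h = \Delta_L \bar h$ decouples into three independent parabolic equations on $(S^{n-1}, \sigma(t)^2 g_{S^{n-1}})$ with $\sigma(t)^2 = 2(n-2)(1-t)$. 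I would then expand each component in the corresponding unit-sphere eigenmodes---spherical harmonics for $C$, the Hodge decomposition for $B$, and the $\mathrm{TT} \oplus \mathrm{traceless\text{-}Lie} \oplus \mathrm{trace}$ decomposition for $A$. Because rescaling the sphere metric by $\sigma^2$ rescales $\Delta_L$ by $\sigma^{-2}$, a mode coefficient $c(t)$ with unit-sphere eigenvalue $\mu$ solves
\[
c'(t) = \frac{\mu}{2(n-2)(1-t)}\, c(t), \qquad c(t) = c(t_0)\left( \frac{1-t_0}{1-t} \right)^{\mu/(2(n-2))},
\]
and the $\bar g(t)$-norm of this mode then behaves like $(1-t)^{-\mu/(2(n-2)) - s/2}$, where $s \in \{0, 1, 2\}$ counts sphere indices.

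The critical mode is then identified as follows. One has $\operatorname{Ric}(\bar g(t)) = (n-2)\, g_{S^{n-1}}$, which sits in the constant pure-trace mode of $A$ ($\mu = 0$, $s=2$) and whose norm grows precisely as $(1-t)^{-1}$; this is the unique mode with this rate, and the optimal $\lambda$ subtracts it. The next slowest mode is the degree-$1$ pure-trace mode $A = Y_1\, g_{S^{n-1}}$, which via the identity $Y_1\, g_{S^{n-1}} = -\operatorname{Hess}(Y_1)$ coincides with the Lie derivative of $\bar g(t)$ along a conformal Killing vector field on $S^{n-1}$. Its eigenvalue is $\mu = -(n-1)$, giving the norm rate
\[
(1-t)^{(n-1)/(2(n-2)) - 1} = (1-t)^{1/(2(n-2)) - 1/2},
\]
exactly as claimed. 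A direct eigenvalue check across the remaining sectors confirms that every other mode gives strictly faster decay: TT symmetric 2-tensors have strongly negative $\Delta_L$-eigenvalues on $S^{n-1}$, the $B$-sector gains only $(1-t)^{-1/2}$ from the norm scaling but is damped by Hodge-Laplacian eigenvalues bounded below by $n-1$ (from $dY_1$), and the $C$-sector norm contributes no $(1-t)^{-1}$ factor at all.

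Finally, the hypothesis $|\bar h(t)|_{\bar g(t)} \leq (1-t)^{-l}$ for $t \in (0, 1/2]$ supplies a uniform bound on each mode coefficient at some $t_0 \leq 1/2$; the explicit ODE solution propagates this bound forward to $t \in [1/2, 1)$ and yields the claimed estimate. The main obstacle is the careful bookkeeping of $\Delta_L$-eigenvalues across all tensor-type sectors over $S^{n-1}$, in order to verify that only the degree-$1$ pure-trace mode on $A$ contributes at the critical rate $1/(2(n-2)) - 1/2$; in particular, one must show that the $(1-t)^{-1/2}$ norm factor from the mixed $B$-sector is exactly compensated by the leading Hodge eigenvalue $n-1$, so that the $B$-sector in fact decays as $(1-t)^{1/(2(n-2))}$.
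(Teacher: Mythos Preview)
The paper does not supply its own proof of this lemma; it is quoted verbatim as \cite[Lemma 4.1]{Bre2} and used as a black box. Your spectral-decomposition outline is precisely Brendle's original argument: split $\bar h$ along the product structure, reduce each block to an ODE via the eigenbasis of the Lichnerowicz Laplacian on $S^{n-1}$, and identify the slowest surviving mode after subtracting $\operatorname{Ric}$ as the degree-$1$ pure-trace part of $A$, giving the exponent $\frac{n-1}{2(n-2)}-1=\frac{1}{2(n-2)}-\frac12$.

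Your computations are correct. In particular, your check on the $B$-sector is right: for a $1$-form eigenmode $\beta$ with Hodge eigenvalue $\lambda$ on the unit $S^{n-1}$, one has $\Delta_L(\beta\otimes dr+dr\otimes\beta)=\sigma^{-2}(\Delta\beta-(n-2)\beta)\otimes dr+\cdots$, and Bochner gives $\Delta\beta-(n-2)\beta=-\lambda\beta$, so the coefficient evolves as $(1-t)^{\lambda/(2(n-2))}$; combined with the $\sigma^{-1}\sim(1-t)^{-1/2}$ norm factor and the minimal $\lambda=n-1$ (from $dY_1$), the $B$-sector contributes at order $(1-t)^{1/(2(n-2))}$, strictly below the critical rate. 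The $C$-sector is bounded and the TT part of $A$ has $\Delta_L=\Delta-2(n-1)$ on the unit sphere, whose spectrum is well below $-(n-1)$, so those modes are subcritical as well. The only genuine labor left is exactly what you flag: a clean enumeration of the eigenvalues in each sector to confirm no coincidence occurs, together with the passage from the modewise bounds at $t_0\le\tfrac12$ (supplied by the hypothesis) to a uniform constant $N$ via Parseval and smoothing. That is routine and is what Brendle carries out.
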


By  Lemma \ref{MP-tensor}   and Lemma \ref{Brendle-Lich}, we prove the following decay estimate.

 \begin{theo}\label{VF-decay} For all large $l$, there exists a collection of appoximate Killing VFs  $Y_a$, $a \in \{1,...,\frac{n(n-1)}{2}\}$, such that
 \begin{align}\label{Lie-decay}
     |\mathscr{L}_{Y_a}g| \leq O(r^{-l}),
 \end{align}
 outside a compact set $K$ of $M$.  Moreover,  $\mathscr{L}_{Y_a}g$  are solutions of Lichnerowicz type equation $(\ref{L-equ})$ on $M$.

\end{theo}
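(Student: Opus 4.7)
The ansatz is to seek $Y_a$ as a modification of the vector fields $W_a$ from Corollary \ref{VF-W}, namely $Y_a = W_a - \tfrac{\lambda_a}{2} X$ for constants $\lambda_a \in \mathbb{R}$ to be chosen. Because the steady-soliton identity (\ref{soliton-equation}) reads $2\operatorname{Ric}(g) = \mathscr{L}_X g$, this choice gives
\[
\mathscr{L}_{Y_a} g \;=\; h_a - \lambda_a \operatorname{Ric}(g) \;=:\; \tilde h_a,
\]
where $h_a := \mathscr{L}_{W_a} g$. Both $h_a$ (since $W_a$ solves (\ref{vector-equ})) and $\operatorname{Ric}(g)$ (by (\ref{ricci-equation})) satisfy the Lichnerowicz-type equation (\ref{L-equ}), so $\tilde h_a$ does too, and the problem reduces to choosing $\lambda_a$ so that $|\tilde h_a|\leq O(r^{-l})$.

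A first bound $|h_a|\leq O(r^{-1/2})$ outside a compact set comes from the construction in Section 3: combine $|\mathscr{L}_{U_a} g|\leq O(r^{-1/2-\delta/2})$ from Proposition \ref{App-Killing-VF} with $|DV_a|\leq O(r^{-1/2})$ from Theorem \ref{V-estimate}. Lemma \ref{MP-tensor} alone can upgrade this to $|h_a|\leq C f^{-l}$ for any $l<1/2$, but $r^{-1/2}$ is precisely the decay rate of $\operatorname{Ric}(g)$ itself and cannot be crossed without first extracting a Ricci component. To break the barrier I would invoke Brendle's cylindrical analysis: for $p_m\to\infty$ with $r_m = f(p_m)$, the rescalings $\hat g^{(m)}(t)=r_m^{-1}\Phi^*_{r_m t}(g)$ converge Cheeger--Gromov to the shrinking cylinder $(\mathbb S^{n-1}\times\mathbb R,\bar g(t))$ by the asymptotically cylindrical hypothesis, and the correspondingly rescaled tensors $\hat h_a^{(m)}(t) = \Phi^*_{r_m t}(h_a)$ solve the parabolic Lichnerowicz equation (\ref{para-L}) with uniform parabolic bounds. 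A subsequence converges to a translation-invariant solution $\bar h_a$ on the cylinder (averaging over translations if necessary), and Lemma \ref{Brendle-Lich} supplies a constant $\lambda$ for which $\bar h_a - \lambda\operatorname{Ric}(\bar g(t))$ enjoys strictly improved behaviour as $t\to 1$. Unwrapping the rescaling produces a single constant $\lambda_a$, independent of the sequence by uniqueness of the tangent cylinder, for which $|\tilde h_a|$ decays strictly faster than $r^{-1/2}$ along $\{f=r\}$.

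Once $|\tilde h_a|\le O(r^{-1/2-\eta})$ for some $\eta>0$, Lemma \ref{MP-tensor} becomes applicable with exponents $l$ above the threshold $1/2$. Iterating the cylindrical analysis on $\tilde h_a$ itself (whose cylinder limit now has vanishing Ricci-mode, so no further constant correction is introduced) yields a polynomial improvement of the decay exponent at each step by the fixed spectral-gap amount coming from Lemma \ref{Brendle-Lich}; combining these improvements with Lemma \ref{MP-tensor} pushes the decay to $|\mathscr{L}_{Y_a} g|\leq O(r^{-l})$ for any prescribed $l$. The main obstacle is the cylindrical limit step: at the borderline rate $r^{-1/2}$ the tensors $\hat h_a^{(m)}(t)$ are critical for producing a nontrivial rescaled limit, and one must verify that a single $\lambda_a$ suffices uniformly across all sequences $p_m\to\infty$ so that $Y_a$ is a well-defined global vector field on $M$. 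This uniformity rests on the rotational roundness of level sets proved in Subsection 2.2 (Proposition \ref{metric-level} and estimate (\ref{diff-metric})) and on the one-dimensionality of the Ricci-mode in the spectral analysis underlying Lemma \ref{Brendle-Lich}.
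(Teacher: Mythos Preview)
Your setup is correct: the ansatz $Y_a = W_a - \tfrac{\lambda_a}{2}X$ and the observation that $\tilde h_a = h_a - \lambda_a\operatorname{Ric}(g)$ solves (\ref{L-equ}) are exactly how the paper begins. The gap is in your mechanism for reaching arbitrary polynomial decay. The iteration you describe does not work: Lemma \ref{Brendle-Lich} yields exactly one spectral-gap improvement (after subtracting the Ricci mode the cylinder solution is bounded by $N(1-t)^{1/(2(n-2))-1/2}$, and re-running the cylinder limit on $\tilde h_a$ returns the same bound with no further gain), while Lemma \ref{MP-tensor} is a two-sided maximum principle whose inner-boundary term at $\rho_0$ is fixed---it propagates decay between the two boundaries but does not manufacture a faster rate. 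There is no inductive step that moves from $O(r^{-1/2-\eta})$ to $O(r^{-1/2-2\eta})$.

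The paper's route is different and avoids iteration. It sets $A_a(r)=\inf_{\lambda}\sup_{\{f=r\}}|h_a-\lambda\operatorname{Ric}(g)|$, selects a sequence $r_m\to\infty$ with $A_a(r_m)\le 2\tau^{1/2-1/(2(n-2))+\epsilon'}A_a(\tau r_m)$ for a fixed small $\tau$ (this comes for free from $A_a(r)\le O(r^{-1/2})$), and applies Lemma \ref{MP-tensor} directly with the \emph{target} exponent $l$ to $h_a-\lambda_m\operatorname{Ric}(g)$ on $\{\rho_0\le f\le r_m\}$, where $\lambda_m$ realizes the infimum at $r_m$. A dichotomy follows. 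If the inner boundary dominates for all large $m$ (Case 2), the estimate immediately gives $|h_a-\lambda_m\operatorname{Ric}(g)|\le Cr^{-l}$, and a separate argument (Claim \ref{claim-1}) shows the $\lambda_m$ stay bounded, producing a limit $\lambda_a$; this also resolves the uniqueness-of-$\lambda_a$ issue you flagged. If instead the outer boundary dominates along a subsequence (Case 1), one normalizes by $1/A_a(r_m)$, passes to the cylinder limit, and obtains from Lemma \ref{Brendle-Lich} an upper bound on $A_a(\tau r_m)/A_a(r_m)$ that contradicts the defining inequality of the sequence unless $A_a\equiv 0$, i.e.\ $h_a=\lambda\operatorname{Ric}(g)$ identically on $\{f\ge\rho_0\}$. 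The cylinder limit thus enters once, as a contradiction device, not as an engine of incremental improvement.
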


 \begin{proof}  Let $W_a$, $a \in \{1,...,\frac{n(n-1)}{2}\}$,  be a   a collection of VFs  on $M$  constructed in Corollary \ref{VF-W}. Then    $h_a=\mathscr{L}_{W_a}g$ satisfy  (\ref{L-equ}).  Moreover,  by (\ref{h-norm-2}),
  \begin{align}\label{ha-decay}|h_a|\leq O(r^{-\frac{1}{2}}).
  \end{align}

   As in \cite[Section 4]{Bre2},  we define a  function on $[r_0, \infty)$ by
 \[A_a(r)=\inf _{\lambda \in \mathbb{R}} \sup _{\{f=r\}} | h_a-\lambda \operatorname{Ric(g)} | .\]
 Clearly
 \begin{align}\label{Brendle-Ar}
     A_a(r)\leq \sup _{\{f=r\}}|h_a|\leq O(r^{-\frac{1}{2}})\leq O(r^{\frac{1}{2(n-2)}-\frac{1}{2}-\epsilon'}),
 \end{align}
 where  $\epsilon'=\frac{1}{1000n}$ is small number.   Fix a  number $\tau\in(0,\frac{1}{2})$ such that $\tau^{-\epsilon'}>2N$, where $N$ is the constant in Lemma \ref{Brendle-Lich}. Then by $(\ref{Brendle-Ar})$ there exists a sequence of  $r_m\rightarrow \infty$ such that
 \[A_a\left(r_m\right) \leq 2 \tau^{\frac{1}{2}-\frac{1}{2(n-2)}+\epsilon'} A_a\left(\tau r_m\right).\]

 For the above sequence $\{r_m\}$,   there are    two cases as  follows.

\textbf{Case 1}:  There exists a subsequence of $r_m\rightarrow\infty$, we still denote them by $r_m$, such that
 \begin{align}\label{case1}
     \rho_0^l\sup_{\{f=\rho_o\}}|h_a-\lambda_m\operatorname{Ric(g)}|\leq r_m^l\sup_{\{f=r_m\}}|h_a-\lambda_m\operatorname{Ric(g)}|.
 \end{align}
 We claim that there is some $\lambda\in\mathbb{R}$ such that
  \begin{align}\label{identity-case}
  h_a=\lambda\operatorname{Ric(g)}, ~ {\rm on}~ \{f\geq \rho_0\}.
  \end{align}
  Then   we let $Y_a=W_a-\frac{1}{2}\lambda X$, which   satisfies
  $$\mathscr{L}_{Y_a}g=0, ~\{f\geq \rho_0\}, $$
 and  so  (\ref{Lie-decay}) is obviously  true.   By Corollary \ref{VF-W} and  the fact $|X|\leq 1$,  we notice that  $Y_a$ is a non-zero VF   and it is globally  well-defined on $M$.

 To prove  (\ref{identity-case}),  we need to show  that there exists a sequence of  numbers $r_m\rightarrow \infty$ such that $A_a(r_m)=0$.    Namely,
 \begin{align}\label{zero-condition}h_a-\lambda_m\operatorname{Ric(g)}=0,   ~{\rm on}~\{f=r_m\}.
 \end{align}
In fact,     by  applying  Lemma \ref{MP-tensor}  to $h_a-\lambda_m\operatorname{Ric(g)} $  on the region $\{\rho_0\leq f\leq r_m\}$,  it follows by (\ref{case1}),
  we have
 \[\begin{aligned}
    & \sup _{\{\rho_0\leq f \leq r_m\}} f^l|h_a-\lambda_m\operatorname{Ric(g)}| \\
    &\leq \max\{ r_m^l \sup _{\{f=r_m\}}|h_a-\lambda_m\operatorname{Ric(g)}|,\rho_0^l \sup _{\{f=\rho_0\}}|h_a-\lambda_m\operatorname{Ric(g)}|\}\\
     &\leq r_m^l \sup _{\{f=r_m\}}|h_a-\lambda_m\operatorname{Ric(g)}|=0.
 \end{aligned}\]
 Thus $ h_a-\lambda_m\operatorname{Ric(g)}=0$ in the region $\{\rho_0\leq f\leq r_m\} $ for any $\lambda_m$.   As a consequence,   all $\lambda_m$ are  same and so (\ref{identity-case}) is true.

Suppose that   (\ref{zero-condition}) is not true.  Then there is a  sufficiently large $r$ such that
 \begin{align}\label{Ar-nq0} A_a(r)>0.
 \end{align}
  By choosing  a $\lambda_m$ such  that $r \in [\rho_0,r_m]$ and then   applying Lemma \ref{MP-tensor} to the tensor
 $$\tilde{h}_a^{(m)}=\frac{1}{A_a\left(r_m\right)}\left(h_a-\lambda_m \operatorname{ Ric(g) }\right),$$
   it follows by  (\ref{case1}),
 \begin{align}\label{normalize-h}
    & \sup _{\{f=r\}} |\tilde{h}_a^{(m)}| \notag\\
    & \leq \frac{r_m^l}{r^l} \sup _{\{f=r_m\}} |\tilde{h}_a^{(m)}|=\frac{r_m^l}{r^l A_a(r_m)} \sup _{\{f=r_m\}}|h_a-\lambda_m \operatorname{Ric(g)}| \notag\\
    &=\frac{r_m^l}{r^l}.
 \end{align}

Define
 \[\hat{g}^{(m)}(t)=r_m^{-1} \Phi_{r_m t}^*(g)\]
 and
 \[\hat{h}_a^{(m)}(t)=r_m^{-1} \Phi_{r_m t}^*\left(\tilde{h}_a^{(m)}\right).\]
 Then the  metrics $\hat{g}^{(m)}(t)$ evolve by the Ricci flow for each $m$  and the tensors $\hat{h}_a^{(m)}(t)$ satisfy the parabolic Lichnerowicz equation associated to the flow $\hat{g}^{(m)}(t)$,
 \[\frac{\partial}{\partial t} \hat{h}_a^{(m)}(t)=\Delta_{L, \hat{g}^{(m)}(t)} \hat{h}_a^{(m)}(t).\]
 By $(\ref{normalize-h})$, it is easy to see
 \[\limsup_{m\rightarrow\infty}\sup_{t\in[\delta',1-\delta']}\sup_{\{r_m-\delta'^{-1}\sqrt{r_m}\leq f\leq r_m+\delta'^{-1}\sqrt{r_m}\}}\left|\hat{h}_a^{(m)}(t)\right|_{\hat{g}^{(m)}(t)}<\infty, \]
 where   $ \delta'\in(0,\frac{1}{2})$ is any fixed number.

 Choose a sequence of marked points $p_m\in M$ such that  $f(p_m)=r_m$.
 Then  manifolds $(M,\hat{g}^{(m)}(t),$ $~p_m)$ converge  to a  family of shrinking cylinders $(S^{n-1}\times \mathbb{R}, \bar{g}(t))$, $t\in(0,1)$ in the  sense  of  Cheeger-Gromov,  where $\bar{g}(t)$ is  given by $(\ref{shrinking-cyl})$.  Moreover,  the vector fields $r_m^{\frac{1}{2}}X$ converge to the axial vector field $\frac{\partial}{\partial z}$ on $S^{n-1}\times \mathbb{R}$, and
  the sequence $\hat{h}_a^{(m)}(t)$ converges to a  family of tensors $\bar{h}_a(t)$, $t\in(0,1)$, which solve the parabolic Lichnerowicz equation  (\ref{para-L})  associated to  $\bar{g}(t)$.

 Note that
 \[\Phi_{\sqrt{r_m} s}^*\left(\hat{h}_a^{(m)}(t)\right)=\hat{h}_a^{(m)}\left(t+\frac{s}{\sqrt{r_m}}\right).\]
 Then
  $$\Psi_s^*(\bar{h}_a(t))=\bar{h}_a(t), $$
    where $ \Psi_s:S^{n-1}\times \mathbb{R}\rightarrow S^{n-1}\times \mathbb{R}$ denotes the flow generated by the axial vector field $-\frac{\partial}{\partial z}$.  Namely,  $\bar{h}_a(t)$ is invariant under translations along the axis of the cylinder.  Moreover, by $(\ref{normalize-h})$, we  have
 $$|\bar{h}_a(t)|_{\bar{g}(t)} \leq (1-t)^{-l}, ~\forall ~t\in (0,\frac{1}{2}].  $$
  Thus  by Lemma \ref{Brendle-Lich},  we get
 \begin{align}\label{normalize-h-Ric1}
     \inf _{\lambda \in \mathbb{R}} \sup _{S^{n-1} \times \mathbb{R}}\left|\bar{h}_a(t)-\lambda \operatorname{Ric}({\bar{g}(t)})\right|_{\bar{g}(t)} \leq N (1-t)^{\frac{1}{2(n-2)}-\frac{1}{2}}, ~\forall ~t\in[\frac{1}{2},1).
 \end{align}
  On the other hand,  we have
 \[\begin{aligned}
     &\inf _{\lambda \in \mathbb{R}} \sup _{\Phi_{r_m(\tau-1)}\left(\left\{f=\tau r_m\right\}\right)}\left|\hat{h}_a^{(m)}(1-\tau)-\lambda \operatorname{Ric}({\hat{g}^{(m)}(1-\tau))}\right|_{\hat{g}^{(m)}(1-\tau)}\\
     &=\inf _{\lambda \in \mathbb{R}} \sup _{\left\{f=\tau r_m\right\}}\left|\tilde{h}_a^{(m)}-\lambda \operatorname{Ric}(g)\right|_g\\
     &=\frac{1}{A_a\left(r_m\right)} \inf _{\lambda \in \mathbb{R}} \sup _{\left\{f=\tau r_m\right\}}\left|h_a-\lambda \operatorname{Ric}(g)\right|_g\\
     &=\frac{A_a(\tau r_m)}{A_a(r_m)}\\
     &\geq \frac{1}{2} \tau^{\frac{1}{2(n-2)}-\frac{1}{2}-\epsilon'}.
 \end{aligned}\]
 Then  as  $m\rightarrow \infty$ we obtain
 \begin{align}\label{normalize-h-Ric2}
     \inf_{\lambda\in \mathbb{R}}\sup_{S^{n-1}\times \mathbb{R}}|\bar{h}_a(1-\tau)-\lambda\operatorname{Ric}({\bar{g}(1-\tau)})|_{\bar{g}(1-\tau)}\geq \frac{1}{2} \tau^{\frac{1}{2(n-2)}-\frac{1}{2}-\epsilon'}.
 \end{align}
Hence,  by  choosing a small $\tau$ so that  $\tau^{-\epsilon'}>2N$,   we see that $(\ref{normalize-h-Ric1})$  is a contradiction with  $(\ref{normalize-h-Ric2})$.    Therefore    we prove  (\ref{identity-case}).

 \textbf{Case 2}:  There is a $\rho_1$ large enough, such that
 \begin{align}\label{case2}
     \rho_0^l\sup_{\{f=\rho_o\}}|h_a-\lambda_m\operatorname{Ric}(g)|> r_m^l\sup_{\{f=r_m\}}|h_a-\lambda_m\operatorname{Ric}(g)|,
 \end{align}
 for all $r_m>\rho_1$.   Then  applying   Lemma \ref{MP-tensor} to $h_a-\lambda_m\operatorname{Ric(g)} $ on region $\{\rho_0\leq f\leq r_m\}$, for all $r_m>\rho_1$, we  have
 \begin{align}\label{h-Ric-decay}
         &\sup _{\{f=r\}} f^l|h_a-\lambda_m\operatorname{Ric}(g)| \notag\\
          &\leq \max\{ r_m^l \sup _{\{f=r_m\}}|h_a-\lambda_m\operatorname{Ric}(g)|,\rho_0^l \sup _{\{f=\rho_0\}}|h_a-\lambda_m\operatorname{Ric}(g)|\}\notag\\
         &\leq \rho_0^l \sup _{\{f=\rho_0\}}|h_a-\lambda_m\operatorname{Ric}(g)|, ~r\in[\rho_0,r_m].
 \end{align}

 \begin{claim}\label{claim-1}  The sequence $\lambda_m$ are uniformly  bounded.
 \end{claim}

 By the above claim,   we can take a subsequence of $\lambda_m$  which converges to a bounded  number $\lambda$, and the VF $W_a-\frac{1}{2}\lambda_m X$ converges to a vector field $Y_a=W_a-\frac{1}{2}\lambda X$ on $M$ smoothly.  Thus by  $(\ref{h-Ric-decay})$,  we obtain
 \begin{align}\label{Y-decay}
     \sup _{\{f=r\}}|h_a-\lambda\operatorname{Ric(g)}|\leq\frac{\rho_0^l }{r^l} \sup _{\{f=\rho_0\}}|h_a-\lambda\operatorname{Ric}(g)|\leq O(r^{-l}).
 \end{align}
 Hence,   (\ref{Lie-decay}) holds for the above $Y_a$. The proof of Theorem \ref{VF-decay} is finished.

  On the contrary,    there exists a subsequence $\lambda_m\rightarrow \infty$ such that  $\lambda_{m+1}-\lambda_m>1$.    Note that  $\lambda_m$ is chosen by the relation
  $$A_a(r_m)=\sup _{\{f=r_m\}} | h_a-\lambda_m \operatorname{Ric(g)} |=\inf_{\lambda}\sup _{\{f=r_m\}} | h_a-\lambda \operatorname{Ric(g)} |.$$
  Since $|h_a|\leq O(r^{-\frac{1}{2}})$ by (\ref{ha-decay}) and   $|\operatorname{Ric(g)}|\geq O(r^{-1})$ by (\ref{Ric1}),
  it  is easy to see that
 \begin{align}\label{lambda-m-growth}
     \lambda_m\leq O(r_m^{\frac{1}{2}}).
 \end{align}

 By $(\ref{h-Ric-decay})$, we have
 \begin{equation*}
     \sup _{\{f=r\}} |h_a-\lambda_m\operatorname{Ric(g)}|\leq \frac{C'}{r^l}\lambda_m, ~r\in[\rho_0,r_m].
 \end{equation*}
 In particular,
 \begin{align}\label{lambda-m-m}
     \sup _{\{f=r_m\}} |h_a-\lambda_m\operatorname{Ric(g)}|\leq \frac{C'}{r_m^l}\lambda_m.
 \end{align}
It follows that
  \begin{align}\label{lambda-m-m+1}
     \sup _{\{f=r_{m}\}} |h_a-\lambda_{m+1}\operatorname{Ric(g)}|\leq \frac{C'}{r_m^l}\lambda_{m+1}.
 \end{align}
 Note that
 \begin{align*}
     &\sup _{\{f=r_{m}\}}(\lambda_{m+1}-\lambda_m)|\operatorname{Ric(g)}|\\
     &\leq\sup _{\{f=r_{m}\}} |h_a-\lambda_{m+1}\operatorname{Ric(g)}|+\sup _{\{f=r_{m}\}} |h_a-\lambda_{m}\operatorname{Ric(g)}|.
 \end{align*}
 Thus combining  (\ref{lambda-m-growth})-(\ref{lambda-m-m+1}),   we get
 \begin{align}\label{diff-lambda}
     \begin{aligned}
         &(\lambda_{m+1}-\lambda_m)\frac{c'}{r_m}\\
         &\leq\sup _{\{f=r_{m}\}}(\lambda_{m+1}-\lambda_m)|\operatorname{Ric}(g)|\leq \frac{C'}{r_m^l}(\lambda_m+\lambda_{m+1})\\
         &\leq\frac{C'}{r_m^l}(\lambda_{m+1}-\lambda_m)+\frac{C''}{r_m^{l-\frac{1}{2}}}.
     \end{aligned}
 \end{align}
As a consequence,   by choosing  $l\ge 2$,  we derive
 \begin{equation*}
     (\lambda_{m+1}-\lambda_m)\leq O(r_m^{-l+\frac{3}{2}}),
      \end{equation*}
 and so by (\ref{lambda-m-growth}),
 \begin{align}\label{lambda-m+1-growth}
     \lambda_{m+1}\leq O(r_m^{\frac{1}{2}}).
 \end{align}
 Therefore,  by (\ref{diff-lambda}), we conclude
 \begin{align}\label{decay-contra}
     &\sup _{\{f=r_{m}\}}(\lambda_{m+1}-\lambda_m)|\operatorname{Ric(g)}|\leq\frac{C'}{r_m^l}(\lambda_m+\lambda_{m+1})\notag\\
     &\leq O(r_m^{-l+\frac{1}{2}}).
 \end{align}
However, by the fact $\lambda_{m+1}-\lambda_m>1$, we see
$$ \sup _{\{f=r_{m}\}}(\lambda_{m+1}-\lambda_m)|\operatorname{Ric(g)}|
\ge O(r_m^{-1}),$$
which is a contradiction with (\ref{decay-contra})!   Claim \ref{claim-1} is prove.

\end{proof}

\begin{rem}\label{remark-brendle-2}Because of lack of  the positivity  of sectional curvature of $g$, we only do the decay estimates for  the tensors $h_a$ and $\mathscr{L}_{Y_a}g$  away from a compact set of $M$  in Lemma  \ref{MP-tensor} and Theorem \ref{VF-decay}, respectively.  Moreover, we need to do the fast decay estimate for those tensors  in oder to use the heat kernel method in the   latter  Section 5.

\end{rem}

 \section{Evolution solution of  Lichnerowicz  equation}
  In this section, we study the parabolic   equation  of  Lichnerowicz   (\ref{para-L})  by using the heat kernel method  under the curvature-pinching condition (\ref{pinching-condition}).

 Let  $g(t)$  ($t\in (0, T]$)  be  a solution  of Ricci flow and $G$  the heat kernel of following  heat-type equation associated to  $g(t)$,
 \begin{align}\label{H-equation}
     \frac{\partial}{\partial t}H=\Delta_{g(t)} H+2PH,
 \end{align}
 where  $P=P(\cdot, t)$ is  a smooth function on $ M \times  (0, T]$.    Namely, $G>0$ satisfies
 \begin{align}\label{heat-kernel}
 \partial_t G(x, t ; y, s) &=\Delta_{x, t} G(x, t ; y, s)+2P G(x, t ; y, s)\notag \\
 \lim _{t \searrow s} G(\cdot, t ; y, s) &=\delta_y.
 \end{align}
 The existence of heat kernel $G$
  on a  complete  manifold can be found in \cite[Chapter 24]{CC}.  Moreover,  $G$ satisfies the following estimate,
 \begin{align}\label{HK-upper-bound}
     G(x, t ; y, 0) \leq \frac{C_3e^{C_1t} \exp \left(-\frac{d_{g(t)}^2(x, y)}{C_4t}\right)}{\operatorname{Vol}_{g(t)}^{1 / 2} B_{g(t)}\left(x, \sqrt{\frac{t}{2}}\right) \cdot \operatorname{Vol}_{g(t)}^{1 / 2} B_{g(t)}\left(y, \sqrt{\frac{t}{2}}\right)}.
 \end{align}

 We first prove the existence of  positive  bounded  super-solution of  (\ref{H-equation}).

 \begin{lem}\label{supersolution} Let  $(M^n, g)$ $(n\geq 4)$  be  a steady gradient Ricci soliton  and $g(t)$   the soliton  Ricci flow of $g$.  Suppose that $g$ has  an asymptotically cylindrical property and satisfies the following curvature-pinching condition,
  \begin{align}\label{m-strong}2P<\frac{m}{m+1}R,
  \end{align}
  where $P$ is the  function defined by (\ref{P-defi}).
  Then  for any large  $m\ge 2$ there exists a smooth positive uniformly bounded function $s(x,t)$ such that
 \begin{align}\label{s-ineq}
     (\frac{\partial}{\partial t}-\Delta_{g(t)}-2P(\cdot, t))s(x,t)\geq 0,
 \end{align}
 where $P(\cdot, t)=\Phi^*_t P(\cdot)$.
 Moreover $s(x,0)$ satisfies
 \begin{align}\label{decay-s}
\lim_{\rho(x) \to \infty}f(x)^{m} s(x,0)=1,
 \end{align}
  where $\rho(x)=d(p,x)$ and $f$ is the defining function of  $(M^n, g)$.

\end{lem}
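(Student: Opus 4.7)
My plan is to reduce the parabolic super-solution problem to a static elliptic one via Ricci-flow pullback, and then to construct $s_{0}$ as the sum of a principal piece $R^{m}$ (handling the asymptotic region) and a cap-corrector $e^{-f}$ (exploiting the strict pinching on compact sets). Since $g(t)=\Phi_{t}^{*}g$ and $\Phi_{t}$ is generated by $-X$, the relations $\partial_{t}(\Phi_{t}^{*}u)=-\Phi_{t}^{*}(Xu)$ and $\Delta_{g(t)}(\Phi_{t}^{*}u)=\Phi_{t}^{*}(\Delta u)$ give, for $s(x,t):=\Phi_{t}^{*}s_{0}(x)$,
\[
\bigl(\partial_{t}-\Delta_{g(t)}-2P(\cdot,t)\bigr)s=-\Phi_{t}^{*}\bigl(Ls_{0}+2Ps_{0}\bigr),\qquad L:=\Delta+X,
\]
so it will suffice to build a smooth, positive, (automatically uniformly) bounded $s_{0}$ on $M$ with $Ls_{0}+2Ps_{0}\le 0$ and $\lim_{\rho\to\infty}f^{m}s_{0}=1$.

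I take $s_{0}=u_{1}+u_{2}$ with $u_{1}:=\bigl(\tfrac{2}{n-1}\bigr)^{m}R^{m}$ and $u_{2}:=c_{2}e^{-f}$, choosing $c_{2}>0$ at the end. The normalization of $u_{1}$ makes $f^{m}u_{1}\to 1$ (by Proposition~\ref{Improve-fR}, $fR\to(n-1)/2$), while $f^{m}u_{2}\to 0$, so the asymptotic condition (\ref{decay-s}) is automatic. Using the soliton identity $\Delta R+\langle X,\nabla R\rangle+2|\operatorname{Ric}|^{2}=0$ I get $LR=-2|\operatorname{Ric}|^{2}$, hence
\[
\frac{Lu_{1}+2Pu_{1}}{u_{1}}=2P-\frac{2m|\operatorname{Ric}|^{2}}{R}+\frac{m(m-1)|\nabla R|^{2}}{R^{2}},
\]
while a direct computation from $\Delta f=R$ and $|\nabla f|^{2}=1-R$ gives $Le^{-f}=-Re^{-f}$, so $Lu_{2}+2Pu_{2}=c_{2}(2P-R)e^{-f}$ is strictly negative by the strict pinching $2P<R$.

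The central step is to check that $Lu_{1}+2Pu_{1}\le 0$ outside a compact set. Inserting the refined estimates from Section~2, namely $fR=(n-1)/2+O(r^{-1/2-\delta})$, $|\operatorname{Ric}|^{2}=R^{2}/(n-1)+O(r^{-5/2})$ and $|\nabla R|=O(r^{-2})$, together with the pinching $2P\le\tfrac{m}{m+1}R$, I expect
\[
\frac{Lu_{1}+2Pu_{1}}{u_{1}}\le mR\Bigl[\tfrac{1}{m+1}-\tfrac{2}{n-1}\Bigr]+O(r^{-3/2})\qquad\text{as }r\to\infty.
\]
For $m$ large enough that $m+1>(n-1)/2$, the bracket is a strictly negative constant, and the leading term $\sim R\sim r^{-1}$ dominates the $O(r^{-3/2})$ error; so $Lu_{1}+2Pu_{1}\le 0$ outside some compact $K_{0}=\{f\le f_{0}\}$. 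On $K_{0}$, continuity gives $C_{m}:=\sup_{K_{0}}(Lu_{1}+2Pu_{1})<\infty$ and strict pinching on a compact set gives $\delta_{0}:=\inf_{K_{0}}(R-2P)>0$; then taking $c_{2}\ge C_{m}e^{f_{0}}/\delta_{0}$ forces $Ls_{0}+2Ps_{0}\le 0$ on $K_{0}$ as well (outside $K_{0}$ both summands are already nonpositive). Hence $s_{0}$ is smooth, positive and bounded (since $R$ is bounded above and $f\ge 4$), and $s(x,t)=\Phi_{t}^{*}s_{0}(x)$ is the desired super-solution. The main difficulty is the asymptotic sign computation for $u_{1}$: the subleading contributions from $m(m-1)|\nabla R|^{2}/R^{2}$ and from the error in $2m|\operatorname{Ric}|^{2}/R$ must decay strictly faster than $R$ itself, so that the negative leading term produced by the sharpened pinching can dominate—this is precisely what the refined Section~2 estimates deliver, and is the reason they were proved at the order they were.
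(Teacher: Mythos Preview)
Your argument is correct, but the paper takes a slightly different and more elementary route. You use $u_{1}=\bigl(\tfrac{2}{n-1}\bigr)^{m}R^{m}$ as the principal asymptotic piece, which forces you to control $|\operatorname{Ric}|^{2}/R$ and $|\nabla R|^{2}/R^{2}$ via the asymptotic curvature estimates of Section~2. The paper instead uses $f^{-m}$ as the principal piece: since $\Delta f=R$ and $|\nabla f|^{2}=1-R$, the computation of $L(f^{-m})+2Pf^{-m}$ reduces to $-mf^{-m-1}\bigl(1-(m+1)f^{-1}|\nabla f|^{2}-\tfrac{1}{m+1}fR\bigr)$, and making this nonpositive outside a compact set needs only $f\to\infty$ and the crude bound $fR\le\tfrac{n-1}{2}+o(1)$. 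Both approaches then cap with a large multiple of $e^{-f}$ exactly as you do. Functionally the two constructions are equivalent because $fR\to(n-1)/2$, but the paper's version is self-contained with respect to the basic soliton identities, whereas yours leans on the Section~2 derivative estimates. Your closing remark that those estimates ``were proved at the order they were'' for this lemma is therefore not accurate: the paper does not invoke them here; they are used instead in the construction of the approximate Killing vector fields.
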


 \begin{proof} We reduce to find a smooth positive uniformly bounded function $s$  on $M$ such that
 \begin{align}\label{elliptic-s}
     \Delta s+D_X s+2P s\leq 0.
 \end{align}
 Then  $s(x,t)=\Phi^*_t(s)$  will satisfy $(\ref{s-ineq})$, where  $\Phi^*_t$ is the transformations group generated by $-X$.

  By a direct  calculation,
 \begin{align}\label{ef-ineq}
        & \Delta e^{-f}+ D_X e^{-f}+ 2P e^{-f}\notag\\
        &=-\Delta f e^{-f}+|\nabla f|^2 e^{-f}-|\nabla f|^2 e^{-f}+2P e^{-f}\notag\\
         &=-(R-2P) e^{-f}\leq -\frac{1}{m+1}Re^{-f}.
 \end{align}
Then
 \begin{align}\label{fm-ineq}
        & \Delta f^{-m}+ D_X f^{-m}+ 2P f^{-m}\notag\\
        &= -mf^{-m-1}\Delta f+ m(m+1)f^{-m-2}|\nabla f|^2\notag\\
        &-mf^{-m-1}|\nabla f|^2+2P f^{-m}\notag\\
         &\leq-mf^{-m-1}(1-(m+1)|\nabla f|^2f^{-1}-\frac{1}{m+1}Rf).
    \end{align}
 Note that $f\geq 4$ and by (\ref{Brendle-fR}) we may choose $m$ large enough, such that $\frac{1}{m+1}\frac{n}{2}<\frac{1}{4} $.  Thus there exists  a compact set $K_1$ such that $\frac{1}{m+1}Rf <\frac{1}{4}$ outside of $K_1$.
  Moreover,  we can choose another compact set $K_2$ such that $(m+1)f^{-1}<\frac{1}{4}$ outside of $K_2$. Let $K'=K_1\cup K_2$.  Hence,  by $(\ref{fm-ineq})$ we get
 \begin{align}\label{fm-outside}
     \Delta f^{-m}+ D_X f^{-m}+ 2P f^{-m}\leq 0, ~M\setminus K'.
 \end{align}

 By the  compactness of $K'$, it is easy to see that on $K'$ it holds
 \begin{align}\label{fm-bound-K}
     |mf^{-m-1}(1-(m+1)|\nabla f|^2f^{-1}-\frac{1}{m+1}Rf)|\leq C_1,
 \end{align}
 and
 \begin{align}\label{ef-bound-K}
     \frac{1}{m+1}Re^{-f}>C_2,
 \end{align}
 where $C_1$ and $C_2$ are two constants.
 Let
 \begin{align}\label{s-def}
     s(\cdot)=\frac{C_1+1}{C_2}e^{-f}+f^{-m}.
 \end{align}
 Then by (\ref{ef-ineq})  and (\ref{fm-outside}),
  we get
   $$ \Delta s+D_X s+2P s\leq 0, ~ M\setminus K'.$$
    Moreover,  by $(\ref{ef-ineq})$, $(\ref{fm-ineq})$, $(\ref{fm-bound-K})$ and $(\ref{ef-bound-K})$,  we also obtain
    $$ \Delta s+D_X s+2P s\leq 0, ~ K'. $$
    Hence, the function $s(x)$ satisfies (\ref{elliptic-s}).
    Moreover, by $(\ref{s-def})$, $s(x)$ satisfies (\ref{decay-s}).
    The lemma is proved.

\end{proof}

Let $G(x, t; y, s)$ be the heat kernel in (\ref{heat-kernel}) for  the soliton Ricci flow $g(t)$ and  the function $P(\cdot, t)=\Phi^*_t P(\cdot)$ \footnote{We may assume that the function  $P(\cdot)$ is smooth, see Section 6 below.} in Lemma \ref{supersolution}.
For any fixed $D>0$,  we set
 \begin{align}\label{g-function}u_D(x,t)=\int_{B_0(p,D)}G(x,t; y,0)dv_{g(0)}y,
 \end{align}
  where  $p$ is  the  maximal point of $R$ and  $B_0(p,D)$ denotes the $D$-geodesic ball  centered at $p$  with respect to  the Ricci soliton $g$.  We  have the following  estimate for  $u_D(x,t)$.

 \begin{lem}\label{HK-vanish-local} Let $(M,g(t))$ be the soliton  Ricci flow  of $g$  in Lemma \ref{supersolution}.   Suppose that   $(\ref{pinching-condition})$ holds for $g$.
  Then
 \begin{align}\label{limit-u}
 \sup_{B_t(p,D)}u_D(\cdot,t)\rightarrow 0, ~{\rm as}~t\rightarrow \infty,
 \end{align}
 where $B_t(p,D)$ denotes the $D$-geodesic ball of $g(t)$, which exhausts $M$ as $t\to\infty$.

\end{lem}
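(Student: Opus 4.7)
The plan is to pull $u_D$ back to the fixed soliton metric $g$ and combine the supersolution of Lemma~\ref{supersolution} with an $L^{1}$-monotonicity coming from the pinching hypothesis $2P<R$. Since $\Phi_t^{-1}:(M,g)\to (M,g(t))$ is an isometry fixing the tip $p$ (where $X=0$), set
\begin{equation*}
w(y,t):=u_D(\Phi_t^{-1}(y),t).
\end{equation*}
A direct computation, using that $X$ is invariant under its own flow and $P(\cdot,t)=P\circ\Phi_t$, shows that $w$ solves the autonomous parabolic equation
\begin{equation*}
\partial_t w=\Delta_g w+Xw+2Pw,\qquad w(\cdot,0)=\chi_{B_0(p,D)},
\end{equation*}
on the fixed soliton $(M,g)$. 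Because $\Phi_t^{-1}$ is a bijection $B_0(p,D)\to B_t(p,D)$, one has $\sup_{B_t(p,D)} u_D(\cdot,t)=\sup_{B_0(p,D)} w(\cdot,t)$, so it suffices to prove $\sup_{B_0(p,D)} w(\cdot,t)\to 0$.

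The first ingredient is the supersolution bound. Choose $m$ large enough that the function $s$ of Lemma~\ref{supersolution} is integrable on $(M,g)$ (possible because $s(y)\sim f(y)^{-m}$ and the cylindrical asymptotics force only polynomial volume growth, so any $m>(n+1)/2$ will do). Since $s>0$ is uniformly bounded and satisfies $\Delta s+Xs+2Ps\le 0$ strictly, and since $|X|\le 1$ and $P$ are bounded coefficients, the maximum principle applied to $C_D s-w$ with $C_D:=1/\min_{\overline{B_0(p,D)}} s$ gives
\begin{equation*}
0\leq w(y,t)\leq C_D\, s(y),\qquad \forall\,(y,t)\in M\times[0,\infty).
\end{equation*}
In particular $w$ decays at infinity uniformly in $t$ and $w(\cdot,t)$ is dominated by the $L^1$ function $C_D s$.

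The second ingredient is the $L^1$-monotonicity. Integration by parts (justified by the decay of $w$ from the supersolution bound), together with $\Delta f=R$, yields
\begin{equation*}
\frac{d}{dt}\int_M w\,dv_g=\int_M(\Delta w+Xw+2Pw)\,dv_g=\int_M(2P-R)w\,dv_g\leq 0,
\end{equation*}
so $I(t):=\int_M w\,dv_g$ is nonincreasing, and hence converges to some $L\geq 0$ as $t\to\infty$.

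The finish is a contradiction argument via a time-translated limit. Suppose the conclusion fails: then there exist $\varepsilon>0$, $t_k\to\infty$, and $y_k\in\overline{B_0(p,D)}$ with $w(y_k,t_k)\geq\varepsilon$. By compactness $y_k\to y_\infty\in\overline{B_0(p,D)}$. Interior Schauder estimates for the autonomous parabolic equation together with the uniform $L^\infty$ bound $w\leq C_D s$ give $C^{k}$ bounds for $w(\cdot,t_k+\sigma)$ on compact space-time sets; after passing to a subsequence, $w(\cdot,t_k+\sigma)\to w_\infty(\cdot,\sigma)$ locally smoothly, where $w_\infty$ solves the same equation, $0\le w_\infty\leq C_D s$, and $w_\infty(y_\infty,0)\geq\varepsilon$. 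Since $C_D s$ is an integrable dominator, dominated convergence gives
\begin{equation*}
\int_M w_\infty(\cdot,\sigma)\,dv_g=\lim_{k\to\infty} I(t_k+\sigma)=L\quad\text{for every }\sigma,
\end{equation*}
so $\frac{d}{d\sigma}\int_M w_\infty\,dv_g=\int_M(2P-R)w_\infty\,dv_g=0$. The strict pinching $2P-R<0$ together with $w_\infty\geq 0$ forces $w_\infty\equiv 0$, contradicting $w_\infty(y_\infty,0)\geq\varepsilon$. The main obstacle in this plan is ensuring the integrability of the supersolution $s$ so that dominated convergence applies in the mass argument; this is exactly why we take $m$ large in Lemma~\ref{supersolution}, which is permitted by the construction of $s$ there.
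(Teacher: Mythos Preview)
Your proof is correct and rests on the same two pillars as the paper's: the supersolution barrier $s$ from Lemma~\ref{supersolution}, and the $L^1$-monotonicity coming from $2P-R<0$. In fact your mass $I(t)=\int_M w\,dv_g$ coincides, after unwinding the change of variables $x=\Phi_t^{-1}(y)$, with the paper's quantity $G_D(t)=\int_{B_0(p,D)}\int_M G(x,t;y,0)\,dv_{g(t)}x\,dv_{g(0)}y$. The differences are in packaging and in how the contradiction is reached. You pull everything back to the fixed soliton metric so that the equation becomes autonomous, and then run a standard compactness--limit argument: extract $w_\infty$, observe its mass is constant, and conclude $w_\infty\equiv 0$ from the strict sign of $2P-R$. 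The paper instead stays in the evolving metric and argues quantitatively: from the assumed lower bound $u_D(x_i,t_i)>\epsilon$ and the gradient estimate it gets $\partial_t G_D\le -\delta_1\delta_2$ on a sequence of disjoint unit-length time intervals, forcing $G_D(0)=+\infty$. Your route is cleaner but requires the extra input that $s\in L^1(M,g)$ (hence $m>(n+1)/2$) so that dominated convergence applies; the paper avoids this by using the Gaussian upper bound \eqref{HK-upper-bound} on the heat kernel to justify the integration by parts and the finiteness of $G_D$. Both arguments use that $\Phi_t$ fixes the point $p$ where $X=\nabla f$ vanishes, so that $B_t(p,D)=\Phi_t^{-1}(B_0(p,D))$.
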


 \begin{proof}
   By the definition of $G$,  we have
 \[s(x,0)=\int_MG(x,0;y,0)s(y,0)d_0y.\]
 Since  $s(x,t)$ is a super-solution of $(\ref{H-equation})$ by  Lemma \ref{supersolution}, by the  maximum principle
  we see
 \begin{align}\label{s-bound}
     s(x,t)\geq\int_MG(x,t;y,0)s(y,0)d_0y.
 \end{align}
 Note that there is some $c>0$ such that
  $$s(y,0)\geq c, ~{\rm in}~ B_0(p,D).$$
  Thus we obtain
 \begin{align}
 &c\int_{B_0(p,D)}G(x,t;y,0) dv_{g(0)}y \notag\\
 &\leq c\int_MG(x,t;y,0) dv_{g(0)}y \leq \int_MG(x,t;y,0)s(y,0)  dv_{g(0)}y\notag\\
 &\leq s(x,t)\leq C.\notag
 \end{align}
 It follows
 \begin{align}\label{u-bounded}
 u_D(x,t)\leq c^{-1}C=C_1.
 \end{align}
  Since  $u_D(x,t)$ satisfies the equation $(\ref{H-equation})$,   by a standard parabolic estimate,  we obtain
\begin{align}\label{gradient-estimate}
  |\partial_tu_D(x,t)|+|\nabla u_D(x,t)|\leq C_2.
  \end{align}

 Now we use the contradiction argument as in \cite[Section 7]{Yi2} to prove (\ref{limit-u}). Suppose that
 there exist $\epsilon>0$ and  sequences of $t_i\rightarrow \infty$ and $x_i\in B_{t_i}(p,D)$ such that $u_D(x_i,t_i)>\epsilon$. Without loss of generality we may assume that $t_{i+1}\geq t_i+1$. Thus by (\ref{gradient-estimate}),  we have
 \begin{align}\label{uD-L1-bound}
    & \int_{B_t(p,D)}\int_{B_0(p,D)}G(x.t;y.0) dv_{g(0)}y dv_{g(t)}x \notag\\
    &=\int_{B_t(p,D)}u_D(x,t) dv_{g(t)}x
    \geq \delta_1>0, ~\forall ~t\in[t_i,t_{i}+\delta_1],
 \end{align}
 where $\delta_1$ is a small constant.

 By the  Gaussian upper bound of $G$ in  $(\ref{HK-upper-bound})$,  we use  the divergence theorem to see
 $$\int_M\Delta_{x,t}G(x,t;y,0) dv_{g(t)}x =0.$$
 It follows
  \begin{align}\label{dt-uD-L1}
         &\partial_t\int_MG(x,t;y,0)   dv_{g(t)}x\notag\\
         & =\int_M [\partial_tG(x,t;y,0)-R(x,t)G(x,t;y,0)]  dv_{g(t)}x \notag\\
         &=\int_M [2P(x,t)-R(x,t)] G(x,t;y,0)  dv_{g(t)}x,
    \end{align}
 where the function $P(\cdot, t)=\Phi^*_t P(\cdot)$.
We notice that  by  (\ref{pinching-condition})  there is a $\delta_2>0$ such that
 \begin{align}\label{2P-R}
     \sup_{B_t(p,D)} [2P(\cdot, t)- R(\cdot, t)]=\sup_{B_0(p,D)} [2P(\cdot)- R(\cdot)] \leq -\delta_2.
 \end{align}

Let
$$G_D(t)=\int_{B_0(p,D)}\int_MG(x,t;y,0)  dv_{g(t)}x dv_{g(0)}y.$$
Then  by  (\ref{dt-uD-L1}),  we get
 \begin{align}\label{dt-GD}
     &\partial_t G_D(t)\notag\\
     &=\int_{B_0(p,D)}\int_M [2P(x,t)-R(x,t)] G(x,t;y,0)   dv_{g(t)}x dv_{g(0)}y <0.
 \end{align}
 More precisely,  by $(\ref{uD-L1-bound})$ and $(\ref{2P-R})$
 we obtain
  $$ \partial_t G_D(t)\leq-\delta_1\delta_2,~ t\in[t_i,t_{i}+\delta_1].$$
   Since  $G$ is everywhere positive and  so as  $G_D(t)>0$,  we  derive
 \begin{align}-G_D(0) &\leq \lim_{t\rightarrow \infty}G_D(t)-G_D(0)\notag\\
& \leq -\sum_{i=1}^\infty\int_{t_i}^{t_i+\delta_1}\delta_1\delta_2dt=-
 \sum_{i=1}^\infty\delta_1^2\delta_2.\notag
 \end{align}
The above relation  implies  that $G_D(0)=\infty$, which is impossible!.  Therefore  (\ref{limit-u}) is true and  the lemma is proved.

\end{proof}

Inspired by  a   pinching estimate  of  Anderson-Chow  for the solution of  (\ref{para-L})  associated to the $3$-dimensional  Ricci flow  \cite{AC},   we have the following  pinching estimate   for the solution of  (\ref{para-L})  in higher dimensions.

 \begin{lem}\label{h-H-pinching} Let $(M^n, g(t))$ $(t\in (0,T])$  be an $n\geq 3$-dimension Ricci flow.   Let  $H$ be  a positive solution   of $(\ref{H-equation})$ associated to the flow $g(t)$ with the function $P(\cdot, t)$ given by
 \begin{align}\label{pinching-condition-t}
  P(\cdot, t)= \sup_h\frac{{\rm Rm}_{g(t)}(\cdot)(h,h)}{|h|^2}=\sup_h\frac{{\rm R}_{t, ijkl}(\cdot)h^{ik}h^{jl}}{h_{ik}h^{ik}}, ~\forall ~t\in ~(0, T].
  \end{align}
 Then any   solution  $h$ of $(\ref{para-L})$
satisfies  the following   pinching estimate,
 \begin{align}\label{h-H-pinching-ineq}
     \partial_t(\frac{|h|^2}{H^2})\leq\Delta(\frac{|h|^2}{H^2})+\frac{2}{H}\nabla H\cdot\nabla(\frac{|h|^2}{H^2}),
 \end{align}
 where $\Delta=\Delta_{g(t)}$ and $|\cdot|=|\cdot|_{g(t)}$.

\end{lem}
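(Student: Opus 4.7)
The plan is a direct Bochner--type computation, following the strategy of Anderson--Chow: compute $(\partial_t-\Delta)|h|^2$ and $(\partial_t-\Delta)H^2$ separately, then form the ratio and exploit the defining property of $P$ together with a Kato/Cauchy--Schwarz inequality for the extra gradient terms.

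First I would compute $(\partial_t-\Delta)|h|^2$. Using $\partial_t g^{ij}=2R^{ij}$ under Ricci flow and the parabolic Lichnerowicz equation $\partial_t h_{ij}=\Delta h_{ij}+2R_{ikjl}h^{kl}-R_{ik}h^k{}_j-R_{jk}h^k{}_i$, the terms coming from $\partial_t g^{ij}$ cancel against the Ricci terms in $\Delta_L h$, and one finds
\[
(\partial_t-\Delta)|h|^2 \;=\; -2|\nabla h|^2+4R_{ikjl}h^{ij}h^{kl}.
\]
By the definition \eqref{pinching-condition-t} of $P(\cdot,t)$ the curvature term satisfies $R_{ikjl}h^{ij}h^{kl}\le P|h|^2$, hence
\[
(\partial_t-\Delta)|h|^2 \;\le\; -2|\nabla h|^2+4P|h|^2.
\]
Next, a direct computation from $\partial_t H=\Delta H+2PH$ gives
\[
(\partial_t-\Delta)H^2 \;=\; 4PH^2-2|\nabla H|^2.
\]

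Then I would apply the standard quotient rule: for $u=A/B$ with $B>0$,
\[
(\partial_t-\Delta)u \;=\; \frac{1}{B}(\partial_t-\Delta)A-\frac{u}{B}(\partial_t-\Delta)B+\frac{2}{B}\nabla B\cdot\nabla u.
\]
Taking $A=|h|^2$, $B=H^2$ and using the two identities above, the terms $4P|h|^2/H^2$ in $A$ and $-4Pu$ in $B$ cancel, leaving
\[
(\partial_t-\Delta)u-\frac{4}{H}\nabla H\cdot\nabla u \;\le\; \frac{-2|\nabla h|^2}{H^2}+\frac{2|h|^2|\nabla H|^2}{H^4}.
\]
To arrive at the form stated in the lemma I need to replace $\tfrac{4}{H}\nabla H\cdot\nabla u$ by $\tfrac{2}{H}\nabla H\cdot\nabla u$. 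Expanding $\nabla u=\nabla|h|^2/H^2-2|h|^2\nabla H/H^3$ and multiplying through by $H^2$, the remaining task is to show
\[
-H^2|\nabla h|^2-|h|^2|\nabla H|^2+H\,\nabla H\cdot\nabla|h|^2 \;\le\; 0.
\]

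This last inequality is the main (though routine) analytic point. I would combine the Kato estimate $|\nabla|h|^2|\le 2|h|\,|\nabla h|$ with Cauchy--Schwarz to bound
\[
H\,\nabla H\cdot\nabla|h|^2 \;\le\; 2H|\nabla H|\,|h|\,|\nabla h| \;\le\; H^2|\nabla h|^2+|h|^2|\nabla H|^2,
\]
which immediately gives the desired sign. The conceptual heart of the argument is thus the definition of $P$ (so the curvature terms cancel between $A$ and $B$), and the only technical step is this AM--GM/Cauchy--Schwarz juggling on the gradient terms; I do not anticipate any serious obstacle beyond keeping indices straight in the first Bochner computation.
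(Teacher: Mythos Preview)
Your proposal is correct and follows essentially the same route as the paper. The only cosmetic difference is that the paper organizes the computation so that the gradient remainder appears directly as the perfect square $-2|H\nabla h-(\nabla H)h|^2/H^4$, whereas you reach the same non-positivity via Cauchy--Schwarz on $H\,\nabla H\cdot\nabla|h|^2$; expanding the square shows these are literally the same inequality.
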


 \begin{proof} By a direct computation with help of  (\ref{para-L})  and $(\ref{H-equation})$,  we have the following identities
 \[\begin{aligned}
     &\partial_t(\frac{|h|^2}{H^2})\\
     &=-\frac{2}{H^2}\frac{\partial}{\partial t}g_{ij}h^i_{k}h^{kj}+\frac{2}{H^2}(\frac{\partial}{\partial t}h)\cdot h-2\frac{|h|^2}{H^3}\frac{\partial}{\partial t}H\\
     &=\frac{4 \operatorname{Ric}(h,h)}{H^2}+\frac{2}{H^2}\left(\Delta h_{i j}+2 R_{i k j l} h^{k l}-R_{i k} h^k_{ j}-R_{j k} h^k_{ i}\right) h^{i j}-2\frac{|h|^2}{H^3}(\Delta H+2PH),
 \end{aligned}
 \]
 and
 \[\begin{aligned}
     &\Delta (\frac{|h|^2}{H^2})\\
     &=2\frac{\Delta h\cdot h}{H^2}+2\frac{|\nabla h|^2}{H^2}-8\frac{h}{H^3}\langle\nabla h,\nabla H\rangle-2\frac{|h|^2\Delta H}{H^3}+6\frac{|h|^2|\nabla H|^2}{H^4}.
 \end{aligned}
 \]
 Note
 \[\frac{1}{H}\nabla H\cdot \nabla(\frac{|h|^2}{H^2})=2\frac{h}{H^3}\langle\nabla h,\nabla H\rangle-2\frac{|h|^2|\nabla H|^2}{H^4}.
 \]
 Then combining  the above three relations,  we get
 \[\begin{aligned}
   &  \partial_t(\frac{|h|^2}{H^2})\\
   &=\Delta(\frac{|h|^2}{H^2})+\frac{2}{H}\nabla H\cdot\nabla(\frac{|h|^2}{H^2})+\frac{4}{H^2}(R_{ijkl}h^{ik}h^{jl}- P|h|^2)\\
     &-2\frac{|H\nabla_ih_{jk}-(\nabla_iH)h_{jk}|^2}{H^4}\\
     &\leq \Delta(\frac{|h|^2}{H^2})+  \frac{2}{H}\nabla H\cdot\nabla(\frac{|h|^2}{H^2}).
 \end{aligned}\]
The last inequality follows from the condition (\ref{pinching-condition-t}).

\end{proof}

 \begin{rem}\label{Anderson-Chow} 1)  In \cite{AC},  Anderson-Chow proved $(\ref{h-H-pinching-ineq})$ for  a $3$-dimensional  Ricci flow when
$H(\cdot, t)=R(g(t))$. In a recent paper \cite{Yi2},  Lai observed  that Anderson-Chow's proof  works   for any solution $H$  of
 following heat-type equation associated to  the $3$-dimensional  Ricci flow,
 \begin{align}\label{3d-H-equation}
     \frac{\partial}{\partial t}H=\Delta_{g(t)} H+2\frac{|{\rm Ric}(g(t))|^2}{R(g(t))}H.
 \end{align}
 Namely, $(\ref{h-H-pinching-ineq})$  holds  for any solution $H$  of  $(\ref{3d-H-equation})$ in the $3$-dimensional case.
In fact,   $R(g(t))$ satisfies $(\ref{3d-H-equation})$ for any dimension.

2)  It  can be checked  that $P(\cdot, t)\le \frac{|{\rm Ric}(g(t))|^2}{R(g(t))}$ in  the $3$-dimensional case (cf.  \cite{AC}).  Then the proof of  $(\ref{h-H-pinching-ineq})$  in
 Lemma \ref{h-H-pinching}  also works for the solution $H$ of $(\ref{3d-H-equation})$ as studied  by Lai above.

\end{rem}

By the above three lemmas, we are able to get an asymptotic behavior estimate for the solution of  (\ref{para-L}).

 \begin{prop}\label{h-vanish} Let $(M^n, ~ g(t))$ be the soliton  Ricci flow  of    $g$  with   dimension  $n\geq 4$ in Lemma \ref{supersolution}. Let  $h(t)$ $(t\in[0,\infty))$ be a solution of (\ref{para-L}) with an initial value $h(x, 0)$ satisfying
 \begin{align}\label{h-condition}|h|(x,0)\leq O(\rho^{-l}),
 \end{align}
 for some  $l>m$,  where   $m$ is   the large number chosen as in  $(\ref{m-strong})$.
  Then
  \begin{align}\label{h-limit}
  \lim_t\sup_{B_t(p,D)}|h|(\cdot,t)=0.
  \end{align}
\end{prop}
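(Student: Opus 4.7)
The plan is to dominate $|h|(x,t)$ pointwise by a carefully chosen positive solution of $(\ref{H-equation})$ whose initial value majorizes $|h|(\cdot,0)$ and splits cleanly into an ``$\epsilon$-small tail'' (controlled by $s(\cdot,0)$) plus a ``compactly supported bump'' (controlled by Lemma \ref{HK-vanish-local}). The pinching estimate of Lemma \ref{h-H-pinching} will then transfer the bound to all positive times, after which the supersolution from Lemma \ref{supersolution} and the dispersion estimate from Lemma \ref{HK-vanish-local} will make each piece small on $B_t(p,D)$ as $t\to\infty$.

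To begin, I fix $\epsilon>0$. By Lemma \ref{linear-growth}, $f(y)\sim \rho(y)$ outside a compact set, so the asymptotic $(\ref{decay-s})$ gives $s(y,0)\geq c\rho(y)^{-m}$ for large $\rho(y)$; together with $(\ref{h-condition})$ and $l>m$,
\[
\frac{|h|(y,0)}{s(y,0)}=O(\rho(y)^{-(l-m)})\to 0\quad \text{as } \rho(y)\to\infty.
\]
Hence I can choose $D_\epsilon>0$, a smooth cutoff $\eta_\epsilon$ with $0\leq\eta_\epsilon\leq 1$ supported in $B_0(p,D_\epsilon)$, and a constant $C_\epsilon>0$ such that $|h|(y,0)\leq \epsilon\, s(y,0)+C_\epsilon\,\eta_\epsilon(y)$ on $M$. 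I then define the comparison function
\[
H_\epsilon(x,t):=\epsilon\,\tilde H(x,t)+C_\epsilon\, v_\epsilon(x,t),
\]
where $\tilde H$ and $v_\epsilon$ are the positive solutions of $(\ref{H-equation})$ obtained by convolving the heat kernel $G$ from $(\ref{heat-kernel})$ against $s(\cdot,0)$ and $\eta_\epsilon$ respectively. Both are smooth and strictly positive for $t>0$, with $H_\epsilon(x,0)\geq |h|(x,0)$.

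Applying Lemma \ref{h-H-pinching} to the ratio $F:=|h|^2/H_\epsilon^2$, which initially satisfies $F(\cdot,0)\leq 1$ and obeys the parabolic inequality $(\ref{h-H-pinching-ineq})$, the maximum principle (together with a cutoff at spatial infinity) will give $|h|(x,t)\leq H_\epsilon(x,t)$ for all $t\geq 0$. To finish, I estimate each summand on $B_t(p,D)$. Since $s(\cdot,t)=\Phi_t^*(s)$ is a supersolution of $(\ref{H-equation})$ by Lemma \ref{supersolution} and $\tilde H(\cdot,0)=s(\cdot,0)$, the maximum principle yields $\tilde H(x,t)\leq s(x,t)\leq \sup_M s<\infty$. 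Since $\eta_\epsilon\leq \chi_{B_0(p,D_\epsilon)}$, one has $v_\epsilon(x,t)\leq u_{D_\epsilon}(x,t)$; the proof of Lemma \ref{HK-vanish-local} adapts verbatim (working with the fixed-radius test ball $B_t(p,D)$ of the proposition in place of $B_t(p,D_\epsilon)$) to yield $\sup_{B_t(p,D)}u_{D_\epsilon}(\cdot,t)\to 0$ as $t\to\infty$. Putting these together,
\[
\limsup_{t\to\infty}\sup_{B_t(p,D)}|h|(\cdot,t)\leq \epsilon\,\sup_M s,
\]
and letting $\epsilon\to 0$ completes the proof.

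The main obstacle will be justifying the maximum principle in the pinching step on the non-compact manifold: the ratio $F$ must have controlled behavior at spatial infinity on each slab $M\times[0,T]$. This requires a local-in-time $L^\infty$ bound on $|h|$, which should follow from standard parabolic estimates for the linear equation $(\ref{para-L})$ on the asymptotically cylindrical geometry together with the initial decay $(\ref{h-condition})$, and a quantitative lower bound for $H_\epsilon(\cdot,t)$ on compact sets, which is automatic from the Gaussian lower bound implicit in the construction of $G$ since $\tilde H>0$ everywhere for $t>0$.
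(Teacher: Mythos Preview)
Your argument is correct and follows essentially the same strategy as the paper: dominate $|h|$ by a heat-kernel solution of $(\ref{H-equation})$, then split that solution into a piece controlled by the supersolution $s$ (Lemma \ref{supersolution}) and a piece controlled by the dispersion estimate (Lemma \ref{HK-vanish-local}). The only organizational difference is that the paper applies Lemma \ref{h-H-pinching} once with the single comparison $H(x,t)=\int_M G(x,t;y,0)|h|(y,0)\,dv_{g(0)}y$ and then splits the \emph{integral} over $B_0(p,D_\epsilon)$ and its complement, whereas you build an $\epsilon$-dependent $H_\epsilon$ and split \emph{before} applying the pinching lemma; the two are equivalent by linearity of $(\ref{H-equation})$. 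Your remark that Lemma \ref{HK-vanish-local} must be invoked with the outer radius $D$ fixed and the inner radius $D_\epsilon$ possibly larger is a genuine clarification (the paper tacitly takes $D=D_\epsilon$), and it is immediate once $D_\epsilon\geq D$ since $B_t(p,D)\subset B_t(p,D_\epsilon)$. Your closing caveat about justifying the maximum principle for $|h|^2/H_\epsilon^2$ at spatial infinity is also well taken; the paper simply asserts ``by the maximum principle'' at the corresponding step.
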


 \begin{proof} Let $G(x, t; y, s)$ be the heat kernel in (\ref{g-function}). Set
 $$H(x,t)=\int_MG(x,t;y,0)|h|(y,0)d_{g(0)}y.$$
Then   $H(\cdot, t)$  is a solution of $(\ref{H-equation})$  with $P(\cdot, t)=\Phi^*_t P(\cdot)$  for any $t>0$.
 Thus by Lemma \ref{h-H-pinching},  (\ref{h-H-pinching-ineq}) holds for  $\frac{|h|^2}{H^2}$.
  By the maximum principle, we see that $|h|\leq H$, that is,
 \begin{align}\label{h-G-bound}
     |h|(x,t)\leq \int_MG(x,t;y,0)|h|(y,0) d_{g(0)}y.
 \end{align}
 On the other hand,  by  (\ref{h-condition})  and (\ref{decay-s}) in Lemma \ref{supersolution},   for any $\epsilon >0$ we can find some $D>0$ such that
 $$|h|(y,0)\leq \epsilon s(y,0), ~\forall ~y\in M\setminus B_0(p,D).$$
  We may assume $D>\frac{1}{\epsilon}$.  Hence,   by   $(\ref{h-G-bound})$ and $(\ref{s-bound})$ together with   Lemma \ref{HK-vanish-local}, we see that  for sufficiently large $t$ and $x\in B_t(p,D)$,
 \[\begin{aligned}
    & |h|(x,t)\\
    &\leq \int_{B_0(p,D)}G(x,t;y,0)|h|(y,0) d_{g(0)}y+\int_{M\setminus B_0(p,D)}G(x,t;y,0)|h|(y,0) d_{g(0)}y\\
     &\leq \epsilon+\epsilon\int_MG(x,t;y,0) s(y,0) d_{g(0)}y\\
     &\leq \epsilon+\epsilon s(x,t)\leq\epsilon(1+C),
 \end{aligned}\]
 where the function $ s(x,t)$ is  uniformly bounded by Lemma \ref{supersolution}.
Since $\epsilon$ can be taken arbitrary small,  we will get  (\ref{h-limit}).
 The proposition is proved.

\end{proof}

 \section{Proof of  main theorem and corollaries}

In this section,  we complete the proofs of   Theorem \ref{main-theorem},  Corollary  \ref{Curv-decay} and  Corollary \ref{asymtotic-behavior-rigidity}.

\begin{proof} [Proof of Theorem \ref{main-theorem}]
By Lemma  \ref{DZ-6.6},  it is easy to see
$$\lim_{x\to\infty}\frac{2P(x)}{R(x)}= \frac{2}{n-1}<1$$
when $n\ge 4.$
On the other hand, by the condition (\ref{pinching-condition}),   for any  compact set $K$  there exists a  small positive number $\mu$ such that
$$2P<(1-\mu)R\le\frac{m}{m+1}R, ~{\rm on }~K,$$
where $m$ can be chosen sufficiently large.
Thus the condition (\ref{m-strong}) holds on the  total space $(M^n, g)$ when $n\ge 4.$.

 We may assume that  the function $P(x)$ is a smooth.  In fact,    by a small perturbation of $P$,  there  is always a  smooth  function $P'$  on $M$
such that
$P\le P'$ and  $P'$ still satisfies (\ref{m-strong}). Then  we replace the function $P(\cdot, t)$ in $(\ref{H-equation})$   by
$P(\cdot, t)=\Phi^*_t P'(\cdot)$  for any $t>0$ so that all of
Lemma \ref{supersolution}, Lemma \ref{HK-vanish-local} and  Proposition \ref{h-vanish} are still true.

 By Theorem \ref{VF-decay}, we already know that
 there is a collection of approximate Killing VFs $Y_a$  ($a \in \{1,...,\frac{n(n-1)}{2}\}$)  such that
 \begin{equation*}
     |\mathscr{L}_{Y_a}g|\leq O(r^{-l}),
     \end{equation*}
 outside a compact set, where $l$ can be chosen so that $l>m$.
 Let $g(t)=\Phi^*_t(g)$ and $Y_a(t)=\Phi^*_t(Y_a)$.
Then $h_a(t)=\mathscr{L}_{Y_a(t)}g(t)=\Phi^*_t  h_a$  is a solution of  (\ref{para-L})  for any $t>0$.  This is because  $h_a=\mathscr{L}_{Y_a}g$ is a solution of (\ref{L-equ}).   Thus applying  Proposition \ref{h-vanish} to each solution  $h_a(t)$  of (\ref{L-equ}) with the
initial value $h_a$,   we see
 $$ \lim_t\sup_{B_t(p,D)}|h_a|(\cdot,t)=0.$$
 Note that $h_a(\cdot)=\Phi_{-t}^* h_a(t)$.
 Hence,
 $|h_a|\equiv 0$ on any compact $D$-geodesic ball  $B_0(p,D)\subset M$.  This implies that   $|h_a|\equiv 0$ on $M$.
Therefore we prove that each  $Y_a$ is a Killing VF on $(M, g)$.

By  \cite[Proposition 4.1]{Bre2},  $Y_a$ satisfies
 $$\Delta Y_a+\operatorname{Ric}(g)(Y_a, \cdot)=0.$$
Since  $Y_a$   also satisfies  by the construction of $Y_a$,
 $$\Delta Y_a+D_X Y_a=0, $$
 we get
 $$[Y_a,X]=\operatorname{Ric}(g)(Y_a, \cdot)-D_XY_a=0.$$
  It follows
    $$ D^2(\mathscr{L}_{Y_a}(f))=\mathscr{L}_{Y_a}(D^2f)=\frac{1}{2}\mathscr{L}_{Y_a}(\mathscr{L}_{X}g)=\frac{1}{2}\mathscr{L}_{X}(\mathscr{L}_{Y_a}g)=0.
 $$
 Hence,  the function $\mathscr{L}_{Y_a}(f)=\langle Y_a,X\rangle$ is constant.  Since $X$ vanishes at the point where $f$ attains its minimum,    the function $\langle Y_a,X\rangle$  must vanish identically.   This implies that  $(M,g)$ is rotationally symmetric and  Theorem \ref{main-theorem} is proved.

\end{proof}

The first part of Corollary \ref{Curv-decay} is a direct application of Theorem \ref{main-theorem} since $(M,g)$ has an asymptotically cylindrical property under the  linear decay condition of curvature  in  (\ref{Curv-decay}) by  \cite[Theorem 1.3]{DZ1}.  Then by Proposition \ref{P-condition},
the second part is also true.

\begin{rem}\label{re-coro-0.3}In $4d$,  the condition ${\rm Rm}_g(x)\ge 0$ $(\rho(x)\ge r_0)$   in  $(\ref{Curv-decay})$  can be weakened as   the non-negativity of sectional curvature
This is because   all level sets of $f$ are  diffeomorphic   to $\mathbb S^{3}$ and so  any  limit of  Ricci flow from a   blow-down sequence of Ricci flow  obtained in   \cite[Theorem 1.4]{DZ1} satisfies   Definition \ref {def1.2}-(ii)    by Hamilton's result  for  closed $3d$-Ricci flow with positive Ricci curvature \cite{Ha82} (also see \cite[Theorem 1.5]{DZ2}).
\end{rem}

 \begin{proof}[Proof of  Corollary \ref{asymtotic-behavior-rigidity}]
We need to check that  $(M,g)$ has an asymptotically cylindrical property under   the condition
 $(\ref{asymtotic-behavior})$.   In fact,  for a fixed  $p\in (\mathbb{R}^n,g_0)$,   we  can choose   an orthonormal basis  $\{e_1,e_2,...,e_n\}$  of $T_pM$ with respect to metric $g_0$ with $e_n=\frac{\nabla f}{|\nabla f|}$.  Then   it is easy to see
 \begin{align}\label{Rm-0n}
     \sup_{\partial B_p(r)}|\hat{R}_{ijkl}-R_{ijkl}|_{g_0}\leq O(r^{-1-\tau}), ~1\leq i,j,k,l\leq n-1,
 \end{align}
  \begin{align}\label{Rm-1n}
     \sup_{\partial B_p(r)}|\hat{R}_{ijkn}-R_{ijkn}|_{g_0}\leq O(r^{-\frac{3}{2}-\tau}), ~1\leq i,j,k\leq n-1,
 \end{align}
 and
  \begin{align}\label{Rm-2n}
     \sup_{\partial B_p(r)}|\hat{R}_{inkn}-R_{inkn}|_{g_0}\leq O(r^{-2-\tau}), 1\leq i,k\leq n-1.
 \end{align}
 Here  we use the sectional curvature decay of Bryant soliton and  (\ref{derivative-f}).
By the above threes relations,  we see that $(M,g)$ has positive curvature operator outside of a compact set $K$.  Moreover,  its scalar curvature has exactly linear decay,  i.e. there exists positive numbers $r_0$, $C_1$ and $C_2$ such that
 \begin{align}\label{exact-linear}\frac{C_1}{\rho(x)}\leq R(x)\leq\frac{C_2}{\rho(x)},   ~\forall~ \rho(x)\geq r_0.
 \end{align}
Also one can check that $(M,g)$ is $\kappa$-noncollapsed for some $\kappa>0$.

According to the proof of \cite[Theorem 1.4]{DZ1},  it suffices to prove that the Gauss sphere  $\Sigma_r$ of geodesic ball of $B_r(p)$ in $(M, g)$ is diffeomorphic to  the unit sphere in the  Euclidean space and
$${\rm Diam}(\Sigma_r, g)\le C\sqrt{r}$$
as long as $r$ is sufficiently large.  But these two facts can be guaranteed by $(\ref{asymtotic-behavior})$. Thus the corollary is proved.

\end{proof}

 \end{document}